\documentclass[oneside, a4paper,reqno]{amsart}
\usepackage{amsmath, amsthm, amscd, amssymb, latexsym, eucal}
\usepackage{pdfsync}
\usepackage{bbm}
\usepackage{bm}
\usepackage{stmaryrd}
\usepackage{mathrsfs}
\usepackage{hyperref}
\usepackage{tikz-cd}
\usepackage[all]{xy}

\addtolength{\textwidth}{3.5cm} \calclayout \makeatletter
\addtolength{\textheight}{3cm} \calclayout \makeatletter
\def\serieslogo@{} \def\@setcopyright{} \makeatother
\makeatletter
\renewcommand*\env@matrix[1][c]{\hskip -\arraycolsep
  \let\@ifnextchar\new@ifnextchar
  \array{*\c@MaxMatrixCols #1}}
\makeatother


\usepackage{multienum}

\usepackage[colorinlistoftodos]{todonotes}

\usepackage{hyperref}
\usepackage{color}
\usepackage[nospace,noadjust]{cite}

\hypersetup{colorlinks=true,
     breaklinks=true,
     linkcolor=blue,
     citecolor=red,    
     bookmarks=true,
     urlcolor= black,
     pageanchor=true
     }


 \pagestyle{headings}

\newtheorem{theorem}{Theorem}[section]
\newtheorem{lemma}[theorem]{Lemma}
\newtheorem{corollary}[theorem]{Corollary}
\newtheorem{proposition}[theorem]{Proposition}

\newtheorem{Theox}{Theorem}

\newtheorem{Corox}[Theox]{Corollary}

\theoremstyle{definition}
\newtheorem{definition}[theorem]{Definition}

\newtheorem{example}[theorem]{Example}

\theoremstyle{remark}
\newtheorem{remark}[theorem]{Remark}



\DeclareMathOperator{\Hom}{Hom}

\newcommand{\Colim}{\varinjlim}

\newcommand{\Mod}{\textnormal{Mod}}
\newcommand{\mods}{\textnormal{mod}}

\newcommand{\Ab}{\textbf{Ab}}

\numberwithin{equation}{section}
\def \-{\text{-}}
\def \k{\mathbbm{k}}
\def \Der{\textbf{D}}
\def \Com{\textbf{C}}

\def \fp{\text{fp}}

\def \Im{\text{Im}}

\def \ev{\text{ev}}
\def \coev{\text{coev}}

\begin{document}
\vspace{10pt}
\title{Hopf algebras are determined by their monoidal derived categories}
\author{Yuying Xu, Junhua Zheng}
\keywords{monoidal derived equivalences, Hopf algebras, monoidal t-structures, locally finite tensor Grothendieck categories}
\subjclass[2020]{Primary 16T05, 18G80, 18E10; Secondary 16G10, 18M05}
\thanks{Both authors have received support by Programs of China Scholarship Council (first auther: No. 202206190132, second author: No. 202307300022)}

\address{}
\address{Y. Y. Xu}
\address{School of Mathematics, Nanjing University, 210093 Nanjing, People's Republic of China}
\address{Institute of Algebra and Number Theory, University of Stuttgart, Pfaffenwaldring
57, 70569 Stuttgart, Germany}
\email{Yuying.Xu@mathematik.uni-stuttgart.de}
\address{}
\address{J. H. Zheng}
\address{Institute of Algebra and Number Theory, University of Stuttgart, Pfaffenwaldring
57, 70569 Stuttgart, Germany}
\email{Junhua.Zheng@mathematik.uni-stuttgart.de}

\maketitle

\begin{abstract}
We show that two finite-dimensional Hopf algebras are gauge equivalent if and only if their bounded derived categories are monoidal triangulated equivalent. More generally, a monoidal derived equivalence between locally finite tensor Grothendieck categories induces a monoidal abelian equivalence.
\end{abstract}

\setcounter{tocdepth}{2} \tableofcontents

\section{Introduction}

In representation theory there is a hierarchy of ways to compare module categories, from the very strong Morita equivalence via tilting to the very general derived equivalence.  When module categories or abelian categories in addition carry tensor structures, an analogous hierarchy seems to go from gauge equivalences  to \textit{monoidal derived equivalences} that are monoidal triangulated equivalences of bounded derived categories. Here gauge equivalences are special cases of Morita equivalences for Hopf algebras, which are Morita equivalences satisfying additional conditions on coalgebras' structures. However, for finite-dimensional Hopf algebras it turns out that, unexpectedly, monoidal derived equivalences induce gauge equivalences and thus are not more general than these special Morita equivalences.

For finite-dimensional Hopf algebras the main result of this article is the following reconstruction theorem: 

\begin{Corox} \emph{(Remark \ref{corollary A})}\label{Corollary A}
Let $H$ and $H'$ be finite-dimensional Hopf algebras. Then the following are equivalent:
\begin{enumerate}
\item $H$ and $H'$ are gauge equivalent;
\item $\mods\-H$ and $\mods\-H'$ are monoidal abelian equivalent;
\item $\Mod\-H$ and $\Mod\-H'$ are monoidal abelian equivalent;
\item The derived categories $\textnormal{\Der}^{b}(\mods\-H)$ and $\textnormal{\Der}^{b}(\mods\-H')$ are monoidal triangulated equivalent;
\item The derived categories $\textnormal{\Der}^{b}(\Mod\-H)$ and $\textnormal{\Der}^{b}(\Mod\-H')$ are monoidal triangulated equivalent.

\end{enumerate}

\end{Corox}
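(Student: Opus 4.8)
The five statements divide into a \emph{soft} layer $(1)\Leftrightarrow(2)\Leftrightarrow(3)$, $(2)\Rightarrow(4)$, $(3)\Rightarrow(5)$, and a \emph{hard} layer in which $(4)$ and $(5)$ feed back into the rest; the hard layer is powered by the paper's general theorem and is where essentially all the difficulty sits. I would first assemble the soft layer. For $(1)\Rightarrow(2)$: if $H'\cong H^{J}$ for a Drinfeld twist (gauge transformation) $J\in H\otimes H$, then right multiplication by $J$ on the common underlying space $M\otimes_{\k}N$ gives a natural $H$-linear isomorphism between the tensor product $\otimes^{J}$ of $\mods\-H^{J}$ and the tensor product $\otimes_{\k}$ of $\mods\-H$; the cocycle equation for $J$ is precisely the coherence identity turning this family of isomorphisms (over the identity functor) into a $\k$-linear monoidal equivalence $(\mods\-H^{J},\otimes^{J},\k)\xrightarrow{\sim}(\mods\-H,\otimes_{\k},\k)$, the associativity constraints being the trivial ones on both sides since we stay among genuine Hopf algebras. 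For $(2)\Leftrightarrow(3)$: finite-dimensionality of $H$ makes every module the filtered union of its finite-dimensional submodules, so $\Mod\-H=\Ind(\mods\-H)$ and the tensor product of $\Mod\-H$ is the (essentially unique) cocontinuous extension of $\otimes_{\k}$ along $\mods\-H\hookrightarrow\Mod\-H$; Ind-completion is a monoidal $2$-functor, so a $\k$-linear monoidal abelian equivalence of the finite categories extends uniquely to one of the big ones, while conversely a $\k$-linear monoidal abelian equivalence $\Mod\-H\simeq\Mod\-H'$ preserves the intrinsic, $\otimes_{\k}$-closed subcategory of finite-length (equivalently, finitely generated) objects and hence restricts, still monoidally. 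For $(2)\Rightarrow(4)$ and $(3)\Rightarrow(5)$: because $\k$ is a field, $-\otimes_{\k}-$ with the diagonal $H$-action is exact, so $\Der^{b}$ inherits its monoidal structure by degreewise application of $\otimes_{\k}$ with no derivation needed, and an exact $\k$-linear monoidal equivalence of abelian categories then induces a monoidal triangulated equivalence of bounded derived categories.

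For the hard layer, $(5)\Rightarrow(3)$ is exactly the main theorem of the paper applied to $\Mod\-H$ and $\Mod\-H'$: each is a locally finite tensor Grothendieck category --- Grothendieck; closed monoidal via $\otimes_{\k}$ with unit $\k$ and rigid finite-dimensional objects; locally finite and equal to the Ind-completion of its finite tensor subcategory of finite-length objects, because $H$ and $H'$ are finite-dimensional --- so a monoidal triangulated equivalence $\Der^{b}(\Mod\-H)\simeq\Der^{b}(\Mod\-H')$ descends to a monoidal abelian equivalence $\Mod\-H\simeq\Mod\-H'$. Combined with the restriction argument this gives $(5)\Rightarrow(3)\Rightarrow(2)$, and $(2)\Rightarrow(1)$ is the Tannaka--Krein characterisation of gauge equivalence for finite-dimensional Hopf algebras: composing a $\k$-linear monoidal equivalence $\mods\-H\simeq\mods\-H'$ with the two forgetful fiber functors exhibits $H$ and $H'$ as the endomorphism Hopf algebras of two fiber functors on a single finite tensor category, and Tannakian reconstruction then identifies $H'$ with a twist $H^{J}$ of $H$. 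It remains to fold $(4)$ into the circle; for this I would promote a monoidal triangulated equivalence $\Der^{b}(\mods\-H)\simeq\Der^{b}(\mods\-H')$ to one of $\Der^{b}(\Mod\-H)\simeq\Der^{b}(\Mod\-H')$, using that $\Der^{b}(\mods\-H)$ is the intrinsically defined full subcategory of objects with bounded finite-length cohomology inside $\Der^{b}(\Mod\-H)$ and that $\Der^{b}(\Mod\-H)$ is recovered from it by the appropriate completion, and then apply $(5)\Rightarrow(3)\Rightarrow(2)$; if the paper records a finite-dimensional refinement of the main theorem, this reduction becomes immediate.

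Granting the main theorem, the two points that still require genuine care are the Tannakian step $(2)\Rightarrow(1)$ --- where one must verify that a monoidal equivalence of module categories which does \emph{not} a priori respect the forgetful functors nevertheless forces an honest Drinfeld twist rather than merely a Morita equivalence --- and the bounded-versus-Grothendieck comparison used to bring $(4)$ into the loop. The truly hard mathematics, namely reconstructing the $t$-structure, and hence the abelian tensor category, from the monoidal triangulated structure, is entirely encapsulated in the main theorem and is not re-proved here; so, within the proof of this Corollary, I expect the $\Der^{b}(\mods\-H)$-versus-$\Der^{b}(\Mod\-H)$ reduction to be the main obstacle.
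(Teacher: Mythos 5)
Most of your outline matches the paper's route: the paper proves this Corollary simply by specializing Theorem \ref{4 eq} to $\mathcal{A}=\Mod\-H$, $\mathcal{B}=\Mod\-H'$ (so that $\fp(\mathcal{A})=\mods\-H$), which gives the equivalence of (2)--(5) at once, and then quotes Ng--Schauenburg for $(1)\Leftrightarrow(2)$ (Remark \ref{gauge}); your soft layer ($(1)\Leftrightarrow(2)$ by twisting/Tannakian reconstruction, $(2)\Leftrightarrow(3)$ by extension along filtered colimits as in Proposition \ref{Morita tensor eq}, $(2)\Rightarrow(4)$ and $(3)\Rightarrow(5)$ by deriving an exact monoidal equivalence) is the same content, with the Tannakian direction being exactly what the cited reference supplies.

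The one genuine problem is how you fold statement (4) into the loop. You propose to promote a monoidal triangulated equivalence $\Der^{b}(\mods\-H)\simeq\Der^{b}(\mods\-H')$ to one of $\Der^{b}(\Mod\-H)\simeq\Der^{b}(\Mod\-H')$ ``by the appropriate completion'' and then invoke $(5)\Rightarrow(3)$. This step is not justified and would be hard to make rigorous: identifying $\Der^{b}(\mods\-H)$ inside $\Der^{b}(\Mod\-H)$ as the complexes with finite-dimensional cohomology goes in the wrong direction for building a functor on the big category, and no naive completion of the triangulated category $\Der^{b}(\mods\-H)$ recovers $\Der^{b}(\Mod\-H)$ together with its monoidal triangulated structure (Ind-completion of a triangulated category is not triangulated in general, and the stable-completion constructions that do exist produce unbounded derived categories, not $\Der^{b}$). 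The detour is also unnecessary: the hedge you state at the end is satisfied, since the paper's Theorem \ref{4 eq} already contains the ``finite'' version --- its item (3) asserts that a monoidal triangulated equivalence $\Der^{b}(\fp(\mathcal{A}))\simeq\Der^{b}(\fp(\mathcal{B}))$ is equivalent to a monoidal abelian equivalence $\fp(\mathcal{A})\simeq\fp(\mathcal{B})$, proved directly on the small derived category via monoidal t-structures (Proposition \ref{3to1 and 4to2}(1), resting on Lemma \ref{b.on.f.d}: all bounded t-structures on a triangulated category with a finite abelian heart are equivalent). So $(4)\Rightarrow(2)$ should be obtained by citing that item and then Proposition \ref{Morita tensor eq}, not by passing through $\Der^{b}(\Mod\-H)$; as written, your primary argument for (4) has a gap, while your fallback is exactly what the paper does.
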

More generally, we discuss the case of a \textit{locally finite tensor Grothendieck category} $\mathcal{A}$ which is a locally finitely presented Grothendieck category such that the full subcategory $\textnormal{\fp}(\mathcal{A})$ consisting of finitely presented objects forms a finite tensor category. In this article, a \textit{tensor category} is a rigid monoidal abelian category in the sense of \cite[Definition 4.1.1]{etingof2016tensor}, and a \textit{finite abelian category} is a length category such that there are finitely many isomorphism classes of simple objects. Corollary \ref{Corollary A} is a special case of the following general result. 

\begin{Theox} \emph{(Theorem \ref{4 eq})}\label{Theorem B}
Let $\mathcal{A}$ and $\mathcal{B}$ be locally finite tensor Grothendieck categories. Then the following are equivalent:
\begin{enumerate}
\item $\textnormal{\fp}(\mathcal{A})$ and $\textnormal{\fp}(\mathcal{B})$ are monoidal abelian equivalent;
\item $\mathcal{A}$ and $\mathcal{B}$ are monoidal abelian equivalent;
\item $\textnormal{\Der}^{b}(\textnormal{\fp}(\mathcal{A}))$ and $\textnormal{\Der}^{b}(\textnormal{\fp}(\mathcal{B}))$ are monoidal triangulated equivalent;
\item $\textnormal{\Der}^{b}(\mathcal{A})$ and $\textnormal{\Der}^{b}(\mathcal{B})$ are monoidal triangulated equivalent.

\end{enumerate}

\end{Theox}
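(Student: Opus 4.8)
The plan is to establish the cycle of implications $(1)\Rightarrow(2)\Rightarrow(4)\Rightarrow(3)\Rightarrow(1)$, with the two genuinely nontrivial arrows being $(1)\Rightarrow(2)$ (passing from finitely presented objects to the whole Grothendieck category) and $(4)\Rightarrow(3)$ (descending a monoidal derived equivalence from the "big" to the "small" derived category). The implications $(2)\Rightarrow(4)$ and $(3)\Rightarrow(1)$ should be essentially formal: a monoidal abelian equivalence $F\colon\mathcal{A}\to\mathcal{B}$ is exact, hence induces a triangulated functor $\Der^b(F)$, and monoidality of $F$ upgrades to monoidality of $\Der^b(F)$ because the derived tensor product is computed on acyclic resolutions that $F$ preserves; the same works verbatim for $\textnormal{\fp}(\mathcal{A})\to\textnormal{\fp}(\mathcal{B})$. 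For $(3)\Rightarrow(1)$, one takes the heart of the standard t-structure on $\Der^b(\textnormal{\fp}(\mathcal{A}))$, notes it is $\textnormal{\fp}(\mathcal{A})$, and argues that the monoidal triangulated equivalence must send this t-structure to \emph{the} standard one on the other side (any other monoidal t-structure with the right properties must coincide, using that the unit object is "minimal" and that tensoring is exact on the heart), so the equivalence restricts to a monoidal abelian equivalence of hearts.

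For $(1)\Rightarrow(2)$: given a monoidal abelian equivalence $G\colon\textnormal{\fp}(\mathcal{A})\to\textnormal{\fp}(\mathcal{B})$, I would extend it to $\mathcal{A}=\Ind(\textnormal{\fp}(\mathcal{A}))\to\Ind(\textnormal{\fp}(\mathcal{B}))=\mathcal{B}$ by taking filtered colimits (i.e. the ind-completion functor), which is an abelian equivalence since both categories are locally finitely presented with $\textnormal{\fp}$ as the subcategory of finitely presented objects. The content is that this extension is monoidal: the tensor product on $\mathcal{A}$ is the unique colimit-preserving (in each variable) extension of the tensor product on $\textnormal{\fp}(\mathcal{A})$ — here one uses that in a locally finite tensor category the tensor product of finitely presented objects is finitely presented and that $\tensor$ commutes with filtered colimits — so the coherence isomorphisms for $G$ on $\textnormal{\fp}$ extend uniquely to coherence isomorphisms for the ind-extension, and the pentagon/hexagon/unit axioms propagate because they hold after restriction to a generating family of finitely presented objects and all functors in sight preserve filtered colimits.

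For the crucial $(4)\Rightarrow(3)$: the strategy is to reconstruct $\textnormal{\fp}(\mathcal{A})$ inside $\Der^b(\mathcal{A})$ intrinsically, in a way visibly preserved by monoidal triangulated equivalences. The key point is that $\Der^b(\textnormal{\fp}(\mathcal{A}))$ sits inside $\Der^b(\mathcal{A})$ as the full subcategory of complexes whose cohomologies are finitely presented, and one wants a description of "finitely presented" using only the triangulated and monoidal structure of $\Der^b(\mathcal{A})$. Since $\textnormal{\fp}(\mathcal{A})$ is a finite tensor category, its unit object $\mathbbm{1}$ generates under the tensor ideal structure, and finite-length objects are precisely those admitting finite filtrations by simple objects, which in turn are the simple subquotients of tensor powers / duals built from $\mathbbm{1}$ — but more robustly, an object of $\Der^b(\mathcal{A})$ lies in (the image of) $\Der^b(\textnormal{\fp}(\mathcal{A}))$ iff it is a compact object relative to the standard t-structure, or iff $\Hom(\mathbbm{1}, X[n])$ behaves finitely; I would identify $\textnormal{\fp}(\mathcal{A})$ with the rigid (= dualizable) objects in the heart, and then observe that both "heart of the standard t-structure" and "dualizable objects" are notions transported by any monoidal triangulated equivalence.

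The main obstacle I anticipate is precisely the t-structure part of $(4)\Rightarrow(3)$: a monoidal triangulated equivalence between the big derived categories need not a priori be t-exact, so one must show that the standard t-structure on $\Der^b(\mathcal{A})$ is \emph{characterized} monoidally — e.g. as the unique bounded t-structure whose heart is a tensor subcategory containing the unit, with the heart generating, perhaps invoking the finiteness (the heart is a locally finite Grothendieck category with a rigid generator) to pin it down uniquely. This is where I expect to need a genuine argument rather than formal nonsense, likely along the lines of: any monoidal bounded t-structure must have the unit $\mathbbm{1}$ in its heart (since $\mathbbm{1}$ is indecomposable and "minimal" for the tensor structure), and then rigidity forces all of $\textnormal{\fp}(\mathcal{A})$ into the heart, while boundedness forces the heart to be no larger. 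Once the standard t-structures correspond, the equivalence restricts to a monoidal abelian equivalence of hearts, and restricting further to finitely presented objects — equivalently to $\Der^b(\textnormal{\fp}(\mathcal{A}))\simeq \Der^b(\textnormal{fp}(\mathcal{B}))$ by the standard "bounded derived category of a noetherian/length heart" comparison — completes the implication.
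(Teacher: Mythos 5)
Your skeleton agrees with the paper on three of the four arrows: your $(1)\Rightarrow(2)$ is the paper's extension along filtered colimits (Proposition \ref{Morita tensor eq}, via Lemma \ref{extendsion lemma1}), and your $(2)\Rightarrow(4)$, $(1)\Rightarrow(3)$-type step is Proposition \ref{1to3 and 2to4}. The genuine gap sits exactly where you yourself locate the difficulty: showing that a monoidal triangulated equivalence must match up the standard t-structures. The justification you sketch --- ``any monoidal bounded t-structure must have the unit $\mathbbm{1}$ in its heart since $\mathbbm{1}$ is indecomposable and minimal, then rigidity forces $\fp(\mathcal{A})$ into the heart, while boundedness forces the heart to be no larger'' --- does not work as stated. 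Membership of $\mathbbm{1}$ in the heart is not a formal consequence of indecomposability: a priori one only gets $H^{0}_{\mathbbm{t}}(\mathbbm{1})\neq 0$, and the paper proves $\mathbbm{1}\in\mathcal{H}_{\mathbbm{t}}$ (Proposition \ref{ring prop}) only after establishing a K\"unneth formula for monoidal t-structures (Theorem \ref{K.lemma}) together with \emph{tensor reducedness}, which in turn rests on the triviality of Serre tensor ideals in finite tensor categories (Proposition \ref{trivial ideal}) and its extension to locally finite tensor Grothendieck categories (Theorem \ref{0tensor lemma}); none of this machinery, nor a substitute for it, appears in your proposal. Likewise ``boundedness forces the heart to be no larger'' is not an argument. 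What is actually needed is (a) that every bounded t-structure on $\Der^{b}$ of a length category, respectively of a Grothendieck category, is equivalent to the standard one --- the Grothendieck case is the nontrivial Lemma \ref{bounded in big}, proved via an injective cogenerator and a perpendicular-category argument --- and (b) that two \emph{equivalent} tensor reduced monoidal t-structures are equal (Theorem \ref{unique u.t.e}), which again uses the K\"unneth formula plus an analysis of the deviation. Without (a) and (b) your $(4)\Rightarrow(3)$, and equally the uniqueness claim buried in your $(3)\Rightarrow(1)$, remain unproved.

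As a secondary remark on routing: the paper closes the cycle by proving $(3)\Rightarrow(1)$ and $(4)\Rightarrow(2)$ directly (Proposition \ref{3to1 and 4to2}, via Corollary \ref{unique u.t.e cor}), whereas you aim at $(4)\Rightarrow(3)$ by identifying $\Der^{b}(\fp(\mathcal{A}))$ inside $\Der^{b}(\mathcal{A})$ as complexes with finitely presented cohomology and $\fp(\mathcal{A})$ as the dualizable objects of the heart. That variant is viable in principle --- monoidal equivalences do preserve duals, and once the standard hearts are matched one can restrict to rigid objects and then compare bounded derived categories --- but it adds an extra comparison ($\Der^{b}(\fp(\mathcal{A}))\simeq$ complexes in $\Der^{b}(\mathcal{A})$ with finitely presented cohomology) that would itself need proof, while the heavy lifting it depends on is exactly the t-structure rigidity discussed above. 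So the missing ingredient is the same on either route.
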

The large module category $\Mod\-H$ of a finite-dimensional (weak quasi-)Hopf algebra $H$ is a locally finite tensor Grothendieck category, since $\textnormal{\fp}(\Mod\-H)=\mods\-H$ is a finite tensor category \cite{haring1997reconstruction}. In fact, Corollary \ref{Corollary A} holds for all finite-dimensional (weak quasi-)Hopf algebras.

It is unusual that the derived category of an abelian category determines the abelian category. Results showing under some assumptions that this happens are known as reconstruction theorems. A reconstruction theorem in algebraic geometry, due to Bondal and Orlov \cite{bondal2001reconstruction}, shows for a smooth irreducible projective variety $X$ with ample canonical or anticanonical sheaf, if the bounded derived category $\Der^{b}_{\text{coh}}(X)$ is equivalent to $\Der^{b}_{\text{coh}}(X')$ as a graded category for a smooth algebraic variety $X'$, then $X$ and $X'$ are isomorphic. This does not use a monoidal structure, but a grading. Balmer showed that the derived category of coherent sheaves on a smooth variety, when considered as a monoidal triangulated category, completely determines the variety \cite{balmer2002presheaves}. As a corollary, a monoidal derived equivalence of the perfect complexes over two reduced noetherian schemes induces an isomorphism between these two schemes. Aihara and Mizuno showed that a preprojective algebra of Dynkin type is derived equivalent only to itself, up to Morita equivalence \cite{aihara2017classifying}. Zhang and Zhou in \cite{zhang2022frobenius} proved a reconstruction theorem for finite-dimensional weak bialgebras, assuming that these are hereditary.

The big difference between derived Morita theory of algebras and Theorem \ref{Theorem B} can be seen in the behaviour of the main tool we use to prove Theorem \ref{Theorem B}. Instead of t-structures, introduced by Beilinson, Bernstein and Deligne in \cite{beilinson1982perverse} and used by Alonso Tarr{\'\i}o, Jeremías L{\'o}pez and Souto Salorio to give an alternative proof of Rickard's theorem \cite{alonso2003construction}, in this article monoidal t-structures are introduced, modifying mtt-structures defined in \cite{zhang2022frobenius}. In the situation of interest here, these are shown to be tensor reduced. In Corollary \ref{yy main1} it is shown that hearts of monoidal t-structures are monoidal abelian categories. Corollary \ref{dual prop} strengthens this by showing that hearts of tensor reduced monoidal t-structures are even tensor categories. And then it turns out that monoidal triangulated equivalences induce monoidal equivalences between hearts - a statement that fails completely in the absence of a tensor structure. 

An outline of this paper is as follows.

In Section \ref{t-stru}, we present the definitions of \textit{monoidal t-structures} and their corresponding deviation (see Definition \ref{definition of monoidal t-structure}) on monoidal triangulated categories.  The discussion of tensor reducedness (see Definition \ref{reduced monoidal})  helps us to prove that the equivalent tensor reduced monoidal t-structures are equal (see Theorem \ref
{unique u.t.e}), which implies that their hearts are equivalent as monoidal abelian categories (see Corollary \ref{unique u.t.e cor}). Moreover, the cases of locally finite tensor Grothendieck categories and finite tensor categories fit into the setup of Corollary \ref{unique u.t.e cor}, which helps us prove our main results, as discussed in Section \ref{L.N.T.R.G} and Section \ref{Morita/Rickard eq}.

\par  Section \ref{L.N.T.R.G} is devoted to introduce the concept of locally finite tensor Grothendieck categories (see Definition \ref{L.N.T.G cat}). For a finite-dimensional Hopf algebra $H$, $\mods\-H$ is a tensor category, whereas $\Mod\-H$ is not rigid. We are going to generalize the results from tensor categories to more general monoidal abelian categories by exploring locally finite tensor Grothendieck categories as a categorical version of $\Mod\-H$. As a generalized version of Morita equivalences, we prove that locally finite tensor Grothendieck categories $\mathcal{A}$ and $\mathcal{B}$ are monoidal abelian equivalent if and only if $\fp(\mathcal{A})$ and $\fp(\mathcal{B})$ are monoidal abelian equivalent (see Proposition \ref{Morita tensor eq}). A special case of this statement yields a corresponding result for modules over finite-dimensional Hopf algebras. Furthermore, we also prove that locally finite tensor Grothendieck categories and finite tensor categories are tensor reduced (see Proposition \ref{trivial ideal} and Theorem \ref{0tensor lemma}).

\par In Section \ref{Morita/Rickard eq}, we apply the results of Section \ref{t-stru} and \ref{L.N.T.R.G} to prove our main results, Theorem \ref{Theorem B} and Corollary \ref{Corollary A}. Moreover, we consider the stable categories of finite tensor categories with enough projectives called \textit{monoidal stable categories} which are Frobenius categories. Then a monoidal version of a theorem of Rickard concerning stable categories is given (see Corollary \ref{result4}).

\subsection*{Notation}
Throughout this paper, $\k$ is assumed to be an algebraically closed field. All categories are $\mathbbm{k}$-linear categories. For any ring $A$, the category of all right A-modules is denoted by $\Mod\-A$, and the category of finitely generated right $A\-$modules is denoted by $\mods\-A$. Unless otherwise stated, all the modules considered in this paper are right modules and the word subcategory stands for a full and strict subcategory.

\section{Monoidal t-structures on monoidal triangulated categories}\label{t-stru}

\subsection{Tensor categories and monoidal triangulated categories}

We firstly recall some definitions and properties related to monoidal categories (in the sense of \cite[Definition 2.2.8]{etingof2016tensor}). Readers are referred to \cite{etingof2016tensor} for more details. Afterwards, our main tools monoidal triangulated categories, especially monoidal derived categories, get introduced and investigated.

\par Let $(\mathcal{C}, \otimes, \mathbbm{1})$ be a monoidal category. An object $X^{\ast}$ in $\mathcal{C}$ is said to be a \textbf{left dual} of $X$ if there exist morphisms $\ev_{X}:X^{\ast} \otimes X \rightarrow \mathbbm{1}$ and $\coev_{X}: \mathbbm{1} \rightarrow X \otimes X^{\ast}$, called the \textbf{evaluation} and \textbf{coevaluation}, such that the following compositions 
$$X \stackrel{\coev_{X} \otimes \text{id}_{X}}{\xrightarrow{\hspace*{1.5cm}}} X\otimes X^{\ast} \otimes X \stackrel{\text{id}_{X}\otimes \ev_{X}}{\xrightarrow{\hspace*{1.5cm}}} X\;\;\text{and}\;\;X^{\ast} \stackrel{\text{id}_{X^{\ast}}\otimes \coev_{X}  }{\xrightarrow{\hspace*{1.5cm}}} X^{\ast}\otimes X \otimes X^{\ast} \stackrel{\ev_{X}\otimes \text{id}_{X^{\ast}} }{\xrightarrow{\hspace*{1.5cm}}} X^{\ast}$$
are the identity morphisms. Dually, we can define a \textbf{right dual} ${^{\ast}X}$ of $X$. If $X\in\mathcal{C}$ has a left (resp. right) dual object, then it is unique up to unique isomorphism (see \cite[Proposition 2.10.5]{etingof2016tensor}). An object in a monoidal category is called $\textbf{rigid}$ if it has left and right duals. A monoidal category \textbf{has left duals} (resp. \textbf{has right duals}) if every object has a left (resp. right) dual. Moreover, a monoidal category is called $\textbf{rigid}$ if every object is rigid.

\begin{remark}(\cite[Exercise 2.10.6]{etingof2016tensor}) \label{rigid under monoidal}
    Let $F:\mathcal{C}\rightarrow\mathcal{C'}$ be a monoidal functor between two monoidal categories. If $X$ is an object in $\mathcal{C}$ with a left dual $X^{\ast}$, then $F(X^{\ast})$ is a left dual of $F(X)$. The same result holds for right duals.
\end{remark}

\begin{definition}(\cite[Definition 4.1.1]{etingof2016tensor})\label{tensor abelian cat}
A rigid monoidal abelian category $(\mathcal{C}, \otimes, \mathbbm{1})$ is called a \textbf{multitensor category} if the bifunctor $\otimes: \mathcal{C}\times \mathcal{C}\rightarrow \mathcal{C}$ is bilinear on morphisms. If in addition $\text{End}_{\mathcal{C}}(\mathbbm{1})\cong \k$, then $\mathcal{C}$ is called a \textbf{tensor category}.
\end{definition}
The bifunctor $\otimes$ in a multitensor category is exact in each variable (i.e., biexact) \cite[Proposition 4.2.1]{etingof2016tensor}. Note that here we do not require the condition ``locally finite" as in \cite[Definition 4.1.1]{etingof2016tensor} since it is not necessary in this whole section.

\begin{example}\label{hopf algebra}
Roughly speaking, a Hopf algebra is a bialgebra carrying an antipode $S$. The reader is referred to \cite{montgomery1993hopf} for details on Hopf algebras. Considering a  finite-dimensional Hopf algebra $H$ over $\mathbbm{k}$, the category $\mods\-H$ is a monoidal categoy, with $\otimes_{\k}$ being the tensor product of $H\-$modules over $\k$ and the unit object $\k$. Moreover, for any right $H\-$module $X$, there are two actions of $H$ on the $\k\-$linear dual space $X^*$ by using the antipode $S$ and $S^{-1}$. These two different actions on $X^*$ turn $X^*$ into the right and left dual of $X$ respectively. In conclusion, ($\mods\-H, \otimes_{\k}, \k)$ is a tensor category.
\end{example}

According to \cite{nakano2022noncommutative}, a \textbf{monoidal triangulated category} $(\mathcal{T},\otimes,\Sigma,\mathbbm{1})$ is a triangulated category with shift functor $\Sigma$, which is also a monoidal category with  unit object $\mathbbm{1}$, such that the bifunctor $\otimes$ is exact in each variable. This involves isomorphisms $e_{X, Y}: \Sigma(X)\otimes Y\cong \Sigma(X\otimes Y) $ and $\theta_{X, Y}: X\otimes \Sigma(Y)\cong \Sigma(X\otimes Y)$, which are natural in any $X$, $Y\in\mathcal{T}.$  A \textbf{monoidal triangulated functor} is a triangulated functor respecting the monoidal structures and sending the unit to the unit. Two monoidal triangulated categories $\mathcal{T}$ and $\mathcal{T}'$ are said to be \textbf{monoidal triangulated equivalent} if there is a monoidal triangulated functor inducing an equivalence between $\mathcal{T}$ and  $\mathcal{T}'$.

Our definition of monoidal triangulated categories does not require further assumptions that sometimes are used in the literature. For example, there is literature (see \cite{balmer2010tensor, zhang2022frobenius}) requiring the natural isomorphisms $e_{-,-}$ and $\theta_{-,-}$ satisfying the anti-commuting diagram given in the definition of a suspended monoidal category \cite[Definition A.2.1]{mariano2004hilton}. In addition, the authors in \cite{balmer2010tensor, balmer2005spectrum} consider symmetric monoidal structures on monoidal triangulated categories called tensor triangulated categories.

\begin{example}\label{monoidal tri cat1}
For a monoidal abelian category $(\mathcal{A}, \otimes, \mathbbm{1})$ with biexact tensor product, there is a monoidal structure on the category of bounded chain complexes $\textbf{C}^{b}(\mathcal{A})$. Namely, for $X^{\bullet}, Y^{\bullet} \in \textbf{C}^{b}(\mathcal{A})$, $X^{\bullet}\widetilde{\otimes} Y^{\bullet}$ is defined to be the total complex
$$(X^{\bullet}\widetilde{\otimes} Y^{\bullet})^{n}:=\coprod_{p+q=n} X^{p}\otimes Y^{q}$$
with differentials
$$d^{n}_{X^{\bullet}\widetilde{\otimes} Y^{\bullet}}:=\sum_{p+q=n}(d^{p}_{X}\otimes \text{id}_{Y^{q}}+(-1)^{p}\text{id}_{X^{p}}\otimes d^{q}_{Y}).$$
Then $(\textbf{C}^{b}(\mathcal{A}), \widetilde{\otimes}, \mathbbm{1}^{\bullet})$ becomes a monoidal abelian category with biexact monoidal structure, where $\mathbbm{1}^{\bullet}$ is the stalk complex with $\mathbbm{1}$ concentrated in degree $0$. It is clear that the bounded homotopy category $(\textbf{K}^{b}(\mathcal{A}), \widetilde{\otimes}, \mathbbm{1}^{\bullet})$ is also a monoidal category whose monoidal structure $\widetilde{\otimes}$ is inherited from $\textbf{C}^{b}(\mathcal{A})$. Since $\widetilde{\otimes}$ preserves null-homotopy, $\textbf{K}^{b}(\mathcal{A})$ is a monoidal triangulated category. Furthermore, Acyclic Assembly Lemma in \cite [Lemma 2.7.3]{weibel1994homological} tells us that $\widetilde{\otimes}$ also preserves quasi-isomorphisms, which shows that the bounded derived category $(\textbf{D}^{b}(\mathcal{A}), \widetilde{\otimes}, \mathbbm{1}^{\bullet})$ is a monoidal triangulated category whose monoidal structure $\widetilde{\otimes}$ is inherited from $\textbf{K}^{b}(\mathcal{A}).$
\end{example}

\begin{remark}
     In this paper, the derived categories carrying monoidal structures as in Example \ref{monoidal tri cat1} are called \textbf{monoidal derived categories}. Monoidal triangulated equivalences between bounded derived categories will be simply called \textbf{monoidal derived equivalences}.
\end{remark}

\subsection{t-structures}
In this subsection, we will recall some basic definitions and properties related to t-structures on triangulated categories. Let $(\mathcal{T},\Sigma)$ be a triangulated category where $\Sigma$ is the translation functor.

\begin{definition}(\cite[Definition 1.3.1]{beilinson1982perverse})
A pair of full subcategories $\mathbbm{t}=(\mathcal{D}^{\leqslant 0},\mathcal{D}^{\geqslant 1})$ in $\mathcal{T}$ is said to be a \textbf{t-structure} on $\mathcal{T}$, if $\mathcal{D}^{\leqslant 0}, \mathcal{D}^{\geqslant 1}$ satisfy the following conditions:
\begin{itemize}
\item[(T1)] $\Sigma\mathcal{D}^{\leqslant 0} \subseteq \mathcal{D}^{\leqslant 0}$ and $\mathcal{D}^{\geqslant 1} \subseteq \Sigma \mathcal{D}^{\geqslant 1}$;
\item[(T2)] $\text{Hom}_{\mathcal{T}}(\mathcal{D}^{\leqslant 0},\mathcal{D}^{\geqslant 1})=0$;
\item[(T3)] For any object $X \in \mathcal{T}$, there is a distinguished triangle 

$$X^{\leqslant 0} {\rightarrow} X {\rightarrow} X^{\geqslant 1} \rightarrow \Sigma X^{\leqslant 0},$$
where $X^{\leqslant 0} \in \mathcal{D}^{\leqslant 0}$ and $X^{\geqslant 1} \in \mathcal{D}^{\geqslant 1}$.
\end{itemize}
The subcategories $\mathcal{D}^{\leqslant 0}$ and $\mathcal{D}^{\geqslant 1}$ are called the \textbf{aisle} and \textbf{coaisle} of $\mathbbm{t}$ respectively.
\end{definition}

Let $\mathbbm{t}=(\mathcal{D}^{\leqslant 0},\mathcal{D}^{\geqslant 1})$ and $\mathbbm{t_1}=(\mathcal{D}_{1}^{\leqslant 0},\mathcal{D}_{1}^{\geqslant 1})$ be t-structures on $\mathcal{T}$. The following definitions and notation we will be used later. 
\begin{itemize}
\item For any $n\in \mathbbm{Z}$, let $\mathcal{D}^{\leqslant n}:=\Sigma^{-n} \mathcal{D}^{\leqslant 0}$, $\mathcal{D}^{\geqslant n+1}:=\Sigma^{-n} \mathcal{D}^{\geqslant 1}$ and $\Sigma^{-n} \mathbbm{t}:=(\mathcal{D}^{\leqslant n},\mathcal{D}^{\geqslant n+1})$ which is also a t-structure on $\mathcal{T}$ \cite[Remark 10.1.2]{kashiwara1990sheaves}.
\item $\mathcal{H}_{\mathbbm{t}}:=\mathcal{D}^{\leqslant 0} \cap \mathcal{D}^{\geqslant 0}$ is called the heart of $\mathbbm{t}$. It is an abelian category (see \cite[Proposition 10.1.11]{beilinson1982perverse}). There is a \textbf{cohomological
functor} $H^{0}_{\mathbbm{t}}:\mathcal{T}\longrightarrow \mathcal{H}_{\mathbbm{t}}$ (i.e. a functor sending distinguished triangles in $\mathcal{T}$ to long exact sequences in $\mathcal{H}_{\mathbbm{t}}$) defined by:
$$H^{0}_{\mathbbm{t}}(X):=\tau_{\mathbbm{t}}^{\leqslant 0} \tau_{\mathbbm{t}}^{\geqslant 0}(X)\cong \tau_{\mathbbm{t}}^{\geqslant 0} \tau_{\mathbbm{t}}^{\leqslant 0}(X)\;\text{for all $X\in \mathcal{T}$},$$
where $\tau_{\mathbbm{t}}^{\leqslant 0}$ and $\tau_{\mathbbm{t}}^{\geqslant 0}$ are the \textbf{truncation functors} (i.e. the left and right adjoint functor respectively of the inclusions of $\mathcal{D}^{\leqslant 0}$ and $\mathcal{D}^{\geqslant 0}$ in $\mathcal{T}$). In the same way, one can also define functors $\tau_{\mathbbm{t}}^{\leqslant n}$, $\tau_{\mathbbm{t}}^{\geqslant n}$ and $H^{n}_{\mathbbm{t}}:=\tau_{\mathbbm{t}}^{\leqslant 0}\tau_{\mathbbm{t}}^{\geqslant 0} \Sigma^{n}\cong\Sigma^{n}\tau_{\mathbbm{t}}^{\leqslant n}\tau_{\mathbbm{t}}^{\geqslant n}$ \cite{beilinson1982perverse}.
\item Let $\mathbbm{t}^{+}:=\mathop{\cup}\limits_{n\in \mathbbm{Z}} \mathcal{D}^{\geqslant n}$, $\mathbbm{t}^{-}:=\mathop{\cup}\limits_{n\in \mathbbm{Z}} \mathcal{D}^{\leqslant n}$ and $\mathbbm{t}^{b}:=\mathbbm{t}^{+}\cap \mathbbm{t}^{-}$. The t-structure $\mathbbm{t}$ is called \textbf{bounded below \textnormal{(resp.} bounded above, bounded}) if $\mathbbm{t}^{+}=\mathcal{T}$ (resp. $\mathbbm{t}^{-}=\mathcal{T}$, $\mathbbm{t}^{b}=\mathcal{T}$).
\item $\mathbbm{t}$ and $\mathbbm{t}_{1}$ are called \textbf{equivalent} if there exist $m \leqslant n \in \mathbbm{Z}$ such that $\mathcal{D}^{\leqslant m} \subseteq \mathcal{D}_{1}^{\leqslant 0} \subseteq \mathcal{D}^{\leqslant n}$ (if and only if $\mathcal{D}^{\geqslant m} \subseteq \mathcal{D}_{1}^{\geqslant 0} \subseteq \mathcal{D}^{\geqslant n}$ see \cite[Lemma 4.1]{chen2022extensions}). It is clear that $\mathbbm{t}$ is equivalent to $\Sigma^{n}\mathbbm{t}$ for any $n\in \mathbbm{Z}$.
\end{itemize}

\begin{example}
Let $\mathcal{A}$ be an abelian category. We consider the \textbf{standard t-structure} $\mathbbm{t}_{\mathcal{A}}:=(\mathcal{D}_{\mathcal{A}}^{\leqslant 0}, \mathcal{D}_{\mathcal{A}}^{\geqslant 1})$ on its derived category $\Der^{\ast}(\mathcal{A})$ as follows where $\ast \in \{\emptyset,+,-,b\}$:
$$\mathcal{D}_{\mathcal{A}}^{\leqslant 0}:=\{X \in \Der^{\ast}(\mathcal{A})\;|\; H^{i}(X)=0, \forall i \geqslant 1\}\;,\;\mathcal{D}_{\mathcal{A}}^{\geqslant 1}:=\{X \in \Der^{\ast}(\mathcal{A})\;|\; H^{i}(X)=0, \forall i \leqslant 0\}.$$
In this case, the heart of $\mathbbm{t}_{\mathcal{A}}$ is equivalent to $\mathcal{A}$ \cite{kashiwara1990sheaves}. When $\ast$ is $+$ (resp. $-$, $b$), the standard t-structure $\mathbbm{t}_{\mathcal{A}}$ is bounded below (resp. bounded above, bounded). 

In the following, if $\mathcal{A}=\mods\-R$ or $\mathcal{A}=\Mod\-R$ for a coherent ring $R$, the standard t-structure is denoted by $\mathbbm{t}_{R}$. 

\end{example}

We now turn to the description of objects in the aisle and coaisle of a t-structure.
\begin{lemma}\emph{(\cite[Proposition 10.1.6]{kashiwara1990sheaves})} \label{trancation}
Let $\mathbbm{t}=(\mathcal{D}^{\leqslant 0},\mathcal{D}^{\geqslant 1})$ be a t-structure on $\mathcal{T}$.
     \begin{itemize}
         \item[(1)] If $X\in \mathcal{D}^{\leqslant n}$ \emph{(}resp. $X\in \mathcal{D}^{\geqslant n}$\emph{)}, then $\tau_{\mathbbm{t}}^{\leqslant n}X\cong X$ \emph{(}resp. $\tau_{\mathbbm{t}}^{\geqslant n}X\cong X$\emph{)}.
         \item[(2)] Let $X\in \mathcal{T}$. Then $X\in \mathcal{D}^{\leqslant n}$ \emph{(}resp. $X\in \mathcal{D}^{\geqslant n}$\emph{)} if and only if $\tau_{\mathbbm{t}}^{\geqslant n+1}X=0$ \emph{(}resp. $\tau_{\mathbbm{t}}^{\leqslant n-1}X=0$\emph{)}.
     \end{itemize}
\end{lemma}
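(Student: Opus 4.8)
The statement to prove is Lemma \ref{trancation}, which is cited as \cite[Proposition 10.1.6]{kashiwara1990sheaves}. The plan is to derive both assertions directly from the axioms (T1)--(T3), using the adjunction property of the truncation functors and the uniqueness of truncation triangles.

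\textbf{Part (1).} First I would reduce to the case $n=0$ by applying $\Sigma^{n}$, since $\mathcal{D}^{\leqslant n}=\Sigma^{-n}\mathcal{D}^{\leqslant 0}$ and $\tau_{\mathbbm{t}}^{\leqslant n}=\Sigma^{-n}\tau_{\mathbbm{t}}^{\leqslant 0}\Sigma^{n}$. Now suppose $X\in\mathcal{D}^{\leqslant 0}$. Take the truncation triangle $\tau_{\mathbbm{t}}^{\leqslant 0}X\to X\to \tau_{\mathbbm{t}}^{\geqslant 1}X\to\Sigma\tau_{\mathbbm{t}}^{\leqslant 0}X$ from (T3). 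The key point is that the canonical morphism $\tau_{\mathbbm{t}}^{\leqslant 0}X\to X$ is, by the adjunction defining $\tau_{\mathbbm{t}}^{\leqslant 0}$ as right adjoint to the inclusion $\mathcal{D}^{\leqslant 0}\hookrightarrow\mathcal{T}$, the counit; when $X$ already lies in $\mathcal{D}^{\leqslant 0}$ this counit is an isomorphism (a right adjoint to a fully faithful inclusion restricts to the identity on the subcategory). Hence $\tau_{\mathbbm{t}}^{\leqslant 0}X\cong X$. The statement for $\tau_{\mathbbm{t}}^{\geqslant n}$ is dual, using that $\tau_{\mathbbm{t}}^{\geqslant 0}$ is left adjoint to $\mathcal{D}^{\geqslant 0}\hookrightarrow\mathcal{T}$ and the unit is an isomorphism on objects of $\mathcal{D}^{\geqslant 0}$. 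Alternatively, if one does not wish to invoke the adjoint description, one can argue directly: given $X\in\mathcal{D}^{\leqslant 0}$, the triangle $X\xrightarrow{\id}X\to 0\to\Sigma X$ is a truncation triangle with $X\in\mathcal{D}^{\leqslant 0}$ and $0\in\mathcal{D}^{\geqslant 1}$, and by the uniqueness of such triangles (which follows from (T2) together with a standard argument that the truncation triangle of any object is unique up to unique isomorphism) it is isomorphic to the defining one, giving $\tau_{\mathbbm{t}}^{\leqslant 0}X\cong X$.

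\textbf{Part (2).} Again reduce to $n=0$, so the claim is: $X\in\mathcal{D}^{\leqslant 0}$ iff $\tau_{\mathbbm{t}}^{\geqslant 1}X=0$. For the forward direction, if $X\in\mathcal{D}^{\leqslant 0}$ then by Part (1) $\tau_{\mathbbm{t}}^{\leqslant 0}X\cong X$, and the truncation triangle becomes $X\xrightarrow{\id}X\to\tau_{\mathbbm{t}}^{\geqslant 1}X\to\Sigma X$; rotating, $\tau_{\mathbbm{t}}^{\geqslant 1}X$ is the cone of an isomorphism, hence $\tau_{\mathbbm{t}}^{\geqslant 1}X=0$. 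Conversely, if $\tau_{\mathbbm{t}}^{\geqslant 1}X=0$, then the truncation triangle reads $\tau_{\mathbbm{t}}^{\leqslant 0}X\to X\to 0\to\Sigma\tau_{\mathbbm{t}}^{\leqslant 0}X$, so the morphism $\tau_{\mathbbm{t}}^{\leqslant 0}X\to X$ is an isomorphism (its cone is $0$), and since $\tau_{\mathbbm{t}}^{\leqslant 0}X\in\mathcal{D}^{\leqslant 0}$ and this subcategory is closed under isomorphism, $X\in\mathcal{D}^{\leqslant 0}$. The statement with $\mathcal{D}^{\geqslant n}$ and $\tau_{\mathbbm{t}}^{\leqslant n-1}$ is proved dually.

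The only delicate point is the well-definedness and adjoint characterization of the truncation functors, i.e.\ that (T1)--(T3) force the object $X^{\leqslant 0}$ in (T3) to be functorial in $X$ and to give a right adjoint to the inclusion; this is the standard foundational fact about t-structures (\cite[Proposition 10.1.4]{kashiwara1990sheaves}, \cite[\S1.3]{beilinson1982perverse}) and I would simply cite it, as the lemma statement already presupposes the truncation functors exist. Granting that, the proof is a short manipulation of the defining triangle and its rotations. I do not anticipate a genuine obstacle here — the ``hard part'' is purely bookkeeping about which adjunction gives an iso on which subcategory, and reducing cleanly from general $n$ to $n=0$.
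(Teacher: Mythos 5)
Your proof is correct: the paper gives no argument of its own for this lemma, simply citing \cite[Proposition 10.1.6]{kashiwara1990sheaves}, and your argument (shift to reduce to $n=0$, then use that $\tau_{\mathbbm{t}}^{\leqslant 0}$ is right adjoint to the fully faithful inclusion of $\mathcal{D}^{\leqslant 0}$ so the counit is an isomorphism on the aisle, plus the rotation/zero-cone argument for part (2)) is exactly the standard proof found in that reference. Nothing further is needed.
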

\begin{lemma}\label{when it's 0}
Let $\mathbbm{t}$ be a bounded t-structure on $\mathcal{T}$ with heart $\mathcal{H}_{t}$ and $n\in \mathbbm{Z}$. Then 
\begin{enumerate}
\item $X=0$ if and only if $H^{i}_{\mathbbm{t}}(X)=0$ for any $i\in \mathbbm{Z}$.
\item $X \in \mathcal{D}^{\leqslant n}$ \emph{(}resp. $X \in \mathcal{D}^{\geqslant n}$\emph{)} if and only if $H^{i}_{\mathbbm{t}}(X)=0$ \emph{(}resp. $H^{i}_{\mathbbm{t}}(X)=0$\emph{)} for any $i\geqslant n+1$ \emph{(}resp. $i\leqslant n-1$\emph{)}.

\end{enumerate}
Thus $X\in \mathcal{H}_{\mathbbm{t}}$ if and only if $H^{0}_{\mathbbm{t}}(X)\cong X$.
\end{lemma}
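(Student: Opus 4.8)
The plan is to prove Lemma \ref{when it's 0} in the stated order, building each part from the previous one using Lemma \ref{trancation} and the boundedness hypothesis. For part (1), the ``only if'' direction is trivial since $H^{i}_{\mathbbm{t}}(0)=0$ for all $i$. For the ``if'' direction, suppose $H^{i}_{\mathbbm{t}}(X)=0$ for all $i\in\mathbbm{Z}$. Since $\mathbbm{t}$ is bounded, $X\in\mathbbm{t}^{b}=\mathcal{D}^{\leqslant n}\cap\mathcal{D}^{\geqslant m}$ for some $m\leqslant n$, so it suffices to show by induction on $n-m$ that $X\in\mathcal{D}^{\leqslant n}\cap\mathcal{D}^{\geqslant n}$ with $H^{i}_{\mathbbm{t}}(X)=0$ for all $i$ forces $X=0$. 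The base case $n-m$ trivially handled, actually the cleanest induction is: from $X\in\mathcal{D}^{\geqslant m}$ and $H^{m}_{\mathbbm{t}}(X)=0$ I want to deduce $X\in\mathcal{D}^{\geqslant m+1}$; iterating, $X\in\mathcal{D}^{\geqslant n+1}$ for all $n$, and combined with $X\in\mathcal{D}^{\leqslant n}$ for some fixed $n$, condition (T2) gives $\Hom(X,X)=0$, hence $X=0$. So the real content is the single step ``$X\in\mathcal{D}^{\geqslant m}$ and $H^{m}_{\mathbbm{t}}(X)=0$ imply $X\in\mathcal{D}^{\geqslant m+1}$,'' which I will prove using the truncation triangle $\tau^{\leqslant m}X\to X\to\tau^{\geqslant m+1}X\to\Sigma\tau^{\leqslant m}X$: here $X\in\mathcal{D}^{\geqslant m}$ means $\tau^{\leqslant m-1}X=0$ by Lemma \ref{trancation}(2), so $\tau^{\leqslant m}X=\Sigma^{-m}H^{m}_{\mathbbm{t}}(X)=0$, hence $X\cong\tau^{\geqslant m+1}X\in\mathcal{D}^{\geqslant m+1}$.

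For part (2), I treat the two cases symmetrically; consider the ``$\mathcal{D}^{\leqslant n}$'' case (the ``$\mathcal{D}^{\geqslant n}$'' case is dual, replacing $X$ by suitable shifts or arguing on the coaisle side). The ``only if'' direction: if $X\in\mathcal{D}^{\leqslant n}$, then by Lemma \ref{trancation}(1) we have $\tau^{\leqslant n}X\cong X$, and then for $i\geqslant n+1$, $H^{i}_{\mathbbm{t}}(X)\cong H^{i}_{\mathbbm{t}}(\tau^{\leqslant n}X)$; since $\tau^{\leqslant n}X\in\mathcal{D}^{\leqslant n}$, Lemma \ref{trancation}(2) gives $\tau^{\geqslant n+1}(\tau^{\leqslant n}X)=0$, and one checks $H^{i}_{\mathbbm{t}}$ factors through this truncation for $i\geqslant n+1$, so it vanishes. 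The ``if'' direction: suppose $H^{i}_{\mathbbm{t}}(X)=0$ for all $i\geqslant n+1$. Consider the truncation triangle $\tau^{\leqslant n}X\to X\to\tau^{\geqslant n+1}X\to\Sigma\tau^{\leqslant n}X$. Applying the cohomological functor $H^{j}_{\mathbbm{t}}$ and using that $H^{j}_{\mathbbm{t}}(\tau^{\leqslant n}X)=0$ for $j\geqslant n+1$ (by the ``only if'' direction just proved) together with the hypothesis $H^{j}_{\mathbbm{t}}(X)=0$ for $j\geqslant n+1$, the long exact sequence forces $H^{j}_{\mathbbm{t}}(\tau^{\geqslant n+1}X)=0$ for all $j$; but also $H^{j}_{\mathbbm{t}}(\tau^{\geqslant n+1}X)=0$ for $j\leqslant n$ automatically since $\tau^{\geqslant n+1}X\in\mathcal{D}^{\geqslant n+1}$. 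By part (1), $\tau^{\geqslant n+1}X=0$, so $X\cong\tau^{\leqslant n}X\in\mathcal{D}^{\leqslant n}$. The dual statement for $\mathcal{D}^{\geqslant n}$ uses the same argument with the roles of the two truncations interchanged.

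The final sentence, ``$X\in\mathcal{H}_{\mathbbm{t}}$ if and only if $H^{0}_{\mathbbm{t}}(X)\cong X$,'' follows by combining the two cases of part (2) with $n=0$: $X\in\mathcal{H}_{\mathbbm{t}}=\mathcal{D}^{\leqslant 0}\cap\mathcal{D}^{\geqslant 0}$ if and only if $H^{i}_{\mathbbm{t}}(X)=0$ for all $i\neq 0$, which by part (1) applied to the cone of the natural map $X\to H^{0}_{\mathbbm{t}}(X)$ (or directly, noting that when only $H^{0}$ survives the truncation triangles collapse to give $X\cong\tau^{\leqslant 0}\tau^{\geqslant 0}X=H^{0}_{\mathbbm{t}}(X)$) is equivalent to $H^{0}_{\mathbbm{t}}(X)\cong X$. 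I expect the main obstacle to be purely bookkeeping: verifying cleanly that $H^{i}_{\mathbbm{t}}$ commutes with the truncation functors in the ranges needed (e.g. $H^{i}_{\mathbbm{t}}\tau^{\leqslant n}\cong H^{i}_{\mathbbm{t}}$ for $i\leqslant n$ and vanishes for $i>n$), which is standard but must be invoked carefully; no genuinely hard step arises, as everything reduces to the octahedral/long-exact-sequence formalism of \cite{beilinson1982perverse} together with Lemma \ref{trancation}.
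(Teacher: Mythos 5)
Your proposal is correct, and for the main content of part (2) it follows the same route as the paper: the ``only if'' direction comes from the vanishing of $H^{i}_{\mathbbm{t}}$ on the aisle (the paper phrases this as $\Sigma^{i}X\in\mathcal{D}^{\leqslant -1}$, hence $H^{0}_{\mathbbm{t}}(\Sigma^{i}X)=0$; your version via truncations is the same computation, and note that Lemma \ref{trancation}(2) already gives $\tau_{\mathbbm{t}}^{\geqslant i}X=0$ for $i\geqslant n+1$ directly, so the ``one checks'' step about compatibility of $H^{i}_{\mathbbm{t}}$ with truncation is not really needed), and the ``if'' direction uses exactly the paper's argument: apply the cohomological functor to the truncation triangle, conclude $H^{j}_{\mathbbm{t}}(\tau^{\geqslant n+1}X)=0$ for all $j$, and invoke part (1) to kill $\tau^{\geqslant n+1}X$. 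Where you genuinely diverge is part (1): the paper simply cites \cite[Lemma 2.4]{chen2022extensions}, whereas you give a self-contained proof, iterating the step ``$X\in\mathcal{D}^{\geqslant m}$ and $H^{m}_{\mathbbm{t}}(X)=0$ imply $\tau^{\leqslant m}X\cong\Sigma^{-m}H^{m}_{\mathbbm{t}}(X)=0$, so $X\in\mathcal{D}^{\geqslant m+1}$'' and then using boundedness together with (T2) to get $\id_{X}=0$; this is a correct and pleasantly elementary argument that makes the lemma independent of the external reference, at the cost of a little extra length. One small blemish: in the last paragraph there is no natural map $X\to H^{0}_{\mathbbm{t}}(X)$ in general (the natural morphisms go $\tau^{\leqslant 0}X\to X$ and $X\to\tau^{\geqslant 0}X$), but this is harmless because the direct argument you give in the same parenthesis — when all $H^{i}_{\mathbbm{t}}(X)$ with $i\neq 0$ vanish, part (2) puts $X$ in the heart and the truncation triangles collapse to $X\cong\tau^{\leqslant 0}\tau^{\geqslant 0}X=H^{0}_{\mathbbm{t}}(X)$, while conversely $H^{0}_{\mathbbm{t}}(X)\cong X$ forces $X\in\mathcal{H}_{\mathbbm{t}}$ since $H^{0}_{\mathbbm{t}}$ takes values in the heart — is complete and is all that is needed.
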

\begin{proof}
By \cite[Lemma 2.4]{chen2022extensions}, (1) holds. Here we only prove the first statement in (2). Let $X \in \mathcal{D}^{\leqslant n}$ and $i\geqslant n+1$. Then $\Sigma^{i}X\in \mathcal{D}^{\leqslant -1}$. Hence 
$$H^{i}_{\mathbbm{t}}(X)=H_{\mathbbm{t}}^{0}(\Sigma^{i} X)= 0.$$
For the other implication, we suppose that $X\in \mathcal{T}$ is satisfying $H^{i}_{\mathbbm{t}}(X)=0$. There is a distinguished triangle:
$$X^{\leqslant n}\longrightarrow X \longrightarrow X^{\geqslant n+1} \longrightarrow \Sigma X^{\leqslant n}.$$
By taking the cohomology functor we get the exact sequence in $\mathcal{H}_{\mathbbm{t}}$
$$H^{i}_{\mathbbm{t}}(X^{\leqslant n}) \longrightarrow H^{i}_{\mathbbm{t}}(X) \longrightarrow H^{i}_{\mathbbm{t}}(X^{\geqslant n+1})\longrightarrow H^{i+1}_{\mathbbm{t}}(X^{\leqslant n}).$$
Lemma \ref{trancation} implies that $H^{i}_{\mathbbm{t}}(X^{\leqslant n})=0$. Hence $H^{i}_{\mathbbm{t}}(X)=H^{i}_{\mathbbm{t}}(X^{\geqslant n+1})$. It follows by assumption that $H^{i}_{\mathbbm{t}}(X^{\geqslant n+1})=0$. We also know $H^{i}_{\mathbbm{t}}(X^{\geqslant n+1})=0$ for any $i\leqslant n$ by Lemma \ref{trancation}. Hence $H^{i}_{\mathbbm{t}}(X^{\geqslant n+1})=0$ for all $i\in \mathbbm{Z}$, and $X^{\geqslant n+1}=0$ by (1), which implies $X\cong X^{\leqslant n} \in \mathcal{D}^{\leqslant n}$.
\end{proof}

\subsection{Monoidal t-structures}\label{monoidal_t_str}Until the end of this section, we fix a non-zero monoidal triangulated category $(\mathcal{T},\otimes,\Sigma,\mathbbm{1})$.

\begin{definition}\label{definition of monoidal t-structure}
A bounded t-structure $\mathbbm{t}=(\mathcal{D}^{\leqslant 0},\mathcal{D}^{\geqslant 1})$ on $\mathcal{T}$ is called a \textbf{monoidal t-structure} if there exists $n \in \mathbbm{Z}$ such that 
\begin{enumerate}
\item $\mathcal{D}^{\leqslant 0} \otimes \mathcal{D}^{\leqslant n} \subseteq \mathcal{D}^{\leqslant 0}$;
\item $\mathcal{D}^{\geqslant 0} \otimes \mathcal{D}^{\geqslant n} \subseteq \mathcal{D}^{\geqslant 0}$.
\end{enumerate}
The set of integers $n$ satisfying conditions (1) and (2) is called the \textbf{deviation} of $\mathbbm{t}$ and is denoted by $\text{dev}(\mathbbm{t})$. 
\end{definition}

Our definition of monoidal t-structures is different from, but motivated by Zhang 
and Zhou's definition of monoidal triangulated t-structures (mtt-structures for short) in \cite{zhang2022frobenius}. We later discovered that if $0 \in \text{dev}(\mathbbm{t})$ our definition is equal to the definition of compatible bounded t-structures in \cite{biglari2007kunneth}.

\begin{lemma}\label{shift yy}
Let $\mathbbm{t}$ be a monoidal t-structure on $\mathcal{T}$. For any $k\in \mathbbm{Z}$, $\Sigma^{-k}\mathbbm{t}$ is also a monoidal t-structure. Moreover, if $n \in \textnormal{\text{dev}}(\mathbbm{t})$, then $n-k \in \textnormal{\text{dev}}(\Sigma^{-k}\mathbbm{t})$ for any $k\in \mathbbm{Z}$.
\end{lemma}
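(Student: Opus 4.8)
The plan is to verify the two defining conditions of a monoidal t-structure directly, using the definition $\Sigma^{-k}\mathbbm{t} = (\mathcal{D}^{\leqslant k}, \mathcal{D}^{\geqslant k+1})$ together with the recorded fact (from \cite[Remark 10.1.2]{kashiwara1990sheaves}) that $\Sigma^{-k}\mathbbm{t}$ is already a bounded t-structure. So the only thing to check is the tensor compatibility. Write $\mathbbm{t} = (\mathcal{D}^{\leqslant 0}, \mathcal{D}^{\geqslant 1})$ and fix $n \in \text{dev}(\mathbbm{t})$, so that $\mathcal{D}^{\leqslant 0} \otimes \mathcal{D}^{\leqslant n} \subseteq \mathcal{D}^{\leqslant 0}$ and $\mathcal{D}^{\geqslant 0} \otimes \mathcal{D}^{\geqslant n} \subseteq \mathcal{D}^{\geqslant 0}$.

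First I would record the elementary bookkeeping lemma that for any $a, b \in \mathbbm{Z}$ one has $\mathcal{D}^{\leqslant a} \otimes \mathcal{D}^{\leqslant b} \subseteq \mathcal{D}^{\leqslant a+b-n}$ and dually $\mathcal{D}^{\geqslant a} \otimes \mathcal{D}^{\geqslant b} \subseteq \mathcal{D}^{\geqslant a+b-n}$. This follows from the $n \in \text{dev}(\mathbbm{t})$ hypothesis by applying the shift isomorphisms $e_{X,Y}\colon \Sigma(X)\otimes Y \cong \Sigma(X\otimes Y)$ and $\theta_{X,Y}\colon X\otimes\Sigma(Y) \cong \Sigma(X\otimes Y)$ repeatedly: if $X \in \mathcal{D}^{\leqslant a}$ then $\Sigma^{a}X \in \mathcal{D}^{\leqslant 0}$, if $Y \in \mathcal{D}^{\leqslant b}$ then $\Sigma^{b-n}Y \in \mathcal{D}^{\leqslant n}$ (since $\mathcal{D}^{\leqslant b} = \Sigma^{-b}\mathcal{D}^{\leqslant 0} \subseteq \Sigma^{-b}\mathcal{D}^{\leqslant 0}$ and $\mathcal{D}^{\leqslant n} = \Sigma^{-n}\mathcal{D}^{\leqslant 0}$, so $\Sigma^{b}Y \in \mathcal{D}^{\leqslant 0}$ gives $\Sigma^{b-n}Y \in \Sigma^{-n}\mathcal{D}^{\leqslant 0} = \mathcal{D}^{\leqslant n}$), hence $\Sigma^{a+b-n}(X\otimes Y) \cong (\Sigma^a X)\otimes(\Sigma^{b-n}Y) \in \mathcal{D}^{\leqslant 0}$, i.e. $X\otimes Y \in \mathcal{D}^{\leqslant a+b-n}$. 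The dual inclusion is identical with $\geqslant$ in place of $\leqslant$.

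Then I would simply apply this with the appropriate indices. For condition (1) of Definition \ref{definition of monoidal t-structure} applied to $\Sigma^{-k}\mathbbm{t}$, I need to find $m$ with $\mathcal{D}^{\leqslant k} \otimes \mathcal{D}^{\leqslant m} \subseteq \mathcal{D}^{\leqslant k}$; by the bookkeeping lemma $\mathcal{D}^{\leqslant k}\otimes \mathcal{D}^{\leqslant m} \subseteq \mathcal{D}^{\leqslant k+m-n}$, so taking $m = n-k$ gives the right-hand side $\mathcal{D}^{\leqslant k}$. Likewise, since the coaisle of $\Sigma^{-k}\mathbbm{t}$ in the $\geqslant 0$ convention is $\mathcal{D}^{\geqslant k}$, condition (2) requires $\mathcal{D}^{\geqslant k}\otimes \mathcal{D}^{\geqslant m} \subseteq \mathcal{D}^{\geqslant k}$, and again $\mathcal{D}^{\geqslant k}\otimes\mathcal{D}^{\geqslant(n-k)} \subseteq \mathcal{D}^{\geqslant k+(n-k)-n} = \mathcal{D}^{\geqslant k}$ works with the same value $m = n-k$. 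Hence $\Sigma^{-k}\mathbbm{t}$ is a monoidal t-structure and $n-k \in \text{dev}(\Sigma^{-k}\mathbbm{t})$, which is exactly the claim.

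The only mild subtlety — and the one place I would be careful in writing it out — is keeping the index arithmetic on the aisles/coaisles straight, in particular remembering that $\Sigma^{-k}\mathbbm{t}$ has aisle $\mathcal{D}^{\leqslant k}$ and using $\mathcal{D}^{\geqslant 0} = \mathcal{D}^{\geqslant 1}$ shifted, i.e. the heart-side coaisle $\mathcal{D}^{\geqslant 0}$ shifts to $\mathcal{D}^{\geqslant k}$; beyond that there is no real obstacle, as everything reduces to naturality of the shift isomorphisms $e_{-,-}$ and $\theta_{-,-}$, which we do not even need to commute with each other since only the underlying set-level inclusions of subcategories are used.
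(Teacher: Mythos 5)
Your overall route is the same as the paper's: the paper's proof simply shifts the defining inclusions, observing that $\mathcal{D}^{\leqslant k}\otimes\mathcal{D}^{\leqslant n}\subseteq\mathcal{D}^{\leqslant k}$ and $\mathcal{D}^{\geqslant k}\otimes\mathcal{D}^{\geqslant n}\subseteq\mathcal{D}^{\geqslant k}$ follow from the case $k=0$ via the tensor--shift isomorphisms, and your bookkeeping lemma $\mathcal{D}^{\leqslant a}\otimes\mathcal{D}^{\leqslant b}\subseteq\mathcal{D}^{\leqslant a+b-n}$ (with its dual) is a correct, slightly more general formulation of exactly that observation, proved the right way. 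The problem is in the final application, and it is precisely the index arithmetic you said you would be careful about. The deviation of $\Sigma^{-k}\mathbbm{t}$ is measured against its \emph{own} aisles: since the $m$-th aisle (resp.\ coaisle) of $\Sigma^{-k}\mathbbm{t}$ is $\Sigma^{-m}\mathcal{D}^{\leqslant k}=\mathcal{D}^{\leqslant k+m}$ (resp.\ $\mathcal{D}^{\geqslant k+m}$) in the original indexing, the condition for $m\in\mathrm{dev}(\Sigma^{-k}\mathbbm{t})$ is $\mathcal{D}^{\leqslant k}\otimes\mathcal{D}^{\leqslant k+m}\subseteq\mathcal{D}^{\leqslant k}$ and $\mathcal{D}^{\geqslant k}\otimes\mathcal{D}^{\geqslant k+m}\subseteq\mathcal{D}^{\geqslant k}$. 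So to show $n-k\in\mathrm{dev}(\Sigma^{-k}\mathbbm{t})$ you must check $\mathcal{D}^{\leqslant k}\otimes\mathcal{D}^{\leqslant n}\subseteq\mathcal{D}^{\leqslant k}$, i.e.\ apply your lemma with $(a,b)=(k,n)$, which indeed gives $\mathcal{D}^{\leqslant k+n-n}=\mathcal{D}^{\leqslant k}$, and dually on the coaisle side. As written, however, you take the second factor to be $\mathcal{D}^{\leqslant m}$ with $m=n-k$ in the original indexing; then your lemma gives containment in $\mathcal{D}^{\leqslant k+(n-k)-n}=\mathcal{D}^{\leqslant 0}$, not $\mathcal{D}^{\leqslant k}$ as you assert (and $\mathcal{D}^{\leqslant 0}$ need not be contained in $\mathcal{D}^{\leqslant k}$ when $k<0$); moreover the inclusion $\mathcal{D}^{\leqslant k}\otimes\mathcal{D}^{\leqslant n-k}\subseteq\mathcal{D}^{\leqslant k}$ is in any case not the statement that $n-k$ lies in the deviation. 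This is a fixable bookkeeping slip rather than a missing idea: once the second factor is corrected to $\mathcal{D}^{\leqslant n}$ (resp.\ $\mathcal{D}^{\geqslant n}$), your argument is complete and coincides with the paper's two-line proof.
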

\begin{proof}
Since $\mathbbm{t}$ is a monoidal t-structure, there exist $n \in \text{dev}(\mathbbm{t})$ such that 
$$\mathcal{D}^{\leqslant 0} \otimes \mathcal{D}^{\leqslant n} \subseteq \mathcal{D}^{\leqslant 0} \;\text{and}\; \mathcal{D}^{\geqslant 0} \otimes \mathcal{D}^{\geqslant n} \subseteq \mathcal{D}^{\geqslant 0}.$$
Hence for any $k\in \mathbbm{Z}$ 
$$\mathcal{D}^{\leqslant k} \otimes \mathcal{D}^{\leqslant k+n-k} \subseteq \mathcal{D}^{\leqslant k} \;\text{and}\; \mathcal{D}^{\geqslant k} \otimes \mathcal{D}^{\geqslant k+n-k} \subseteq \mathcal{D}^{\geqslant k},$$
which means that $\Sigma^{-k}\mathbbm{t}=(\mathcal{D}^{\leqslant k},\mathcal{D}^{\geqslant k+1})$ is also a monoidal t-structure with $n-k \in \text{dev}(\Sigma^{-k}\mathbbm{t})$.
\end{proof}

Due to Lemma \ref{shift yy}, even if the deviation of a monoidal t-structure $\mathbbm{t}$ may not contain $0$, we can always find an integer $k$ such that $0 \in \text{dev}(\Sigma^{-k}\mathbbm{t})$. Hence in the following we will always assume that $0 \in \text{dev}(\mathbbm{t})$. With the help of this assumption, we are free to apply the following K{\"{u}}nneth formula in a monoidal triangulated category.

\begin{theorem}\textnormal{(\cite[Theorem 4.1]{biglari2007kunneth})}\label{K.lemma}
Let $\mathcal{T}$ be a monoidal triangulated category and $\mathbbm{t}$ be a monoidal t-structure on $\mathcal{T}$ with $0 \in \textnormal{\text{dev}}(\mathbbm{t})$. Suppose that $X, Y\in\mathcal{T}$ and $p,q,n\in \mathbbm{Z}$ with $p+q=n$. Then there is an isomorphism 
\[ \coprod_{p+q=n}H^{p}_{\mathbbm{t}}(X) \otimes H^{q}_{\mathbbm{t}}(Y) \stackrel{\cong}{\longrightarrow} H^{n}_{\mathbbm{t}}(X\otimes Y)\]
which is natural in $X$, $Y\in \mathcal{T}$.
\end{theorem}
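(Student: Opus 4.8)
The plan is to construct the Künneth isomorphism directly from the truncation functors of $\mathbbm{t}$, mimicking the classical proof of the Künneth formula for chain complexes, but staged through the triangulated structure. First I would reduce to a ``bounded'' situation: since $\mathbbm{t}$ is bounded, every object $X$ admits a finite filtration by distinguished triangles whose subquotients are shifts of objects of the heart $\mathcal{H}_{\mathbbm{t}}$, so I may build the map by d\'evissage on the length of such filtrations. Concretely, for a fixed $Y$, consider the two functors $X \mapsto \coprod_{p+q=n} H^{p}_{\mathbbm{t}}(X)\otimes H^{q}_{\mathbbm{t}}(Y)$ and $X\mapsto H^{n}_{\mathbbm{t}}(X\otimes Y)$ from $\mathcal{T}$ to $\mathcal{H}_{\mathbbm{t}}$. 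Using that $\otimes$ is exact in each variable and that $H^{\bullet}_{\mathbbm{t}}$ is a cohomological functor, both are ``cohomological-like'' in $X$ (they send triangles to long exact sequences, once one fixes all relevant degrees), so it suffices to define a natural transformation between them and check it is an isomorphism when $X\in\mathcal{H}_{\mathbbm{t}}$, then propagate along triangles by the five lemma.

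The construction of the natural map itself is where the deviation hypothesis $0\in\mathrm{dev}(\mathbbm{t})$ enters. Given $X$, pick the truncation triangle $\tau^{\leqslant p}X \to X \to \tau^{\geqslant p+1}X \to \Sigma\tau^{\leqslant p}X$; tensoring with $\tau^{\leqslant q}Y$ (or the analogous truncation of $Y$) and using $\mathcal{D}^{\leqslant 0}\otimes\mathcal{D}^{\leqslant 0}\subseteq\mathcal{D}^{\leqslant 0}$ together with the dual inclusion, one sees that the tensor product of pieces sitting in the ``right'' cohomological degrees again sits in controlled degrees. This lets one identify $H^{n}_{\mathbbm{t}}(X\otimes Y)$ with a sum indexed by the pairs $(p,q)$ with $p+q=n$ of the contributions $H^{n}_{\mathbbm{t}}\big((\Sigma^{-p}H^{p}_{\mathbbm{t}}(X))\otimes(\Sigma^{-q}H^{q}_{\mathbbm{t}}(Y))\big)$, and the last group is $H^{0}_{\mathbbm{t}}(H^{p}_{\mathbbm{t}}(X)\otimes H^{q}_{\mathbbm{t}}(Y))$ once we know that the tensor product of two heart objects again lies in the heart. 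That closure statement --- $\mathcal{H}_{\mathbbm{t}}\otimes\mathcal{H}_{\mathbbm{t}}\subseteq\mathcal{H}_{\mathbbm{t}}$ --- follows immediately from $\mathcal{H}_{\mathbbm{t}}=\mathcal{D}^{\leqslant 0}\cap\mathcal{D}^{\geqslant 0}$ and conditions (1),(2) of Definition \ref{definition of monoidal t-structure} with $n=0$; and naturality is automatic because everything is assembled from the functorial truncation triangles.

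I would organize the induction in two layers: first fix $Y\in\mathcal{H}_{\mathbbm{t}}$ and induct on the filtration length of $X$ to get the formula for all $X$; then, for arbitrary $X$, fix that $X$ and induct on the filtration length of $Y$. At the base of each induction ($X,Y$ both in the heart, each a shift), the statement is exactly the tautology $H^{0}_{\mathbbm{t}}(X\otimes Y)=X\otimes Y$ (Lemma \ref{when it's 0}) matched against the single surviving summand $p=q=0$ with $H^{i}_{\mathbbm{t}}=0$ for $i\neq 0$ on heart objects; all other summands vanish because, e.g., $H^{p}_{\mathbbm{t}}(X)=0$ for $p\neq 0$. The inductive step is a diagram chase: given a triangle $X'\to X\to X''\to\Sigma X'$, tensor with $Y$, apply the long exact sequence for $H^{\bullet}_{\mathbbm{t}}$ to both sides, and invoke the five lemma; the compatibility of the connecting maps with the proposed isomorphism is the one genuinely fiddly point and needs the sign/shift isomorphisms $e_{-,-}$, $\theta_{-,-}$ to be handled carefully.

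The main obstacle I expect is precisely this bookkeeping of shifts and connecting morphisms: the isomorphisms $e_{X,Y}\colon\Sigma X\otimes Y\cong\Sigma(X\otimes Y)$ and $\theta_{X,Y}\colon X\otimes\Sigma Y\cong\Sigma(X\otimes Y)$ must be threaded through the d\'evissage so that the identification of $H^{n}_{\mathbbm{t}}(X\otimes Y)$ with $\coprod_{p+q=n}H^{p}_{\mathbbm{t}}(X)\otimes H^{q}_{\mathbbm{t}}(Y)$ is compatible with the boundary maps in the relevant long exact sequences --- otherwise the five lemma produces an isomorphism of objects but not a \emph{natural} one, and naturality in both variables is part of the claim. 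Everything else --- closure of the heart under $\otimes$, exactness of $\otimes$, existence of finite heart-filtrations --- is either immediate from the hypotheses or standard for bounded t-structures. (Of course, since this is quoted as \cite[Theorem 4.1]{biglari2007kunneth}, one may alternatively just cite it; but the above is how I would reprove it in the present language.)
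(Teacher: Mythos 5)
The paper does not actually prove Theorem \ref{K.lemma}: it is imported verbatim from \cite[Theorem 4.1]{biglari2007kunneth}, whose notion of a compatible bounded t-structure is exactly the condition $0\in\mathrm{dev}(\mathbbm{t})$ (as the paper itself notes after Definition \ref{definition of monoidal t-structure}). So there is no in-paper argument to compare yours against; the comparison is with the cited source, and your d\'evissage outline is essentially the standard route taken there: check the statement on heart objects, then propagate along truncation triangles in each variable using that both sides are cohomological and the five lemma. Two remarks on your sketch. First, to run the induction you need that $-\otimes D$ and $D\otimes -$ are exact on $\mathcal{H}_{\mathbbm{t}}$ for $D\in\mathcal{H}_{\mathbbm{t}}$; you invoke this in passing, but you cannot quote Proposition \ref{yy main1} for it, since in the paper that proposition is deduced \emph{from} Theorem \ref{K.lemma}. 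It is not circular, though: a short exact sequence in the heart gives a triangle, tensoring with $D$ gives a triangle of heart objects (closure of the heart under $\otimes$ is condition (1),(2) with $n=0$), and the long exact sequence of $H^{\bullet}_{\mathbbm{t}}$ together with $H^{\pm 1}_{\mathbbm{t}}=0$ on heart objects yields exactness — this should be made explicit before the induction. Second, the point you flag as ``fiddly'' is in fact the main content: one must construct a single natural transformation $\coprod_{p+q=n}H^{p}_{\mathbbm{t}}(X)\otimes H^{q}_{\mathbbm{t}}(Y)\to H^{n}_{\mathbbm{t}}(X\otimes Y)$ defined on all of $\mathcal{T}\times\mathcal{T}$ and compatible with the connecting morphisms and the shift isomorphisms $e_{-,-}$, $\theta_{-,-}$; an identification produced object-by-object from chosen heart filtrations gives isomorphisms but not, by itself, naturality in $X$ and $Y$, which is part of the statement and is what the applications (e.g.\ Proposition \ref{yy main1}) use. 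That construction is precisely the technical work carried out in \cite{biglari2007kunneth}, so your proposal is a correct strategy whose unfinished step coincides with the substance of the cited proof.
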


\begin{proposition}\label{yy main1}
Let $\mathbbm{t}$ be a monoidal t-structure on $\mathcal{T}$ with $0 \in \textnormal{\text{dev}}(\mathbbm{t})$. Then $\mathcal{H}_{\mathbbm{t}}$ is a monoidal abelian category with a biexact tensor product, whose unit is $H^{0}_{\mathbbm{t}}(\mathbbm{1})$. 

\end{proposition}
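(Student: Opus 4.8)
The plan is to use the Künneth formula (Theorem~\ref{K.lemma}) to transport the tensor product of $\mathcal{T}$ to a bifunctor on $\mathcal{H}_{\mathbbm{t}}$. Concretely, for objects $M, N \in \mathcal{H}_{\mathbbm{t}}$ I would define $M \otimes_{\mathcal{H}} N := H^{0}_{\mathbbm{t}}(M \otimes N)$, where on the right $M,N$ are viewed as objects of $\mathcal{T}$ via the inclusion $\mathcal{H}_{\mathbbm{t}} \hookrightarrow \mathcal{T}$. On morphisms one uses functoriality of $\otimes$ in $\mathcal{T}$ followed by functoriality of $H^{0}_{\mathbbm{t}}$. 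First I would record that this $\otimes_{\mathcal{H}}$ is $\mathbbm{k}$-bilinear on morphisms, which is immediate since $\otimes$ on $\mathcal{T}$ is bilinear (monoidal triangulated categories have $\otimes$ exact, hence additive, in each variable) and $H^{0}_{\mathbbm{t}}$ is additive.

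The main point is biexactness. Fix $N \in \mathcal{H}_{\mathbbm{t}}$ and take a short exact sequence $0 \to M' \to M \to M'' \to 0$ in $\mathcal{H}_{\mathbbm{t}}$. This lifts to a distinguished triangle $M' \to M \to M'' \to \Sigma M'$ in $\mathcal{T}$ (the standard fact that short exact sequences in the heart come from triangles). Applying the exact functor $- \otimes N$ on $\mathcal{T}$ gives a distinguished triangle $M' \otimes N \to M \otimes N \to M'' \otimes N \to \Sigma(M' \otimes N)$, and then the cohomological functor $H^{\bullet}_{\mathbbm{t}}$ produces a long exact sequence in $\mathcal{H}_{\mathbbm{t}}$. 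Now the key computation: since $M', M, M''$ lie in $\mathcal{H}_{\mathbbm{t}}$ and $0 \in \text{dev}(\mathbbm{t})$, conditions (1) and (2) of Definition~\ref{definition of monoidal t-structure} force $M \otimes N \in \mathcal{D}^{\leqslant 0} \cap \mathcal{D}^{\geqslant 0} = \mathcal{H}_{\mathbbm{t}}$, and likewise for $M', M''$; equivalently, by the Künneth formula $H^{n}_{\mathbbm{t}}(M \otimes N) \cong \coprod_{p+q=n} H^{p}_{\mathbbm{t}}(M) \otimes_{\mathcal{H}} H^{q}_{\mathbbm{t}}(N)$ vanishes for $n \neq 0$ because $H^{p}_{\mathbbm{t}}(M)$ and $H^{q}_{\mathbbm{t}}(N)$ vanish unless $p = q = 0$. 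Hence the long exact sequence collapses to $0 \to H^{0}_{\mathbbm{t}}(M' \otimes N) \to H^{0}_{\mathbbm{t}}(M \otimes N) \to H^{0}_{\mathbbm{t}}(M'' \otimes N) \to 0$, i.e. $0 \to M' \otimes_{\mathcal{H}} N \to M \otimes_{\mathcal{H}} N \to M'' \otimes_{\mathcal{H}} N \to 0$ is exact. The argument in the second variable is symmetric.

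Next I would check that $(\mathcal{H}_{\mathbbm{t}}, \otimes_{\mathcal{H}}, H^{0}_{\mathbbm{t}}(\mathbbm{1}))$ is a monoidal category. The associativity constraint is obtained from the associativity constraint of $\mathcal{T}$ together with the Künneth isomorphism: both $(M \otimes_{\mathcal{H}} N) \otimes_{\mathcal{H}} P$ and $M \otimes_{\mathcal{H}} (N \otimes_{\mathcal{H}} P)$ are naturally identified with $H^{0}_{\mathbbm{t}}(M \otimes N \otimes P)$ — here one uses that $M \otimes N$ and $N \otimes P$ already lie in $\mathcal{H}_{\mathbbm{t}}$, so the inner $H^{0}_{\mathbbm{t}}$ is harmless (more precisely, the counit/unit of the truncation adjunctions restricted to the heart is an isomorphism, by Lemma~\ref{when it's 0}). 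For the unit: $\mathbbm{1}$ need not lie in $\mathcal{H}_{\mathbbm{t}}$, so $H^{0}_{\mathbbm{t}}(\mathbbm{1})$ is the right candidate, and the left unit constraint $H^{0}_{\mathbbm{t}}(\mathbbm{1}) \otimes_{\mathcal{H}} M = H^{0}_{\mathbbm{t}}(H^{0}_{\mathbbm{t}}(\mathbbm{1}) \otimes M) \xrightarrow{\cong} H^{0}_{\mathbbm{t}}(\mathbbm{1} \otimes M) \xrightarrow{\cong} H^{0}_{\mathbbm{t}}(M) = M$ follows, the first map being an isomorphism by applying the natural Künneth isomorphism to the triangle relating $\mathbbm{1}$ to its truncations and using that the positive/negative cohomologies of $\mathbbm{1}$ tensored with $M$ contribute only in degree $0$ — alternatively, directly from naturality of Künneth applied to the truncation triangle $\tau^{\leqslant -1}\mathbbm{1} \to \mathbbm{1} \to \tau^{\geqslant 0}\mathbbm{1}$ and its further truncation. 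Finally, the pentagon and triangle axioms for $\mathcal{H}_{\mathbbm{t}}$ are inherited from those in $\mathcal{T}$ via the naturality of the Künneth isomorphism, since every structure morphism in sight is built functorially from the corresponding one in $\mathcal{T}$.

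The step I expect to be the main obstacle is the coherence bookkeeping for associativity and unitality: one must verify that the Künneth isomorphism is compatible with the associativity constraint of $\mathcal{T}$ (so that the two ways of computing $H^0_{\mathbbm{t}}$ of a triple tensor product agree as functors, including with all the naturality squares), and that iterated applications of $H^0_{\mathbbm{t}}$ to objects already in the heart can be stripped off coherently. This is where the naturality clause in Theorem~\ref{K.lemma} does the real work, and where one should be careful rather than routine; the exactness statement itself, by contrast, is a short diagram chase once the vanishing $H^{n}_{\mathbbm{t}}(M \otimes N) = 0$ for $n \neq 0$ is in hand.
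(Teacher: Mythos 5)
Your argument is correct and essentially reproduces the paper's proof: the condition $0\in\textnormal{dev}(\mathbbm{t})$ forces $\mathcal{H}_{\mathbbm{t}}\otimes\mathcal{H}_{\mathbbm{t}}\subseteq\mathcal{D}^{\leqslant 0}\cap\mathcal{D}^{\geqslant 0}=\mathcal{H}_{\mathbbm{t}}$, and Theorem \ref{K.lemma} combined with Lemma \ref{when it's 0} identifies $H^{0}_{\mathbbm{t}}(\mathbbm{1})$ as the unit, exactly as in the paper. The only difference is presentational: since, as you observe mid-proof, $M\otimes N$ already lies in the heart, the paper simply takes the tensor product on $\mathcal{H}_{\mathbbm{t}}$ to be the restriction of $\otimes$ from $\mathcal{T}$ rather than $H^{0}_{\mathbbm{t}}(-\otimes-)$, so associativity, the pentagon and biexactness are inherited verbatim from $\mathcal{T}$ and the coherence bookkeeping you flag as the main obstacle never arises.
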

\begin{proof}
Let $X \in \mathcal{H}_{\mathbbm{t}}$. According to Theorem \ref{K.lemma}, there is an isomorphism
\[\coprod_{p+q=0} H^{p}_{\mathbbm{t}}(\mathbbm{1}) \otimes H^{q}_{\mathbbm{t}}(X) \stackrel{\cong}\longrightarrow H^{0}_{\mathbbm{t}}(\mathbbm{1}\otimes X).\; \]
By Lemma \ref{when it's 0}, $H^{0}_{\mathbbm{t}}(X)\cong X$ and $H^{q}_{\mathbbm{t}}(X)=0$ for $q\neq 0$. Then
\[H^{0}_{\mathbbm{t}}(\mathbbm{1}) \otimes H^{0}_{\mathbbm{t}}(X) \stackrel{\cong}\longrightarrow H^{0}_{\mathbbm{t}}(\mathbbm{1}\otimes X).\; \]
Similarly, there exists an isomorphism
\[H^{0}_{\mathbbm{t}}(X) \otimes H^{0}_{\mathbbm{t}}(\mathbbm{1}) \stackrel{\cong}\longrightarrow H^{0}_{\mathbbm{t}}( X \otimes \mathbbm{1}).\]
Hence we obtain that
\begin{equation*}
\begin{split}
X \cong H^{0}_{\mathbbm{t}}(X) \cong H^{0}_{\mathbbm{t}}(\mathbbm{1} \otimes X)
\cong H^{0}_{\mathbbm{t}}(\mathbbm{1}) \otimes H^{0}_{\mathbbm{t}}(X) \cong H^{0}_{\mathbbm{t}}(\mathbbm{1}) \otimes X,
\end{split}
\end{equation*}
which is natural in $X$. In the same way, there is a natural isomorphism $X\cong X\otimes H^{0}_{\mathbbm{t}}(\mathbbm{1})$. Since $\mathcal{H}_{\mathbbm{t}}$ is closed under the tensor functor, it is a monoidal abelian category. Moreover, the exactness of the tensor bifunctor is inherited.
\end{proof}

\begin{remark}
For the rest of this paper, when we talk about a monoidal structure on a heart $\mathcal{H}_{\mathbbm{t}}$ where $\mathbbm{t}$ is a monoidal t-structure with $0\in \text{dev}(\mathbbm{t})$, this monoidal structure is inherited from the monoidal structure on $\mathcal{T}$. In addition, $H^{0}_{\mathbbm{t}}(\mathbbm{1}) \neq 0$. Otherwise for any $X \in \mathcal{H}_{\mathbbm{t}}$, $X \cong H^{0}_{\mathbbm{t}}(\mathbbm{1}) \otimes X \cong 0$, hence $\mathcal{H}_{\mathbbm{t}}=0$, which is impossible for a bounded t-structure on $\mathcal{T}$.
\end{remark}

Before proceeding further, we discuss the size of the deviation of a monoidal t-structure $\mathbbm{t}$. Since the unit in the heart of a monoidal t-structure is non-zero, there is not much choice for the deviation. 

\begin{proposition}\label{unique dev}
Let $\mathbbm{t}$ be a monoidal t-structure on $\mathcal{T}$ with $0 \in \textnormal{\text{dev}}(\mathbbm{t})$. Then $\textnormal{\text{dev}}(\mathbbm{t})=\{0\}$.
\end{proposition}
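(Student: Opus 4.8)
The plan is to show that no non-zero integer can belong to $\textnormal{dev}(\mathbbm{t})$ once we know $0$ does. Suppose for contradiction that some $n \in \textnormal{dev}(\mathbbm{t})$ with $n \neq 0$; by symmetry (replacing $n$ by $-n$ interchanges the two conditions' roles up to relabelling, or one simply argues the two cases separately) it suffices to handle, say, $n > 0$. The key observation is that $H^{0}_{\mathbbm{t}}(\mathbbm{1})$ is a non-zero object of the heart (as recorded in the Remark following Proposition \ref{yy main1}), and that by the K\"unneth formula of Theorem \ref{K.lemma}, applied with $0 \in \textnormal{dev}(\mathbbm{t})$, tensoring with $\mathbbm{1}$ on the heart is the identity functor up to natural isomorphism, via $X \cong H^{0}_{\mathbbm{t}}(\mathbbm{1}) \otimes X$. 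So $H^{0}_{\mathbbm{t}}(\mathbbm{1})$ is a tensor unit on $\mathcal{H}_{\mathbbm{t}}$.

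First I would exploit condition (1) of $n \in \textnormal{dev}(\mathbbm{t})$, namely $\mathcal{D}^{\leqslant 0} \otimes \mathcal{D}^{\leqslant n} \subseteq \mathcal{D}^{\leqslant 0}$, together with the fact that $\mathbbm{1} \in \mathcal{H}_{\mathbbm{t}} \subseteq \mathcal{D}^{\leqslant 0}$. Pick any object $Y \in \mathcal{D}^{\leqslant n}$ sitting in cohomological degree exactly $n$, for instance $Y = \Sigma^{-n} Z$ for $Z \in \mathcal{H}_{\mathbbm{t}}$ with $Z \neq 0$ (such $Z$ exists, e.g. $Z = H^{0}_{\mathbbm{t}}(\mathbbm{1})$). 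Then $\mathbbm{1} \otimes Y \in \mathcal{D}^{\leqslant 0}$. On the other hand, using the natural isomorphisms $e_{-,-}$ (or $\theta_{-,-}$) relating $\Sigma$ and $\otimes$, we have $\mathbbm{1} \otimes \Sigma^{-n} Z \cong \Sigma^{-n}(\mathbbm{1} \otimes Z)$, and since $\mathbbm{1} \otimes Z \cong Z$ in $\mathcal{H}_{\mathbbm{t}}$ (by the Künneth-induced unit isomorphism), this is $\cong \Sigma^{-n} Z$, which lies in $\mathcal{D}^{\leqslant n} \setminus \mathcal{D}^{\leqslant n-1}$ because $Z \neq 0$. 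So we get $\Sigma^{-n} Z \in \mathcal{D}^{\leqslant 0}$ with $Z \neq 0$, i.e. $H^{n}_{\mathbbm{t}}(\Sigma^{-n}Z) = Z \neq 0$ but $H^{i}_{\mathbbm{t}}(\Sigma^{-n}Z) = 0$ for $i > 0$ (from membership in $\mathcal{D}^{\leqslant 0}$). If $n > 0$ this is already a contradiction since $n > 0$ forces $H^{n}_{\mathbbm{t}} = 0$; if $n < 0$ one instead uses condition (2), $\mathcal{D}^{\geqslant 0} \otimes \mathcal{D}^{\geqslant n} \subseteq \mathcal{D}^{\geqslant 0}$, with $\mathbbm{1} \in \mathcal{D}^{\geqslant 0}$ and $Y = \Sigma^{-n} Z \in \mathcal{D}^{\geqslant n}$, to derive $\Sigma^{-n} Z \in \mathcal{D}^{\geqslant 0}$, again contradicting $Z \neq 0$.

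Concretely, the cleanest route is: apply the Künneth isomorphism of Theorem \ref{K.lemma} to $X = \mathbbm{1}$ and $Y = \Sigma^{-n} Z$ directly, which already shows $H^{i}_{\mathbbm{t}}(\mathbbm{1} \otimes \Sigma^{-n}Z) \cong \coprod_{p+q=i} H^p_{\mathbbm{t}}(\mathbbm{1}) \otimes H^q_{\mathbbm{t}}(\Sigma^{-n}Z)$, so the only non-vanishing cohomology of $\mathbbm{1} \otimes \Sigma^{-n}Z$ is in degree $n$ (where it equals $H^0_{\mathbbm{t}}(\mathbbm{1}) \otimes Z \neq 0$, the non-vanishing because $H^0_{\mathbbm{t}}(\mathbbm{1})$ is a unit on the heart). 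By Lemma \ref{when it's 0}(2) this places $\mathbbm{1} \otimes \Sigma^{-n}Z$ in $\mathcal{D}^{\leqslant n} \cap \mathcal{D}^{\geqslant n}$ but in neither $\mathcal{D}^{\leqslant n-1}$ nor $\mathcal{D}^{\geqslant n+1}$. Yet condition (1) with $\mathbbm{1} \in \mathcal{D}^{\leqslant 0}$ and $\Sigma^{-n}Z \in \mathcal{D}^{\leqslant n}$ gives $\mathbbm{1} \otimes \Sigma^{-n}Z \in \mathcal{D}^{\leqslant 0}$, forcing $n \leqslant 0$; and condition (2) with $\mathbbm{1} \in \mathcal{D}^{\geqslant 0}$ and $\Sigma^{-n}Z \in \mathcal{D}^{\geqslant n}$ gives $\mathbbm{1} \otimes \Sigma^{-n}Z \in \mathcal{D}^{\geqslant 0}$, forcing $n \geqslant 0$. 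Hence $n = 0$, and $\textnormal{dev}(\mathbbm{t}) = \{0\}$.

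The only mildly delicate point, and the place I would be most careful, is the non-vanishing of $H^0_{\mathbbm{t}}(\mathbbm{1}) \otimes Z$: this needs that $H^0_{\mathbbm{t}}(\mathbbm{1})$ acts as a (two-sided) unit on $\mathcal{H}_{\mathbbm{t}}$, which is exactly what Proposition \ref{yy main1} and the subsequent Remark provide, so no object of the heart is killed by tensoring with it. Everything else is a bookkeeping exercise with the Künneth formula and Lemma \ref{when it's 0}; there is no serious obstacle.
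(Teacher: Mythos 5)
Your overall strategy is the right one -- shift a non-zero object of the heart by $-n$, tensor it against something in degree $0$, and read off a contradiction from the deviation conditions via K\"unneth and Lemma \ref{when it's 0} -- but the proof as written rests on an unjustified claim: you repeatedly use $\mathbbm{1}\in\mathcal{H}_{\mathbbm{t}}$, i.e.\ $\mathbbm{1}\in\mathcal{D}^{\leqslant 0}$ and $\mathbbm{1}\in\mathcal{D}^{\geqslant 0}$ (and, in your ``cleanest route'', that $H^{p}_{\mathbbm{t}}(\mathbbm{1})=0$ for all $p\neq 0$, so that the only cohomology of $\mathbbm{1}\otimes\Sigma^{-n}Z$ sits in degree $n$). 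Under the hypotheses of Proposition \ref{unique dev} this is not available: all that is known at this point is $H^{0}_{\mathbbm{t}}(\mathbbm{1})\neq 0$ (the Remark after Proposition \ref{yy main1}); the statement $\mathbbm{1}\cong H^{0}_{\mathbbm{t}}(\mathbbm{1})\in\mathcal{H}_{\mathbbm{t}}$ is exactly Proposition \ref{ring prop}, which is proved only later and only under the additional assumption that $\mathbbm{t}$ is tensor reduced -- an assumption Proposition \ref{unique dev} does not make, and which cannot be smuggled in without losing the stated generality. In particular the step ``condition (1) with $\mathbbm{1}\in\mathcal{D}^{\leqslant 0}$ gives $\mathbbm{1}\otimes\Sigma^{-n}Z\in\mathcal{D}^{\leqslant 0}$'' (and its mirror with $\mathcal{D}^{\geqslant 0}$) does not follow, and the asserted concentration of the cohomology of $\mathbbm{1}\otimes\Sigma^{-n}Z$ in degree $n$ is false in general.

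The repair is small and brings you exactly to the paper's proof: do not tensor with $\mathbbm{1}$ at all, but with $U:=H^{0}_{\mathbbm{t}}(\mathbbm{1})$, which genuinely lies in $\mathcal{H}_{\mathbbm{t}}\subseteq\mathcal{D}^{\leqslant 0}\cap\mathcal{D}^{\geqslant 0}$ and is a unit for the heart by Proposition \ref{yy main1} (which needs only $0\in\textnormal{dev}(\mathbbm{t})$). For $n>0$, condition (1) for $n$ gives $U\otimes\Sigma^{-n}U\in\mathcal{D}^{\leqslant 0}$, while $U\otimes\Sigma^{-n}U\cong\Sigma^{-n}(U\otimes U)\cong\Sigma^{-n}U\neq 0$ has non-vanishing $H^{n}_{\mathbbm{t}}$, contradicting Lemma \ref{when it's 0}; the case $n<0$ is symmetric using condition (2). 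With that substitution your argument is correct and coincides with the paper's; as written, however, it proves the proposition only for tensor reduced $\mathbbm{t}$, not in the generality claimed.
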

\begin{proof}
Suppose that $a \in \text{dev}(\mathbbm{t})$ and $a\neq 0$. If $a>0$,  then $\mathcal{D}^{\leqslant 0} \otimes \mathcal{D}^{\leqslant a} \subseteq \mathcal{D}^{\leqslant 0}$. Note that $H^{0}_{\mathbbm{t}}(\mathbbm{1}) \in \mathcal{H}_{\mathbbm{t}}\subseteq\mathcal{D}^{\leqslant 0} \subseteq \mathcal{D}^{\leqslant a}$. Since $H^{0}_{\mathbbm{t}}(\mathbbm{1})$ is the unit of $\mathcal{H}_{\mathbbm{t}}$,   
$$H^{0}_{\mathbbm{t}}(\mathbbm{1}) \otimes \Sigma^{-a} H^{0}_{\mathbbm{t}}(\mathbbm{1})\cong \Sigma^{-a}(H^{0}_{\mathbbm{t}}(\mathbbm{1}) \otimes H^{0}_{\mathbbm{t}}(\mathbbm{1}))\cong \Sigma^{-a} H^{0}_{\mathbbm{t}}(\mathbbm{1}) \neq 0.$$ 
However, according to Lemma \ref{when it's 0}, $\Sigma^{-a} H^{0}_{\mathbbm{t}}(\mathbbm{1})$ is not contained in $\mathcal{D}^{\leqslant 0}$, which is a contradiction. The case $a<0$ can be dealt with analogously.
\end{proof}

Next we discuss some examples related to monoidal t-structures. 

\begin{example}\label{key example}
Let $\mathcal{A}$ be a monoidal abelian category with biexact tensor product. We claim that the standard t-structure $\mathbbm{t}_{\mathcal{A}}$ is a monoidal t-structure on $\Der^{b}(\mathcal{A})$. Due to \cite[Lemma 3.4]{biglari2007kunneth}, it is sufficient to prove that $X^{\bullet} \widetilde{\otimes} Y^{\bullet}\in \mathcal{H}_{\mathbbm{t}_{\mathcal{A}}}$ for any $X^{\bullet},Y^{\bullet}\in\mathcal{H}_{\mathbbm{t}_{\mathcal{A}}}$. Provided that $X^{\bullet},Y^{\bullet}\in \mathcal{H}_{\mathbbm{t}_{\mathcal{A}}}$, then $X^{\bullet}\cong H^{0}(X^{\bullet})$ and $Y^{\bullet}\cong H^{0}(Y^{\bullet})$ by Lemma \ref{when it's 0}.

Therefore, 
$$X^{\bullet} \widetilde{\otimes} Y^{\bullet} \cong H^{0}(X^{\bullet}) \otimes H^{0}(Y^{\bullet}) \in \mathcal{H}_{\mathbbm{t}_{\mathcal{A}}},$$ 
which implies that $\mathbbm{t}_{\mathcal{A}}$ is a monoidal t-structure. Moreover, $\mathcal{H}_{\mathbbm{t}_{\mathcal{A}}}$ is monoidal abelian equivalent to $\mathcal{A}$.
\end{example}

The following example shows that there exists a bounded t-structure on a monoidal triangulated category which is not a monoidal t-structure.

\begin{example}
The ring $\mathbbm{Z}$ is hereditary since it is a PID. Hence $\textbf{K}^{b}(\text{proj}\-\mathbbm{Z})$ is triangulated equivalent to $\Der^{b}(\mods\-\mathbbm{Z})$. There is a bounded t-structure on $\textbf{K}^{b}(\text{proj}\-\mathbbm{Z})$, which is the image of the standard t-structure $\mathbbm{t}_{\mathbbm{Z}}$ under this equivalence. Since $(\mods\-\mathbbm{Z}, \otimes_{\mathbbm{Z}}, \mathbbm{Z})$ is a monoidal abelian category, $(\textbf{K}^{b}(\text{mod}\-\mathbbm{Z}), \widetilde{\otimes}, \mathbbm{Z}^{\bullet})$ is a monoidal triangulated category, where $\mathbbm{Z}^{\bullet}$ is the stalk complex with $\mathbbm{Z}$ concentrated in degree $0$. As $\textbf{K}^{b}(\text{proj}\-\mathbbm{Z})$ is closed under the tensor product, then $(\textbf{K}^{b}(\text{proj}\-\mathbbm{Z}), \widetilde{\otimes}, \mathbbm{Z}^{\bullet})$ is also a monoidal triangulated category. Consider the $\mathbbm{Z}$-module $\mathbbm{Z}/6\mathbbm{Z}$ and the complex $X^{\bullet}$ in $\textbf{K}^{b}(\text{proj}\-\mathbbm{Z})$ where
$$X^{\bullet}:\cdots\rightarrow 0\rightarrow \mathbbm{Z} \stackrel{2\cdot}{\longrightarrow}\mathbbm{Z} \rightarrow 0 \rightarrow \cdots$$
and the non-zero terms are only concentrated in degree $-1$ and $0$. By definition both $\mathbbm{Z}/6\mathbbm{Z}$ (as a stalk complex in degree 0) and $X^{\bullet}$ are contained in $\mathcal{H}_{\mathbbm{t}_{\mathbbm{Z}}}$. However, $$(\mathbbm{Z}/6\mathbbm{Z})\widetilde{\otimes} X^{\bullet}\cong \cdots\rightarrow 0\rightarrow \mathbbm{Z}/6\mathbbm{Z} \stackrel{2\cdot}{\longrightarrow}\mathbbm{Z}/6\mathbbm{Z} \rightarrow 0 \rightarrow \cdots$$
is not in $\mathcal{H}_{\mathbbm{t}_{\mathbbm{Z}}}$. Indeed, 
$$H^{-1}_{\mathbbm{t_{\mathbbm{Z}}}}((\mathbbm{Z}/6\mathbbm{Z})\widetilde{\otimes} X^{\bullet})]\cong \mathbbm{Z}/2\mathbbm{Z}\cong H^{0}_{\mathbbm{t_{\mathbbm{Z}}}}((\mathbbm{Z}/6\mathbbm{Z})\widetilde{\otimes} X^{\bullet}).$$ 
Hence $\mathbbm{t}_{\mathbbm{Z}}$ is not a monoidal t-structure on $\textbf{K}^{b}(\text{proj}\-\mathbbm{Z})$. 
\end{example}

\subsection{Tensor reduced monoidal t-structures}\label{Tensor reduced monoidal t-structures}
This subsection is about a special class of monoidal categories called tensor reduced. In this setup, two equivalent monoidal t-structures are equal. In Section \ref{monoidal_t_str}, we observed that $H^{0}_{\mathbbm{t}}(\mathbbm{1})$ is non-zero for a monoidal t-structure $\mathbbm{t}$ on $\mathcal{T}$. In this subsection, we will obtain that $H^{0}_{\mathbbm{t}}(\mathbbm{1})$ is isomorphic to the unit $\mathbbm{1}$ when  $\mathcal{T}$ is a tensor reduced monoidal triangulated category.

\begin{definition}\label{reduced monoidal}
A monoidal additive category $\mathcal{C}$ is called \textbf{tensor reduced} if for any object $X \in \mathcal{C}$, $X\otimes X=0$ if and only if $X=0$. 

Let $\mathcal{T}$ be a monoidal triangulated category.
\begin{enumerate}
\item $\mathcal{T}$ is called \textbf{tensor reduced} if $\mathcal{T}$ is tensor reduced as a monoidal additive category.
\item A monoidal t-structure $\mathbbm{t}$ on $\mathcal{T}$ with $0 \in \text{dev}(\mathbbm{t})$ is called \textbf{tensor reduced}, if $\mathcal{H}_{\mathbbm{t}}$ is tensor reduced as a monoidal additive category.

\end{enumerate}
\end{definition}

Now we are going to discuss some properties of tensor reduced monoidal t-structures.

\begin{lemma}\label{homology group lemma}
Let $\mathbbm{t}$ be a tensor reduced monoidal t-structure on $\mathcal{T}$. Suppose that $X\in \mathcal{D}^{\leqslant m}$  with $H^{m}_{\mathbbm{t}}(X)\neq 0$ and $Y\in \mathcal{D}^{\geqslant n}$ with $H^{n}_{\mathbbm{t}}(Y)\neq 0$ where $m,n \in \mathbbm{Z}$. Then $X \otimes X \in \mathcal{D}^{\leqslant 2m}$ with $H^{2m}_{\mathbbm{t}}(X \otimes X)\neq 0$ and $Y \otimes Y \in \mathcal{D}^{\geqslant 2n}$ with $H^{2n}_{\mathbbm{t}}(Y \otimes Y)\neq 0$.
\end{lemma}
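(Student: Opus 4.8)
The plan is to apply the Künneth formula of Theorem \ref{K.lemma} to compute the cohomology objects $H^{k}_{\mathbbm{t}}(X\otimes X)$, read off the required vanishing and non-vanishing from the hypotheses on $X$, and then run the dual argument for $Y$. Since the standing assumption $0\in\textnormal{\text{dev}}(\mathbbm{t})$ is in force, Theorem \ref{K.lemma} is available.

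First I would rephrase the hypothesis $X\in\mathcal{D}^{\leqslant m}$ cohomologically: by Lemma \ref{when it's 0}, $H^{i}_{\mathbbm{t}}(X)=0$ for every $i\geqslant m+1$, while $H^{m}_{\mathbbm{t}}(X)\neq 0$ by assumption. The Künneth isomorphism then gives, for every $k\in\mathbbm{Z}$, a natural isomorphism
\[ H^{k}_{\mathbbm{t}}(X\otimes X)\;\cong\;\coprod_{p+q=k}H^{p}_{\mathbbm{t}}(X)\otimes H^{q}_{\mathbbm{t}}(X). \]
A summand $H^{p}_{\mathbbm{t}}(X)\otimes H^{q}_{\mathbbm{t}}(X)$ can be nonzero only if $p\leqslant m$ and $q\leqslant m$; hence for $k\geqslant 2m+1$ every summand vanishes, so $H^{k}_{\mathbbm{t}}(X\otimes X)=0$ for all $k\geqslant 2m+1$, and Lemma \ref{when it's 0} yields $X\otimes X\in\mathcal{D}^{\leqslant 2m}$. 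For $k=2m$ the constraints $p+q=2m$, $p\leqslant m$, $q\leqslant m$ force $p=q=m$, so the decomposition collapses to $H^{2m}_{\mathbbm{t}}(X\otimes X)\cong H^{m}_{\mathbbm{t}}(X)\otimes H^{m}_{\mathbbm{t}}(X)$.

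It remains to show this last object is nonzero, and here I would invoke tensor reducedness. The tensor product appearing in the Künneth formula is the one of $\mathcal{T}$, which restricts to the inherited monoidal structure on $\mathcal{H}_{\mathbbm{t}}$ (because $0\in\textnormal{\text{dev}}(\mathbbm{t})$ gives $\mathcal{H}_{\mathbbm{t}}\otimes\mathcal{H}_{\mathbbm{t}}\subseteq\mathcal{H}_{\mathbbm{t}}$, cf. the proof of Proposition \ref{yy main1}). Since $H^{m}_{\mathbbm{t}}(X)\in\mathcal{H}_{\mathbbm{t}}$ is nonzero and $\mathbbm{t}$ is tensor reduced, Definition \ref{reduced monoidal} gives $H^{m}_{\mathbbm{t}}(X)\otimes H^{m}_{\mathbbm{t}}(X)\neq 0$, whence $H^{2m}_{\mathbbm{t}}(X\otimes X)\neq 0$.

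The statement for $Y$ is entirely dual: $Y\in\mathcal{D}^{\geqslant n}$ means $H^{i}_{\mathbbm{t}}(Y)=0$ for $i\leqslant n-1$ by Lemma \ref{when it's 0}, so in the Künneth decomposition of $H^{k}_{\mathbbm{t}}(Y\otimes Y)$ a summand survives only when $p\geqslant n$ and $q\geqslant n$; this forces $H^{k}_{\mathbbm{t}}(Y\otimes Y)=0$ for $k\leqslant 2n-1$, hence $Y\otimes Y\in\mathcal{D}^{\geqslant 2n}$, and $H^{2n}_{\mathbbm{t}}(Y\otimes Y)\cong H^{n}_{\mathbbm{t}}(Y)\otimes H^{n}_{\mathbbm{t}}(Y)$, which is nonzero again by tensor reducedness. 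I do not expect a genuine obstacle in this argument; the only point that needs a little care is identifying the tensor product in the Künneth formula with the monoidal structure on the heart, so that the tensor reducedness hypothesis can legitimately be applied to $H^{m}_{\mathbbm{t}}(X)$ and $H^{n}_{\mathbbm{t}}(Y)$.
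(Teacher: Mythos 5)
Your proof is correct and follows essentially the same route as the paper: the Künneth isomorphism of Theorem \ref{K.lemma} in degree $2m$ (resp. $2n$) exhibits $H^{m}_{\mathbbm{t}}(X)\otimes H^{m}_{\mathbbm{t}}(X)$ as a summand, which is nonzero by tensor reducedness of $\mathcal{H}_{\mathbbm{t}}$. The only (harmless) divergence is in the containment $X\otimes X\in\mathcal{D}^{\leqslant 2m}$: the paper obtains it in one line from the monoidal t-structure axiom $\mathcal{D}^{\leqslant m}\otimes\mathcal{D}^{\leqslant m}\subseteq\mathcal{D}^{\leqslant 2m}$ (valid since $0\in\textnormal{dev}(\mathbbm{t})$), whereas you rederive it from Künneth vanishing in degrees $\geqslant 2m+1$ together with Lemma \ref{when it's 0}(2); both arguments work, and yours has the small bonus of the exact identification $H^{2m}_{\mathbbm{t}}(X\otimes X)\cong H^{m}_{\mathbbm{t}}(X)\otimes H^{m}_{\mathbbm{t}}(X)$ rather than only a direct-summand statement.
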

\begin{proof}
We only show the case when $X \in \mathcal{D}^{\leqslant m}$, as the proof for $Y \in \mathcal{D}^{\geqslant n}$ follows in a similar way. Since $\mathbbm{t}$ is a monoidal t-structure with $0 \in \text{dev}(\mathbbm{t})$, 
$$X \otimes X \in \mathcal{D}^{\leqslant m} \otimes \mathcal{D}^{\leqslant m} \subseteq \mathcal{D}^{\leqslant 2m}.$$
By using Theorem \ref{K.lemma}, 
\[H^{2m}_{\mathbbm{t}}(X\otimes X) \cong \coprod_{p+q=2m}H^{p}_{\mathbbm{t}}(X) \otimes H^{q}_{\mathbbm{t}}(X).\]
As $\mathcal{H}_{\mathbbm{t}}$ is tensor reduced and $H^{m}_{\mathbbm{t}}(X)\neq 0$, we obtain that $H^{m}_{\mathbbm{t}}(X) \otimes H^{m}_{\mathbbm{t}}(X)\neq 0$. As $H^{m}_{\mathbbm{t}}(X) \otimes H^{m}_{\mathbbm{t}}(X)$ is a direct summand of $H^{2m}_{\mathbbm{t}}(X\otimes X)$, it follows that $H^{2m}_{\mathbbm{t}}(X\otimes X)\neq 0$.
\end{proof}

\begin{proposition}\label{tensor reduced prop}
Let $\mathbbm{t}$ be a monoidal t-structure on $\mathcal{T}$ with $0 \in \textnormal{\text{dev}}(\mathbbm{t})$. Then $\mathcal{T}$ is tensor reduced if and only if $\mathbbm{t}$ is tensor reduced. Consequently, if $\mathcal{T}$ has a tensor reduced monoidal t-structure, then all monoidal t-structures with $0$ contained in the deviations are tensor reduced.
\end{proposition}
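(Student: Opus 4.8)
The plan is to prove the two-way implication ``$\mathcal{T}$ is tensor reduced $\iff$ $\mathbbm{t}$ is tensor reduced'' and then harvest the ``consequently'' sentence as an immediate formal corollary. For the forward direction, assume $\mathcal{T}$ is tensor reduced and let $X \in \mathcal{H}_{\mathbbm{t}}$ with $X \otimes X = 0$. I want to conclude $X = 0$. Since the tensor product in $\mathcal{H}_{\mathbbm{t}}$ is the one inherited from $\mathcal{T}$ (by the Remark following Proposition \ref{yy main1}, the tensor product on the heart is computed as $H^0_{\mathbbm{t}}$ of the ambient tensor product), I have $H^0_{\mathbbm{t}}(X \otimes_{\mathcal{T}} X) = 0$; but actually I should be careful and instead work directly in $\mathcal{T}$: if $X = 0$ in $\mathcal{H}_{\mathbbm{t}}$ fails, then $X \neq 0$ as an object of $\mathcal{T}$, and since $\mathcal{T}$ is tensor reduced, $X \otimes_{\mathcal{T}} X \neq 0$. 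Now I need to upgrade ``$X \otimes_{\mathcal{T}} X \neq 0$'' to ``$H^0_{\mathbbm{t}}(X \otimes_{\mathcal{T}} X) \neq 0$''. Here Lemma \ref{homology group lemma} does exactly the work: $X \in \mathcal{H}_{\mathbbm{t}} = \mathcal{D}^{\leqslant 0} \cap \mathcal{D}^{\geqslant 0}$ with $H^0_{\mathbbm{t}}(X) \neq 0$, so taking $m = n = 0$ gives $X \otimes X \in \mathcal{D}^{\leqslant 0}$ with $H^0_{\mathbbm{t}}(X \otimes X) \neq 0$ — wait, Lemma \ref{homology group lemma} already presupposes $\mathbbm{t}$ tensor reduced, so I cannot invoke it here. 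Instead I will re-run its argument directly: by the Künneth formula (Theorem \ref{K.lemma}), $H^0_{\mathbbm{t}}(X \otimes X) \cong \coprod_{p+q=0} H^p_{\mathbbm{t}}(X) \otimes H^q_{\mathbbm{t}}(X) \cong H^0_{\mathbbm{t}}(X) \otimes H^0_{\mathbbm{t}}(X) \cong X \otimes X$ (using $H^q_{\mathbbm{t}}(X)=0$ for $q \neq 0$ by Lemma \ref{when it's 0}). So $H^0_{\mathbbm{t}}(X \otimes X) = X \otimes_{\mathcal{H}_{\mathbbm{t}}} X$ by the Remark, and tensor-reducedness of $\mathcal{H}_{\mathbbm{t}}$ is thereby seen to be equivalent to the statement that $X \neq 0$ in $\mathcal{H}_{\mathbbm{t}}$ implies $X \otimes_{\mathcal{T}} X \neq 0$ — which follows from $\mathcal{T}$ being tensor reduced since $\mathcal{H}_{\mathbbm{t}} \subseteq \mathcal{T}$ is a full subcategory and a nonzero object of the heart is a nonzero object of $\mathcal{T}$.

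For the reverse direction, assume $\mathbbm{t}$ is tensor reduced and let $Z \in \mathcal{T}$ with $Z \neq 0$; I must show $Z \otimes Z \neq 0$. Since $\mathbbm{t}$ is bounded, Lemma \ref{when it's 0}(1) gives a largest integer $m$ with $H^m_{\mathbbm{t}}(Z) \neq 0$ and a smallest integer $n$ with $H^n_{\mathbbm{t}}(Z) \neq 0$ (both exist and $n \leqslant m$). Then $Z \in \mathcal{D}^{\leqslant m} \cap \mathcal{D}^{\geqslant n}$ by Lemma \ref{when it's 0}(2). Now I apply Lemma \ref{homology group lemma} (legitimate here, since $\mathbbm{t}$ is assumed tensor reduced): from $Z \in \mathcal{D}^{\leqslant m}$ with $H^m_{\mathbbm{t}}(Z) \neq 0$ I get $H^{2m}_{\mathbbm{t}}(Z \otimes Z) \neq 0$, hence $Z \otimes Z \neq 0$ by Lemma \ref{when it's 0}(1). (One could equally use the $\mathcal{D}^{\geqslant n}$ half.) This closes the equivalence.

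Finally, the ``consequently'' clause: suppose $\mathcal{T}$ carries some tensor reduced monoidal t-structure $\mathbbm{s}$ (with $0 \in \mathrm{dev}(\mathbbm{s})$, as required by Definition \ref{reduced monoidal}(2) for the notion to be defined). By the forward direction applied to $\mathbbm{s}$ read backwards — that is, by the ``if'' part of the equivalence just proved, applied with $\mathbbm{t} = \mathbbm{s}$ — $\mathcal{T}$ is tensor reduced. Then for any other monoidal t-structure $\mathbbm{t}$ on $\mathcal{T}$ with $0 \in \mathrm{dev}(\mathbbm{t})$, the ``only if'' part of the equivalence gives that $\mathbbm{t}$ is tensor reduced. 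So all such monoidal t-structures are tensor reduced, as claimed.

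The main obstacle is the bookkeeping around not circularly invoking Lemma \ref{homology group lemma}: in the forward direction I must reprove the relevant Künneth computation by hand (which is short — it is just the $p+q=0$ case with all off-degree cohomology vanishing), whereas in the reverse direction Lemma \ref{homology group lemma} is available and does the heavy lifting of propagating nonvanishing of a top (or bottom) cohomology through the tensor square. A secondary point to get right is the identification of the heart's inherited tensor product $X \otimes_{\mathcal{H}_{\mathbbm{t}}} X$ with $H^0_{\mathbbm{t}}(X \otimes_{\mathcal{T}} X)$, which is exactly the content of the Remark after Proposition \ref{yy main1}; once that is in place, both directions are essentially formal consequences of the Künneth isomorphism and the boundedness of $\mathbbm{t}$.
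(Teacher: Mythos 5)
Your proof is correct and follows essentially the same route as the paper: the direction ``$\mathbbm{t}$ tensor reduced $\Rightarrow \mathcal{T}$ tensor reduced'' is exactly the paper's argument (pick the top nonvanishing cohomological degree $m$ of a nonzero object and apply Lemma \ref{homology group lemma} to get $H^{2m}_{\mathbbm{t}}(X\otimes X)\neq 0$), and the converse, which the paper dismisses as clear, needs nothing more than your closing observation that the heart is closed under $\otimes$ (since $0\in\mathrm{dev}(\mathbbm{t})$) so its tensor product is the restriction of the ambient one --- your detour through the K\"unneth formula to re-establish this is harmless but unnecessary. The ``consequently'' clause is handled identically in both.
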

\begin{proof}
If $\mathcal{T}$ is tensor reduced, then it is clear that $\mathbbm{t}$ is tensor reduced. For the other implication, let $X$ be a non-zero object in $\mathcal{T}$. Since $\mathbbm{t}$ is a bounded t-structure, there exists an integer $m\in \mathbbm{Z}$ such that $X\in \mathcal{D}^{\leqslant m}$ with $H^{m}_{\mathbbm{t}}(X)\neq 0$. Due to Lemma \ref{homology group lemma}, we obtain $H^{2m}_{\mathbbm{t}}(X \otimes X)\neq 0$ and therefore $X\otimes X\neq 0$. So $\mathcal{T}$ is tensor reduced.   
\end{proof}

\begin{example}
Let $H$ be a finite-dimensional Hopf algebra. According to Example \ref{key example}, the standard t-structure $\mathbbm{t}_{H}$ on $\Der^{b}(\mods\-H)$ is a monoidal t-structure whose heart is monoidal abelian equivalent to $\mods\-H$.  Moreover, the tensor product on $\mods\-H$ is $\otimes_{\mathbbm{k}}$ by Example \ref{hopf algebra}, which implies that $\mods\-H$ is tensor reduced. Then Proposition \ref{tensor reduced prop} implies that $\Der^{b}(\mods\-H)$ is also tensor reduced.   

\end{example}

\begin{remark}    
Following \cite[Definition 9.14]{miyachi2000derived}, a t-structure is called \textbf{stable} if $\Sigma \mathcal{D}^{\leqslant 0}=\mathcal{D}^{\leqslant 0}$. However the heart of a stable t-structure is $\{0\}$ (see \cite[Lemma 2.5]{chen2022extensions}). Since the monoidal triangulated category $\mathcal{T}$ is non-zero and monoidal t-structures are bounded, a monoidal t-structure must not be stable. Here we recall that a t-structure $\mathbbm{t}$ is not stable if and only if there are integers $a,b$ where $a<b$ such that $\mathcal{D}^{\leqslant a} \subsetneq \mathcal{D}^{\leqslant b}$. 
\end{remark}

\begin{theorem}\label{unique u.t.e}
Let $\mathbbm{t}$ and $\mathbbm{t}_{1}$ be tensor reduced monoidal t-structures on $\mathcal{T}$.  If $\mathbbm{t}$ and $\mathbbm{t}_{1}$ are equivalent, then $\mathbbm{t}=\mathbbm{t}_{1}$.
\end{theorem}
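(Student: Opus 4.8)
The aim is to show the aisles agree, $\mathcal{D}^{\leqslant 0} = \mathcal{D}_1^{\leqslant 0}$; since the coaisle of a t-structure is recovered as the right orthogonal of its aisle, this forces $\mathbbm{t} = \mathbbm{t}_1$. From the definition of equivalence we get integers $m \leqslant n$ with $\mathcal{D}^{\leqslant m} \subseteq \mathcal{D}_1^{\leqslant 0} \subseteq \mathcal{D}^{\leqslant n}$; rewriting this with $\mathcal{D}^{\leqslant k} = \Sigma^{-k}\mathcal{D}^{\leqslant 0}$ shows the relation is symmetric in $\mathbbm{t}$ and $\mathbbm{t}_1$, so it suffices to prove the single inclusion $\mathcal{D}_1^{\leqslant 0} \subseteq \mathcal{D}^{\leqslant 0}$. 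The hypotheses I will use are that $0 \in \text{dev}(\mathbbm{t})$, that $\mathbbm{t}$ is tensor reduced, and that $\mathcal{D}_1^{\leqslant 0} \subseteq \mathcal{D}^{\leqslant n}$.

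Assume for contradiction that there is $X \in \mathcal{D}_1^{\leqslant 0}$ with $X \notin \mathcal{D}^{\leqslant 0}$. Then $X \neq 0$, so by boundedness of $\mathbbm{t}$ and Lemma \ref{when it's 0} there is a largest integer $k$ with $H^k_{\mathbbm{t}}(X) \neq 0$, and $X \notin \mathcal{D}^{\leqslant 0}$ forces $k \geqslant 1$; also $X \in \mathcal{D}^{\leqslant k}$ by Lemma \ref{when it's 0}. Set $X_0 = X$ and $X_{j+1} = X_j \otimes X_j$, so $X_j$ is a tensor power $X^{\otimes 2^j}$ (associativity is irrelevant here). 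Since $0 \in \text{dev}(\mathbbm{t}_1)$ we have $\mathcal{D}_1^{\leqslant 0} \otimes \mathcal{D}_1^{\leqslant 0} \subseteq \mathcal{D}_1^{\leqslant 0}$, hence $X_j \in \mathcal{D}_1^{\leqslant 0}$ for all $j$ by induction. On the $\mathbbm{t}$ side, Lemma \ref{homology group lemma} applied to $\mathbbm{t}$ — this is exactly where tensor reducedness of $\mathbbm{t}$ enters — shows by induction on $j$ that $X_j \in \mathcal{D}^{\leqslant 2^j k}$ with $H^{2^j k}_{\mathbbm{t}}(X_j) \neq 0$.

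Now the contradiction: $X_j \in \mathcal{D}_1^{\leqslant 0} \subseteq \mathcal{D}^{\leqslant n}$, so by Lemma \ref{when it's 0} the $\mathbbm{t}$-cohomology of $X_j$ vanishes in all degrees $> n$; but $H^{2^j k}_{\mathbbm{t}}(X_j) \neq 0$ with $k \geqslant 1$, forcing $2^j \leqslant 2^j k \leqslant n$ for every $j \geqslant 0$, which is impossible. Hence $\mathcal{D}_1^{\leqslant 0} \subseteq \mathcal{D}^{\leqslant 0}$, and the symmetric argument (using that $\mathbbm{t}_1$ is tensor reduced, which also holds by Proposition \ref{tensor reduced prop}) gives $\mathcal{D}^{\leqslant 0} \subseteq \mathcal{D}_1^{\leqslant 0}$. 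Therefore $\mathcal{D}^{\leqslant 0} = \mathcal{D}_1^{\leqslant 0}$ and $\mathbbm{t} = \mathbbm{t}_1$.

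The whole argument rests on the ``cohomological amplification'' step: repeatedly tensoring $X$ with itself must push a nonzero top cohomology class to strictly larger degree, which is precisely Lemma \ref{homology group lemma}, i.e. the combination of the Künneth isomorphism of Theorem \ref{K.lemma} with tensor reducedness of the heart. Once that lemma is in hand the rest is routine t-structure bookkeeping (symmetry of equivalence, the orthogonality description of the coaisle, and Lemma \ref{when it's 0}), so I expect no serious further obstacle; the one point that must be handled carefully is that $k$ is \emph{strictly} positive — without this the amplification produces no contradiction, which is consistent with the statement genuinely failing in the absence of tensor reducedness, as the $\mathbbm{Z}$-example above shows.
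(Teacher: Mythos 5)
Your proof is correct, and while it runs on the same engine as the paper's proof -- Lemma \ref{homology group lemma}, i.e.\ the K\"unneth isomorphism of Theorem \ref{K.lemma} combined with tensor reducedness of the heart -- the surrounding argument is organised genuinely differently. The paper fixes the extremal constants $a\leqslant b$ with $\mathcal{D}^{\leqslant a}\subseteq\mathcal{D}_1^{\leqslant 0}\subseteq\mathcal{D}^{\leqslant b}$, produces witnesses realising the extremal cohomological degrees, splits into the cases $a=b$ (settled via the uniqueness of the deviation, Proposition \ref{unique dev}) and $a<b$ (settled by a \emph{single} tensor square of the appropriate witness, using both the aisle and the coaisle halves of Lemma \ref{homology group lemma}), and concludes $a=b=0$. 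You instead take one hypothetical object $X\in\mathcal{D}_1^{\leqslant 0}\setminus\mathcal{D}^{\leqslant 0}$, whose top nonvanishing $\mathbbm{t}$-cohomology sits in degree $k\geqslant 1$, and iterate the tensor square so that this degree grows like $2^{j}k$, eventually exceeding the fixed bound $n$ from $\mathcal{D}_1^{\leqslant 0}\subseteq\mathcal{D}^{\leqslant n}$; the symmetry of the equivalence relation (correctly justified via $\mathcal{D}^{\leqslant k}=\Sigma^{-k}\mathcal{D}^{\leqslant 0}$) then gives the reverse inclusion, and the standard fact that the aisle determines the coaisle finishes the proof. What your route buys: no extremal choice of $a,b$, no case distinction, no appeal to Proposition \ref{unique dev}, and only the aisle half of Lemma \ref{homology group lemma} is needed (applied to each t-structure in turn); the cost is iterating the amplification rather than applying it once. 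One cosmetic remark: the list of hypotheses you announce omits $0\in\textnormal{dev}(\mathbbm{t}_1)$, which you do use (and are entitled to, since it is part of Definition \ref{reduced monoidal}) to get $\mathcal{D}_1^{\leqslant 0}\otimes\mathcal{D}_1^{\leqslant 0}\subseteq\mathcal{D}_1^{\leqslant 0}$.
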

\begin{proof}
Since $\mathbbm{t}$ and $\mathbbm{t}_{1}$ are equivalent, there exist $a\leqslant b\in \mathbbm{Z}$ such that $\mathcal{D}^{\leqslant a} \subseteq \mathcal{D}_{1}^{\leqslant 0} \subseteq \mathcal{D}^{\leqslant b}$. Here we can choose $b$ (resp. $a$) to be the minimal (resp. maximal) integer satisfying this condition, namely, there does not exist an integer $k<b$ (resp. $k>a$) such that $\mathcal{D}^{\leqslant 0}_{1} \subseteq \mathcal{D}^{\leqslant k}$ (resp. $\mathcal{D}^{\leqslant k} \subseteq \mathcal{D}^{\leqslant 0}_{1}$). In this case, there are $X,Y \in \mathcal{D}^{\leqslant 0}_{1}$ such that both $H^{a}_{\mathbbm{t}}(X)$ and $H^{b}_{\mathbbm{t}}(Y)$ are non-zero.

\par If $a=b$, then $\mathbbm{t}_{1}=\Sigma^{-a}\mathbbm{t}$. We can deduce that $a=0$. Indeed, $\mathbbm{t}_{1}=\Sigma^{-a}\mathbbm{t}$ and $\mathbbm{t}_{1}$ is a monoidal t-structure with $0 \in \text{dev}(\mathbbm{t}_{1})$, which implies $\mathcal{D}^{\leqslant a} \otimes \mathcal{D}^{\leqslant a} \subseteq \mathcal{D}^{\leqslant a}$ and $a \in \text{dev}(\mathbbm{t})$. By Proposition \ref{unique dev}, $a=0$. 
\par If $a<b$, then one of the following two cases must occur: $b>0$ or $a<0$.
\begin{enumerate}
    \item Case 1: $b>0$. By Lemma \ref{homology group lemma}, $H^{2b}(Y\otimes Y)\neq 0$. Hence $Y\otimes Y$ is not contained in $\mathcal{D}^{\leqslant 0}_{1}$, which is a contradiction.
    \item Case 2: $a<0$. By Lemma \ref{homology group lemma}, $H^{2a}(X\otimes X)\neq 0$. Hence $X\otimes X$ is not contained in $\mathcal{D}^{\geqslant 0}_{1}$, which is a contradiction.
\end{enumerate}
Hence the only possible case is $a=b=0$, and the proof is complete.
\end{proof}

\begin{corollary}\label{unique u.t.e cor}
Let $F:\mathcal{T}_{1}\rightarrow \mathcal{T}_{2}$ be a monoidal triangulated equivalence between monoidal triangulated categories $\mathcal{T}_{1}$ and $\mathcal{T}_{2}$. Suppose that $\mathbbm{t}_{1}$ and $\mathbbm{t}_{2}$ are monoidal t-structures on $\mathcal{T}_{1}$ and $\mathcal{T}_{2}$ respectively. The $F$-images of aisle and coaisle of $\mathbbm{t}_{1}$ in $\mathcal{T}_{2}$ are denoted by
$$\mathcal{U}=\{X\in \mathcal{T}_{2}\;|\; \exists \Tilde{X} \in  \mathcal{D}_{1}^{\leqslant 0} \text{ such that } F(\Tilde{X}) \cong X\}, $$
$$\mathcal{V}=\{Y\in \mathcal{T}_{2}\;|\; \exists \Tilde{Y} \in  \mathcal{D}_{1}^{\geqslant 1} \text{ such that } F(\Tilde{Y}) \cong Y\}. $$
Then the following statements hold:
\begin{enumerate}
\item $\mathbbm{t}:=(\mathcal{U},\mathcal{V})$ is a monoidal t-structure on $\mathcal{T}_{2}$.
\item Suppose $0\in \textnormal{\text{dev}}(\mathbbm{t}_{1})$ and $\mathbbm{t}_{2}$ is tensor reduced. If $\mathbbm{t}$ and $\mathbbm{t}_{2}$ are equivalent, then the restriction 
$$F|_{\mathcal{H}_{\mathbbm{t}_{1}}}:\mathcal{H}_{\mathbbm{t}_{1}} \rightarrow \mathcal{H}_{\mathbbm{t}_{2}}$$
is a monoidal abelian equivalence.
\end{enumerate}
\end{corollary}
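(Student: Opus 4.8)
The plan is to prove the two parts in sequence, using Theorem \ref{unique u.t.e} as the crucial input for part (2).

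For part (1), I would first verify that $\mathbbm{t}=(\mathcal{U},\mathcal{V})$ is a bounded t-structure on $\mathcal{T}_2$. Since $F$ is an equivalence of triangulated categories, it sends distinguished triangles to distinguished triangles, preserves and reflects vanishing of Hom-groups, and commutes with the shift functor up to natural isomorphism; moreover every object of $\mathcal{T}_2$ is isomorphic to $F(\tilde{X})$ for some $\tilde{X}\in\mathcal{T}_1$. Hence (T1), (T2), (T3) for $\mathbbm{t}$ follow directly by transporting the corresponding conditions for $\mathbbm{t}_1$ along $F$, and boundedness of $\mathbbm{t}_1$ transports to boundedness of $\mathbbm{t}$. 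Next, for the monoidal conditions, since $F$ is a monoidal functor, $F(\tilde{X})\otimes F(\tilde{Y})\cong F(\tilde{X}\otimes\tilde{Y})$; picking $n\in\text{dev}(\mathbbm{t}_1)$ and using $\Sigma^{-n}\mathcal{D}_1^{\leqslant 0}=\mathcal{D}_1^{\leqslant n}$ together with $F\Sigma\cong\Sigma F$, the inclusions $\mathcal{D}_1^{\leqslant 0}\otimes\mathcal{D}_1^{\leqslant n}\subseteq\mathcal{D}_1^{\leqslant 0}$ and $\mathcal{D}_1^{\geqslant 0}\otimes\mathcal{D}_1^{\geqslant n}\subseteq\mathcal{D}_1^{\geqslant 0}$ transport to the analogous inclusions for $\mathcal{U},\mathcal{V}$, so $n\in\text{dev}(\mathbbm{t})$ and $\mathbbm{t}$ is a monoidal t-structure. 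One small point to spell out is that $\mathcal{U},\mathcal{V}$ are closed under isomorphism and that $F$ restricts to an equivalence between the hearts $\mathcal{H}_{\mathbbm{t}_1}$ and $\mathcal{H}_{\mathbbm{t}}=\mathcal{U}\cap\Sigma\mathcal{V}$ — indeed $F$ carries $\mathcal{D}_1^{\leqslant 0}\cap\mathcal{D}_1^{\geqslant 0}$ onto $\mathcal{U}\cap\mathcal{V}[1]$, essentially by construction.

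For part (2), after replacing $\mathbbm{t}_1$ by $\Sigma^{-k}\mathbbm{t}_1$ for suitable $k$ (allowed by Lemma \ref{shift yy}) I may assume $0\in\text{dev}(\mathbbm{t}_1)$, and then by part (1) also $0\in\text{dev}(\mathbbm{t})$. Since $\mathbbm{t}_2$ is tensor reduced and $\mathbbm{t}$ is equivalent to $\mathbbm{t}_2$, Proposition \ref{tensor reduced prop} gives that $\mathcal{T}_2$ is tensor reduced and hence $\mathbbm{t}$ is tensor reduced as well. Now Theorem \ref{unique u.t.e} applies to the two tensor reduced monoidal t-structures $\mathbbm{t}$ and $\mathbbm{t}_2$ on $\mathcal{T}_2$, which are equivalent by hypothesis, so $\mathbbm{t}=\mathbbm{t}_2$; in particular $\mathcal{H}_{\mathbbm{t}}=\mathcal{H}_{\mathbbm{t}_2}$. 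Combining with the fact from part (1) that $F$ restricts to an equivalence of abelian categories $\mathcal{H}_{\mathbbm{t}_1}\xrightarrow{\sim}\mathcal{H}_{\mathbbm{t}}$, we get an equivalence $F|_{\mathcal{H}_{\mathbbm{t}_1}}:\mathcal{H}_{\mathbbm{t}_1}\to\mathcal{H}_{\mathbbm{t}_2}$. It remains to check this restriction is monoidal: by Proposition \ref{yy main1} the tensor products on $\mathcal{H}_{\mathbbm{t}_1}$ and $\mathcal{H}_{\mathbbm{t}_2}$ are computed as $A\otimes_{\mathcal{H}}B=H^0_{\mathbbm{t}}(A\otimes B)$ using the ambient tensor product, and since $F$ commutes with the ambient tensor and (by $\mathbbm{t}=\mathbbm{t}_2$) intertwines the cohomological functors $H^0_{\mathbbm{t}_1}$ and $H^0_{\mathbbm{t}_2}$, it is compatible with these induced tensor structures; it also sends the unit $H^0_{\mathbbm{t}_1}(\mathbbm{1}_{\mathcal{T}_1})$ to $H^0_{\mathbbm{t}_2}(\mathbbm{1}_{\mathcal{T}_2})$ because $F(\mathbbm{1}_{\mathcal{T}_1})\cong\mathbbm{1}_{\mathcal{T}_2}$.

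The main obstacle I anticipate is bookkeeping rather than a deep difficulty: one must be careful that "$F$ restricts to an equivalence of hearts" is genuinely an equivalence of the \emph{monoidal} abelian categories, which requires checking that the associativity and unit constraints of $\mathcal{H}_{\mathbbm{t}_1}$ — themselves induced from those of $\mathcal{T}_1$ via the Künneth isomorphism of Theorem \ref{K.lemma} — are matched under $F$ with those of $\mathcal{H}_{\mathbbm{t}_2}$. This amounts to invoking naturality of the Künneth isomorphism in Theorem \ref{K.lemma} and the fact that $F$, being a monoidal triangulated functor, is compatible with the data ($e_{-,-}$, $\theta_{-,-}$, the associativity and unit isomorphisms) used to construct those constraints. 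A secondary subtlety is the reduction "assume $0\in\text{dev}(\mathbbm{t}_1)$": one should note that shifting $\mathbbm{t}_1$ by $\Sigma^{-k}$ does not change the heart $\mathcal{H}_{\mathbbm{t}_1}$ up to the identity, nor the statement to be proved, so the reduction is harmless.
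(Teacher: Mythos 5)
Your proposal is correct and follows essentially the same route as the paper: part (1) by transporting the t-structure, boundedness and deviation conditions along the monoidal triangulated equivalence, and part (2) by observing that $F$ restricts to a monoidal abelian equivalence $\mathcal{H}_{\mathbbm{t}_{1}}\rightarrow\mathcal{H}_{\mathbbm{t}}$ (the hearts carrying the tensor structure inherited from the ambient categories), then using Proposition \ref{tensor reduced prop} to see that $\mathbbm{t}$ is tensor reduced and Theorem \ref{unique u.t.e} to conclude $\mathbbm{t}=\mathbbm{t}_{2}$, hence $\mathcal{H}_{\mathbbm{t}}=\mathcal{H}_{\mathbbm{t}_{2}}$. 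The only superfluous step is the proposed replacement of $\mathbbm{t}_{1}$ by $\Sigma^{-k}\mathbbm{t}_{1}$ (which would in fact shift the heart, contrary to your side remark), but since $0\in\textnormal{dev}(\mathbbm{t}_{1})$ is already part of the hypothesis of (2), no shift is needed and the argument stands.
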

\begin{proof}
As $F$ is a monoidal triangulated equivalence and $\mathbbm{t}_{1}$ is a monoidal t-structure, by definition $\mathbbm{t}$ is also a monoidal t-structure on $\mathcal{T}_{2}$. We are now turning to the proof of (2). 
\par Since the monoidal structures of $\mathcal{H}_{\mathbbm{t}_{1}}$ and $\mathcal{H}_{\mathbbm{t}}$ are inherited from $\mathcal{T}_{1}$ and $\mathcal{T}_{2}$ respectively, the restriction 
$$F|_{\mathcal{H}_{\mathbbm{t}_{1}}}:\mathcal{H}_{\mathbbm{t}_{1}} \rightarrow \mathcal{H}_{\mathbbm{t}}$$
is a monoidal abelian equivalence. By assumption, $0\in \textnormal{\text{dev}}(\mathbbm{t}_{1})$ which implies $0\in \textnormal{\text{dev}}(\mathbbm{t})$. Since $\mathbbm{t}_{2}$ is tensor reduced by assumption, $\mathbbm{t}$ is also tensor reduced by Proposition \ref{tensor reduced prop}.  Now that 
 $\mathbbm{t}$ and $\mathbbm{t}_{2}$ are equivalent, it follows that $\mathbbm{t}=\mathbbm{t}_{2}$ by Theorem \ref{unique u.t.e}. Hence $\mathcal{H}_{\mathbbm{t}}=\mathcal{H}_{\mathbbm{t}_{2}}$, which completes the proof.
\end{proof}

We conclude this section by discussing the heart $\mathcal{H}_{\mathbbm{t}}$ of a tensor reduced monoidal t-structure $\mathbbm{t}$ on $\mathcal{T}$. Furthermore, we notice that rigidity of $\mathcal{T}$ gives rigidity of 
$\mathcal{H}_{\mathbbm{t}}$.
\begin{proposition}\label{ring prop}
Let $\mathbbm{t}$ be a tensor reduced monoidal t-structure on $\mathcal{T}$. Then $H^{i}_{\mathbbm{t}}(\mathbbm{1})=0$ for all $i\neq 0$, that is $\mathbbm{1}\cong H^{0}_{\mathbbm{t}}(\mathbbm{1}) \in \mathcal{H}_{\mathbbm{t}}$.
\end{proposition}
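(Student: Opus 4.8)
The plan is to exploit the idempotency of the unit, $\mathbbm{1}\otimes\mathbbm{1}\cong\mathbbm{1}$, together with the ``doubling'' behaviour of extremal cohomology degrees recorded in Lemma \ref{homology group lemma}. Note that the hypothesis that $\mathbbm{t}$ be tensor reduced already includes $0\in\text{dev}(\mathbbm{t})$, so Lemma \ref{homology group lemma} (and the K\"unneth formula behind it) is available.

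First I would observe that $\mathbbm{1}\neq 0$: if $\mathbbm{1}=0$ then $X\cong X\otimes\mathbbm{1}\cong 0$ for every $X\in\mathcal{T}$, contradicting the standing assumption that $\mathcal{T}$ is non-zero. Since $\mathbbm{t}$ is bounded, Lemma \ref{when it's 0} shows that the set $\{\,i\in\mathbbm{Z}\mid H^{i}_{\mathbbm{t}}(\mathbbm{1})\neq 0\,\}$ is finite and non-empty; let $m$ be its minimum and $n$ its maximum, so that $m\leqslant n$, $\mathbbm{1}\in\mathcal{D}^{\leqslant n}\cap\mathcal{D}^{\geqslant m}$, $H^{n}_{\mathbbm{t}}(\mathbbm{1})\neq 0$ and $H^{m}_{\mathbbm{t}}(\mathbbm{1})\neq 0$.

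Next I would apply Lemma \ref{homology group lemma} twice. Using its first half with $X=\mathbbm{1}$ at the top degree $n$: from $\mathbbm{1}\in\mathcal{D}^{\leqslant n}$ with $H^{n}_{\mathbbm{t}}(\mathbbm{1})\neq 0$ we get $\mathbbm{1}\otimes\mathbbm{1}\in\mathcal{D}^{\leqslant 2n}$ with $H^{2n}_{\mathbbm{t}}(\mathbbm{1}\otimes\mathbbm{1})\neq 0$, which via $\mathbbm{1}\otimes\mathbbm{1}\cong\mathbbm{1}$ reads $H^{2n}_{\mathbbm{t}}(\mathbbm{1})\neq 0$; maximality of $n$ forces $2n\leqslant n$, i.e. $n\leqslant 0$. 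Dually, using the second half with $Y=\mathbbm{1}$ at the bottom degree $m$: from $\mathbbm{1}\in\mathcal{D}^{\geqslant m}$ with $H^{m}_{\mathbbm{t}}(\mathbbm{1})\neq 0$ we get $H^{2m}_{\mathbbm{t}}(\mathbbm{1})\cong H^{2m}_{\mathbbm{t}}(\mathbbm{1}\otimes\mathbbm{1})\neq 0$, and minimality of $m$ forces $2m\geqslant m$, i.e. $m\geqslant 0$. Combining, $0\geqslant n\geqslant m\geqslant 0$, so $m=n=0$. Hence $H^{i}_{\mathbbm{t}}(\mathbbm{1})=0$ for all $i\neq 0$, and $\mathbbm{1}\cong H^{0}_{\mathbbm{t}}(\mathbbm{1})\in\mathcal{H}_{\mathbbm{t}}$ by Lemma \ref{when it's 0}.

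There is essentially no serious obstacle here; the only subtlety is that Lemma \ref{homology group lemma} must be invoked in both of its forms --- the $\mathcal{D}^{\leqslant}$ form to bound $n$ from above and the $\mathcal{D}^{\geqslant}$ form to bound $m$ from below --- and that it is precisely the extremality of $n$ (resp.\ $m$) among the nonvanishing cohomology degrees of $\mathbbm{1}$ that upgrades the numerical inequality $2n\leqslant n$ (resp.\ $2m\geqslant m$) into the desired conclusion.
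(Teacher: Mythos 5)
Your proof is correct and follows essentially the same route as the paper: both identify the extremal nonvanishing cohomology degrees of $\mathbbm{1}$ and apply the doubling statement of Lemma \ref{homology group lemma} to $\mathbbm{1}\otimes\mathbbm{1}\cong\mathbbm{1}$ to force those degrees to be $0$, then conclude via Lemma \ref{when it's 0}. The only (harmless) difference is in how the sign constraint is settled: the paper invokes the earlier observation that $H^{0}_{\mathbbm{t}}(\mathbbm{1})\neq 0$ to anchor the extremal degrees at $0$ before deriving a contradiction, whereas you combine the two doubling inequalities $n\leqslant 0$ and $m\geqslant 0$ with the trivial relation $m\leqslant n$, which works just as well and slightly streamlines the dependence on that remark.
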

\begin{proof}
Since $\mathbbm{t}$ is bounded, there exist integers $m\geqslant n$ such that $\mathbbm{1}\in \mathcal{D}^{\leqslant m} \cap \mathcal{D}^{\geqslant n}$. Let
$$m:=\text{min}\{a\in \mathbbm{Z}\;|\; H^{i}_{\mathbbm{t}}(\mathbbm{1})=0,\; \forall i>a \},$$
$$n:=\text{max}\{a\in \mathbbm{Z}\;|\; H^{i}_{\mathbbm{t}}(\mathbbm{1})=0,\; \forall i< a \}.$$
The choice of $m$ tells us that $H^{m}_{\mathbbm{t}}(\mathbbm{1})\neq 0$. Moreover, $m\geqslant 0$ because $H^{0}_{\mathbbm{t}}(\mathbbm{1})\neq 0$. If $m>0$, then $2m>m>0$ and therefore $H^{2m}_{\mathbbm{t}}(\mathbbm{1})=0$ which contradicts Lemma \ref{homology group lemma}. Hence $m=0$. Likewise $n=0$. To sum up, $H^{i}_{\mathbbm{t}}(\mathbbm{1})=0$ for any $i\neq 0$, and $\mathbbm{1}\cong H^{0}_{\mathbbm{t}}(\mathbbm{1}) \in \mathcal{H}_{\mathbbm{t}}$ by Lemma \ref{when it's 0}.
\end{proof}

\begin{corollary}\label{dual prop}
Let $\mathbbm{t}$ be a tensor reduced monoidal t-structure on $\mathcal{T}$. 
\begin{enumerate}
\item If $\mathcal{T}$ has left duals, then $(\mathcal{D}^{\leqslant 0})^{\ast} \subseteq \mathcal{D}^{\geqslant 0}$ and $(\mathcal{D}^{\geqslant 0})^{\ast} \subseteq \mathcal{D}^{\leqslant 0}$, which implies $\mathcal{H}_{\mathbbm{t}}$ has left duals. 
\item If $\mathcal{T}$ has right duals, then $^{\ast}(\mathcal{D}^{\leqslant 0})\subseteq \mathcal{D}^{\geqslant 0}$ and $^{\ast}(\mathcal{D}^{\geqslant 0}) \subseteq \mathcal{D}^{\leqslant 0}$, which implies $\mathcal{H}_{\mathbbm{t}}$ has right duals. 
\end{enumerate}
As a consequence, if $\mathcal{T}$ is rigid and $\emph{End}_{\mathcal{A}}(\mathbbm{1})\cong \mathbbm{k}$, then $\mathcal{H}_{\mathbbm{t}}$ is a tensor category.
\end{corollary}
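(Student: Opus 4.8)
The plan is to prove Corollary \ref{dual prop} by transporting dual objects along the truncation functors, using Proposition \ref{ring prop} as the crucial input that $\mathbbm{1} \in \mathcal{H}_{\mathbbm{t}}$. First I would treat statement (1); statement (2) is entirely parallel with left/right duals interchanged, and the final assertion follows immediately by combining (1), (2), Proposition \ref{yy main1}, Proposition \ref{ring prop} (which gives $\mathbbm{1}\cong H^0_{\mathbbm{t}}(\mathbbm{1})$, hence $\text{End}_{\mathcal{H}_{\mathbbm{t}}}(\mathbbm{1})\cong \text{End}_{\mathcal{T}}(\mathbbm{1})\cong\k$), and Definition \ref{tensor abelian cat} of a tensor category.

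For the inclusion $(\mathcal{D}^{\leqslant 0})^{\ast}\subseteq \mathcal{D}^{\geqslant 0}$: take $X\in \mathcal{D}^{\leqslant 0}$ with left dual $X^{\ast}$. I would argue by contradiction in the style of Lemma \ref{homology group lemma} and Proposition \ref{ring prop}. Suppose $X^{\ast}\notin \mathcal{D}^{\geqslant 0}$, so by Lemma \ref{when it's 0} there is some $i<0$ with $H^{i}_{\mathbbm{t}}(X^{\ast})\neq 0$; let $n$ be the minimal such index, so $X^{\ast}\in \mathcal{D}^{\geqslant n}$ with $H^{n}_{\mathbbm{t}}(X^{\ast})\neq 0$ and $n<0$. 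Since $\mathbbm{t}$ is bounded, $X\in\mathcal{D}^{\leqslant m}$ for some minimal $m$ with $H^{m}_{\mathbbm{t}}(X)\neq 0$, and $m\leqslant 0$. Now apply the Künneth formula (Theorem \ref{K.lemma}) to the evaluation map $\ev_{X}\colon X^{\ast}\otimes X\to \mathbbm{1}$ and to $\coev_{X}\colon \mathbbm{1}\to X\otimes X^{\ast}$. The point is that the composites defining rigidity being identities forces both $\ev_X$ and $\coev_X$ to be "nonzero in the appropriate degree". More precisely, $X\otimes X^{\ast}\in\mathcal{D}^{\leqslant m+ (\text{top of }X^\ast)}$ and the lowest nonvanishing cohomology of $X\otimes X^{\ast}$ sits in degree $m+n$; since $\coev_X$ is split mono (it has the retraction coming from the triangle identity composed with $\ev$), $H^0_{\mathbbm{t}}(\mathbbm{1})=H^0_{\mathbbm{t}}(X\otimes X^\ast)$ must receive $H^\bullet(X\otimes X^\ast)$, and an analysis of which Künneth summands can be nonzero, together with $H^i_{\mathbbm{t}}(\mathbbm{1})=0$ for $i\neq 0$ from Proposition \ref{ring prop}, pins down $m=n=0$ — in particular $n=0$, contradicting $n<0$. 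The cleanest incarnation: from $\text{id}_{X^\ast}=(\ev_X\otimes\text{id}_{X^\ast})\circ(\text{id}_{X^\ast}\otimes\coev_X)$ one gets that $X^\ast$ is a direct summand of $X^\ast\otimes X\otimes X^\ast$, hence by Künneth $H^n_{\mathbbm{t}}(X^\ast)$ is a summand of $\coprod_{p+q+r=n}H^p_{\mathbbm{t}}(X^\ast)\otimes H^q_{\mathbbm{t}}(X)\otimes H^r_{\mathbbm{t}}(X^\ast)$; since $X\in\mathcal{D}^{\leqslant 0}$ and $X^\ast\in\mathcal{D}^{\geqslant n}$ all indices satisfy $q\leqslant 0$, $p,r\geqslant n$, so $n=p+q+r\geqslant 2n+q\geqslant 2n$, i.e. $n\geqslant 0$ unless... — wait, this gives $n\leqslant -2n$, i.e. $3n\leqslant ... $ — I need to be careful. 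The correct bookkeeping: from the triangle identities one shows simultaneously $\ev_X$ nonvanishing in degree $m+n$ and $\coev_X$ nonvanishing in degree $m+n$, and since $\ev_X, \coev_X$ land in / come from $\mathbbm{1}$ which is concentrated in degree $0$, one is forced to have $m+n=0$; combined with $m\leqslant 0$ and $n\leqslant 0$ this gives $m=n=0$.

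For the second inclusion $(\mathcal{D}^{\geqslant 0})^{\ast}\subseteq \mathcal{D}^{\leqslant 0}$: if $Y\in\mathcal{D}^{\geqslant 0}$ has left dual $Y^\ast$, run the symmetric argument; suppose $Y^\ast\notin\mathcal{D}^{\leqslant 0}$, pick the maximal $m>0$ with $H^m_{\mathbbm{t}}(Y^\ast)\neq 0$ and the minimal $n$ with $H^n_{\mathbbm{t}}(Y)\neq 0$ (so $n\geqslant 0$), and derive $m+n=0$ from the triangle identities and Proposition \ref{ring prop}, contradicting $m>0$. Once both inclusions hold, restrict to the heart: for $X\in\mathcal{H}_{\mathbbm{t}}=\mathcal{D}^{\leqslant 0}\cap\mathcal{D}^{\geqslant 0}$, its left dual $X^\ast$ lies in $\mathcal{D}^{\geqslant 0}\cap\mathcal{D}^{\leqslant 0}=\mathcal{H}_{\mathbbm{t}}$, and the evaluation/coevaluation morphisms, being morphisms between objects of $\mathcal{H}_{\mathbbm{t}}$ (here using $\mathbbm{1}\in\mathcal{H}_{\mathbbm{t}}$ from Proposition \ref{ring prop}), already witness $X^\ast$ as a left dual inside the monoidal abelian category $\mathcal{H}_{\mathbbm{t}}$ — the triangle identities are identities in $\mathcal{T}$, hence in the full subcategory $\mathcal{H}_{\mathbbm{t}}$. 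I expect the main obstacle to be the degree bookkeeping in the first paragraph: getting the Künneth indices to yield exactly $m+n=0$ rather than a weaker inequality requires using \emph{both} triangle identities (one controls $\ev_X$, the other $\coev_X$) and the fact that $\mathbbm{1}$ is cohomologically concentrated in degree $0$; it is essentially the argument of Proposition \ref{ring prop} run relative to $X$, so I would model the write-up closely on that proof and on Lemma \ref{homology group lemma}.
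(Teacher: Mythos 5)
The core of your plan --- deducing $(\mathcal{D}^{\leqslant 0})^{\ast}\subseteq\mathcal{D}^{\geqslant 0}$ by K\"unneth bookkeeping on the triangle identities --- has a genuine gap at precisely the step you flag as the main obstacle. Your ``cleanest incarnation'' fails for the reason you half-noticed mid-sentence: from $X^{\ast}$ being a retract of $X^{\ast}\otimes X\otimes X^{\ast}$ and Theorem \ref{K.lemma} you only get the existence of indices $p,r\geqslant n$, $q\leqslant 0$ with $p+q+r=n$, i.e.\ $q\leqslant -n$, which is vacuous when $n<0$; moreover the tensor product of two nonzero objects of $\mathcal{H}_{\mathbbm{t}}$ need not be nonzero (tensor reducedness only controls $X\otimes X$), so a retract argument alone cannot even tell you which K\"unneth summand survives. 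The fallback claim --- ``$\ev_X$ and $\coev_X$ are nonvanishing in degree $m+n$, hence $m+n=0$'' --- is exactly what needs proof and is not supplied; as stated it is close to circular, since $H^{m+n}_{\mathbbm{t}}(\mathbbm{1})=0$ unless $m+n=0$ by Proposition \ref{ring prop}, so asserting nonvanishing of $H^{m+n}_{\mathbbm{t}}(\ev_X)$ presupposes the conclusion. Also the parenthetical ``$\coev_X$ is split mono (retraction from the triangle identity composed with $\ev$)'' is false in general: the triangle identities exhibit $X$ (resp.\ $X^{\ast}$) as a retract of $X\otimes X^{\ast}\otimes X$ (resp.\ $X^{\ast}\otimes X\otimes X^{\ast}$), not $\mathbbm{1}$ as a retract of $X\otimes X^{\ast}$.

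The strategy can be repaired, but only by exploiting the naturality in Theorem \ref{K.lemma} much more precisely than you do: applying $H^{n}_{\mathbbm{t}}$ to $\id_{X^{\ast}}=(\ev_X\otimes\id_{X^{\ast}})\circ(\id_{X^{\ast}}\otimes\coev_X)$ and using $H^{i}_{\mathbbm{t}}(\mathbbm{1})=0$ for $i\neq 0$, the first map only hits the K\"unneth summands of $H^{n}_{\mathbbm{t}}(X^{\ast}\otimes X\otimes X^{\ast})$ with first index $n$, while the second map only sees those with last index $n$, so $\id_{H^{n}_{\mathbbm{t}}(X^{\ast})}$ factors through $H^{n}_{\mathbbm{t}}(X^{\ast})\otimes H^{-n}_{\mathbbm{t}}(X)\otimes H^{n}_{\mathbbm{t}}(X^{\ast})$; this forces $H^{-n}_{\mathbbm{t}}(X)\neq 0$, hence $-n\leqslant m\leqslant 0$ and $n\geqslant 0$. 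None of this bookkeeping is in your write-up, and it is where the whole weight of the argument sits. The paper's proof avoids cohomology entirely and is much shorter: since $\mathbbm{1}\in\mathcal{H}_{\mathbbm{t}}$ by Proposition \ref{ring prop}, the duality adjunction (\cite[Proposition 2.10.8]{etingof2016tensor}) gives $\Hom_{\mathcal{T}}(A,\mathbbm{1}\otimes X^{\ast})\cong\Hom_{\mathcal{T}}(A\otimes X,\mathbbm{1})$ for $A\in\mathcal{D}^{\leqslant -1}$; as $A\otimes X\in\mathcal{D}^{\leqslant -1}\otimes\mathcal{D}^{\leqslant 0}\subseteq\mathcal{D}^{\leqslant -1}$ and $\mathbbm{1}\in\mathcal{D}^{\geqslant 0}$, this Hom vanishes, and since the coaisle $\mathcal{D}^{\geqslant 0}$ is the right orthogonal of $\mathcal{D}^{\leqslant -1}$ one gets $X^{\ast}\cong\mathbbm{1}\otimes X^{\ast}\in\mathcal{D}^{\geqslant 0}$, with the dual adjunction giving $(\mathcal{D}^{\geqslant 0})^{\ast}\subseteq\mathcal{D}^{\leqslant 0}$. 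Your handling of the restriction to the heart and of the final assertion (rigidity of $\mathcal{H}_{\mathbbm{t}}$ plus $\mathrm{End}(\mathbbm{1})\cong\k$ via $\mathbbm{1}\cong H^{0}_{\mathbbm{t}}(\mathbbm{1})$) is fine.
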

\begin{proof}
We only prove (1), and the proof of (2) is similar. Because $\mathbbm{t}$ is a monoidal t-structure with $0 \in \textnormal{\text{dev}}(\mathbbm{t})$, by definition $\mathcal{D}^{\leqslant 0} \otimes \mathcal{D}^{\leqslant 0} \subseteq \mathcal{D}^{\leqslant 0}$ and $\mathcal{D}^{\geqslant 0} \otimes \mathcal{D}^{\geqslant 0} \subseteq \mathcal{D}^{\geqslant 0}$. Since $\mathbbm{t}$ is tensor reduced,  $\mathbbm{1} \in \mathcal{H}_{\mathbbm{t}}$ by applying Proposition \ref{ring prop}. Due to \cite[Proposition 2.10.8.]{etingof2016tensor}, there are isomorphisms
$$\Hom_{\mathcal{T}}(\mathcal{D}^{\leqslant -1},\mathbbm{1}\otimes {(\mathcal{D}^{\leqslant 0})}^{\ast}) \cong \Hom_{\mathcal{T}}(\mathcal{D}^{\leqslant -1}\otimes \mathcal{D}^{\leqslant 0},\mathbbm{1})=0,$$
$$\Hom_{\mathcal{T}}(({\mathcal{D}^{\geqslant 0}})^{\ast} \otimes \mathbbm{1}, \mathcal{D}^{\geqslant 1}) \cong \Hom_{\mathcal{T}}(\mathbbm{1}, \mathcal{D}^{\geqslant 0}\otimes \mathcal{D}^{\geqslant 1})=0.$$
This yields that  ${(\mathcal{D}^{\leqslant 0})}^{\ast}=\mathbbm{1}\otimes {(\mathcal{D}^{\leqslant 0})}^{\ast} \subseteq \mathcal{D}^{\geqslant 0}$ and $(\mathcal{D}^{\geqslant 0})^{\ast} = ({\mathcal{D}^{\geqslant 0}})^{\ast} \otimes \mathbbm{1}\subseteq \mathcal{D}^{\leqslant 0}$. Hence 
$${\mathcal{H}_{\mathbbm{t}}}^{\ast} \subseteq {(\mathcal{D}^{\leqslant 0})}^{\ast} \cap (\mathcal{D}^{\geqslant 0})^{\ast} \subseteq \mathcal{D}^{\geqslant 0} \cap \mathcal{D}^{\leqslant 0}=\mathcal{H}_{\mathbbm{t}},$$
and we finish the proof.
\end{proof}

A natural question is whether the converse of Corollary \ref{dual prop} holds. More specifically, given a tensor reduced monoidal t-structure $\mathbbm{t}$ on $\mathcal{T}$, if the heart $\mathcal{H}_{\mathbbm{t}}$ is rigid, can we conclude that $\mathcal{T}$ is rigid as well? The general case is unclear to us, but the following shows that in some cases, the rigidity of the heart of a monoidal t-structure implies that the monoidal triangulated category is rigid.

\begin{example}
Suppose $H$ is a finite-dimensional Hopf algebra. We already know that $\mods\-H$ is rigid. In the following, we will see that $\Der^{b}(\mods\-H)$ is rigid as well. Since there is a monoidal dense functor from $\Com^b(\mods\-H)$ to $\Der^{b}(\mods\-H)$, it suffices, by remark \ref{rigid under monoidal}, to show that the category of complexes $\Com^b(\mods\-H)$ (which is also a monoidal category whose unit is the stalk complex $\mathbbm{k}^{\bullet}$ with $\mathbbm{k}$ in degree $0$) is rigid. Indeed, let $X^{\bullet}$ be any complex in $\Com^b(\mods\-H)$ denoted by 
$$X^{\bullet}: 0\rightarrow X^{-n}\xrightarrow{d_X^{-n}} X^{-n+1}\xrightarrow{d_X^{-n+1}}\cdots\xrightarrow{d_X^{-1}} X^0\xrightarrow{d_X^{0}}\cdots\xrightarrow{d_X^{m-2}} X^{m-1}\xrightarrow{d_X^{m-1}} X^m\rightarrow 0$$
where $n$, $m$ are positive integers.
Let $Y^{-i}$ be the left dual of $X^i$ where $-n\leqslant i\leqslant m$. Then there are evaluations and coevaluations denoted by 
$$\epsilon_i:Y^{-i}\otimes_\mathbbm{k} X^i\rightarrow \mathbbm{k},\;\; \eta_i:\mathbbm{k}\rightarrow X^i\otimes_\mathbbm{k} Y^{-i}.$$
We claim that the complex 
$$Y^{\bullet}: 0\rightarrow Y^{-m}\xrightarrow{(-1)^{-m+2}(d_X^{m-1})^* }Y^{-m+1}\xrightarrow{ }\cdots\xrightarrow{}  
\xrightarrow{(-1)^{n}(d_X^{-n+1})^*} Y^{n-1}\xrightarrow{(-1)^{n+1}(d_X^{-n})^* } Y^n\rightarrow 0$$
is the left dual of $X^{\bullet}$. Let $\epsilon:=\mathop{\bigoplus}\limits_{i=-n}^{m}\epsilon_i$ and $\eta:=\mathop{\bigoplus}\limits_{i=-n}^{m}\eta_{i}$. It is direct to see that $\epsilon$ and $\eta$ are chain maps 
$$\epsilon: Y^{\bullet}\widetilde{\otimes} X^{\bullet}\rightarrow \mathbbm{k}^{\bullet},\;\; \eta: \mathbbm{k}^{\bullet}\rightarrow X^{\bullet}\widetilde{\otimes} Y^{\bullet},$$
where $X^{\bullet}\widetilde{\otimes} Y^{\bullet}$ (resp. $Y^{\bullet}\widetilde{\otimes} X^{\bullet}$) denotes the total complex. Thus $Y^{\bullet}$ is the left dual of $X^{\bullet}$ with evaluation $\epsilon$ and coevaluation $\eta$. Likewise, we can construct the right dual of $X^{\bullet}$. 
\end{example}

\section{Locally finite tensor Grothendieck categories are tensor reduced}\label{L.N.T.R.G}
In this section, we introduce the concept of a locally finite tensor Grothendieck category. Our motivation for considering this category stems from the desire to explore certain monoidal abelian categories with biexact tensor product, which are not rigid but contain a finite tensor subcategory. For example, $\Mod\-\mathbbm{k}$ is not a rigid monoidal category having a finite tensor abelian subcategory $\mods\-\mathbbm{k}$. The tensor product on $\Mod\-\mathbbm{k}$ can be constructed using the filtered colimit of objects in $\mods\-\mathbbm{k}$, which is consistent with the tensor product over $\mathbbm{k}$. Inspired by this observation, we will first explain an analogous statement for locally coherent Grothendieck categories.

\subsection{Locally finite tensor Grothendieck categories}
According to \cite[Proposition 1.8.1]{grothendieck1957quelques}, an abelian category $\mathcal{A}$ satisfies \textbf{(Ab.5)} means that if and only if it is cocomplete and for every filtered category $I$ the colimit functor $\Colim:\text{Fun}[I,\mathcal{A}] \rightarrow \mathcal{A}$ is exact. An abelian category $\mathcal{A}$ is called a \textbf{Grothendieck category}, if it satisfies (Ab.5) and has a generator.

\begin{definition}\cite[Section 1.2 and 1.3]{herzog1997the}
Let $\mathcal{A}$ be a Grothendieck category. 
\begin{enumerate}
\item An object $A\in\mathcal{A}$ is called \textbf{finitely presented}, if the presentable functor $\Hom_{\mathcal{A}}(A,-):\mathcal{A}\rightarrow \Ab$ preserves filtered colimits. The full subcategory of $\mathcal{A}$ consisting of finitely presented objects will be denoted by $\fp(\mathcal{A})$.
\item $\mathcal{A}$ is called a \textbf{locally finitely presented Grothendieck category} if $\fp(\mathcal{A})$ is skeletally small and generates $\mathcal{A}$, i.e. any object is a filtered colimit of some finitely presented objects. According to \cite[Theorem 1.6]{herzog1997the}, if in addition, $\fp(\mathcal{A})$ is an abelian category, then $\mathcal{A}$ is called a \textbf{locally coherent Grothendieck category}.
\end{enumerate}
\end{definition}

Now let $\mathcal{A}$ be a locally coherent Grothendieck category.  Since $\fp(\mathcal{A})$ generates $\mathcal{A}$, we explore whether there are some additional structures on $\fp(\mathcal{A})$ inducing the corresponding structures on $\mathcal{A}$. The case we are interested in is that $\fp(\mathcal{A})$ satisfies the following finiteness condition. 

An abelian category $\mathcal{C}$ is said to be \textbf{finite} if the following conditions are satisfied:
\begin{enumerate}

\item every object in $\mathcal{C}$ has finite length;
\item $\mathcal{C}$ has finitely many isomorphism classes of simple objects. 

\end{enumerate}

\begin{remark}
There are some different variants of finiteness. The definition of finite abelian category needs the additional assumption that every simple has a projective cover in some references (for example in \cite[Definition 1.8.6]{etingof2016tensor}). 
\end{remark}

\begin{proposition}\textnormal{(\cite[Section 3.4]{hoha2002embedding})}\label{monoidal structure}
Let $\mathcal{A}$ be a locally coherent Grothendieck category. If $\textnormal{\fp}(\mathcal{A})$ is a monoidal abelian category, then $\mathcal{A}$ is a monoidal abelian category and the tensor product bifunctor commutes with filtered colimits in each variable. If the tensor product on $\textnormal{\fp}(\mathcal{A})$ is biexact, then the induced tensor product on $\mathcal{A}$ is also biexact.  
\end{proposition}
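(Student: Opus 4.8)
The plan is to extend the monoidal structure from $\fp(\mathcal{A})$ to $\mathcal{A}$ by a density/Kan-extension argument, using that every object of $\mathcal{A}$ is a filtered colimit of finitely presented objects and that such presentations are functorial up to the usual coherence. Concretely, I would first fix for each $A\in\mathcal{A}$ a filtered diagram $A\cong\Colim_{i} A_i$ with $A_i\in\fp(\mathcal{A})$, and for $A,B\in\mathcal{A}$ define
\[
A\otimes B:=\Colim_{(i,j)} (A_i\otimes B_j),
\]
the colimit taken over the product of the two filtered index categories (which is again filtered). The first task is to check this is well defined and functorial, i.e.\ independent of the chosen presentations; this is where one invokes the fact that $\fp(\mathcal{A})$ generates $\mathcal{A}$ and that $\Hom_{\mathcal{A}}(C,-)$ commutes with filtered colimits for $C\in\fp(\mathcal{A})$, so that maps out of a filtered colimit of finitely presented objects are computed termwise. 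Since each fixed tensor factor gives a functor $\fp(\mathcal{A})\to\mathcal{A}$ that one left Kan extends along $\fp(\mathcal{A})\hookrightarrow\mathcal{A}$, associativity, unit, and the coherence (pentagon, triangle) axioms are inherited from those on $\fp(\mathcal{A})$ because left Kan extension along a dense fully faithful functor is (pseudo)functorial and the relevant natural isomorphisms are determined on the dense subcategory. That the unit object of $\mathcal{A}$ is the same $\mathbbm{1}\in\fp(\mathcal{A})$ follows from $\mathbbm{1}\otimes(\Colim A_i)\cong\Colim(\mathbbm{1}\otimes A_i)\cong\Colim A_i$.

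Next I would verify the commutation with filtered colimits in each variable, which is essentially built into the construction: given a filtered system $(B_k)$ in $\mathcal{A}$, writing each $B_k$ as a filtered colimit of finitely presented objects and using that a filtered colimit of filtered colimits is a filtered colimit, one gets $A\otimes(\Colim_k B_k)\cong\Colim_k(A\otimes B_k)$ directly from associativity of colimits; the key input is (Ab.5), so that all these colimits behave well and the canonical comparison maps are isomorphisms. One should also note the tensor product so defined agrees with the original one on $\fp(\mathcal{A})\times\fp(\mathcal{A})$, since a finitely presented object is a retract of (indeed, can be taken to be) a constant diagram on itself.

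Finally, for biexactness: assuming $\otimes$ on $\fp(\mathcal{A})$ is biexact, I fix $A\in\mathcal{A}$ and show $A\otimes-$ is exact. Right exactness is automatic for any functor commuting with colimits. For left exactness, given a short exact sequence $0\to B'\to B\to B''\to 0$ in $\mathcal{A}$, I would reduce to the finitely presented case: write $A=\Colim A_i$, and observe that $A_i\otimes-$ is exact on $\mathcal{A}$ because it is a filtered colimit (over presentations of the $\mathcal{A}$-argument) of the biexact functors $A_i\otimes-$ on $\fp(\mathcal{A})$, using again that filtered colimits are exact in a Grothendieck category. Then $A\otimes-=\Colim_i(A_i\otimes-)$ is a filtered colimit of exact functors, hence exact by (Ab.5). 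The symmetric argument handles exactness in the other variable. The main obstacle, in my view, is not any single step but the bookkeeping of well-definedness and coherence in the first paragraph: one must be careful that the left Kan extension is computed by the pointwise colimit formula (which holds because $\mathcal{A}$ is cocomplete and $\fp(\mathcal{A})$ is small) and that the monoidal coherence data transports along it — this is exactly the content cited from \cite[Section 3.4]{hoha2002embedding}, so in the write-up I would lean on that reference for the routine verifications and emphasize the colimit-exactness arguments that give biexactness and the filtered-colimit compatibility.
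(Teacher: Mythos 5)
Your construction is exactly the one the paper records in the remark following the proposition and defers to \cite[Section 3.4]{hoha2002embedding}: extend $\otimes$ by the double filtered colimit formula $A\otimes B=\Colim_{i}\Colim_{j}A_i\otimes B_j$, verify compatibility with filtered colimits, and deduce biexactness from exactness of filtered colimits (Ab.5) together with approximation of (short exact sequences of) objects of $\mathcal{A}$ by finitely presented ones, which is where local coherence enters. Since the paper gives no independent proof beyond this citation and description, your proposal is correct and takes essentially the same approach.
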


\begin{remark}
The construction of the tensor product in a locally coherent Grothendieck category $\mathcal{A}$ is as follows. According to \cite[Corollary 1.4, p.1648]{crawley1994locally}, every object in $\mathcal{A}$ can be written as a filtered colimit of objects $X_i\in\fp(\mathcal{A})$ indexed by a filtered category $I$ as follows
$$\Colim\limits_{i \in I}X_{i}\cong X.$$
For any objects $A$ and $B$ in $\mathcal{A}$, as $\fp(\mathcal{A})$ is a tensor category, the tensor product in $\mathcal{A}$ is defined to be
$$A\otimes B:= \Colim_{i \in I} \Colim_{j\in J} A_{i} \otimes B_{j},$$
where $I$, $J$ are filtered categories, and \textbf{$\{A_{i}\}_{i\in I}$, $\{B_{j}\}_{j\in J}$} are objects in $\textnormal{\fp}(\mathcal{A})$ such that
$$\Colim\limits_{i \in I}A_{i}\cong A, \;\;\Colim\limits_{j \in J}B_{j}\cong B.$$ 

This tensor product is well-defined, see \cite[Section 3.4]{hoha2002embedding}.
\end{remark}

\begin{definition}\label{L.N.T.G cat}
Let $\mathcal{A}$ be a locally coherent Grothendieck category. If $\fp(\mathcal{A})$ is a finite tensor category, then $\mathcal{A}$ is called a \textbf{locally finite tensor Grothendieck category}. 
\end{definition}

Now we are ready to discuss the monoidal abelian equivalences between locally finite tensor Grothendieck categories. Given locally finite tensor Grothendieck categories $\mathcal{A}$ and $\mathcal{B}$, a functor from $\fp(\mathcal{A})$ to $\fp(\mathcal{B})$ can be extended to a functor from $\mathcal{A}$ to $\mathcal{B}$, as shown in the following statement.

\begin{lemma}\textnormal{(\cite[Lemma 2.7]{krause1997spectrum})}\label{extendsion lemma1}
Let $\mathcal{A}$ and $\mathcal{B}$ be locally finite tensor Grothendieck categories. Any functor $F:\textnormal{\fp}(\mathcal{A}) \rightarrow \textnormal{\fp}(\mathcal{B})$ can be extended uniquely, up to isomorphism, to a functor $\Tilde{F}:\mathcal{A} \rightarrow \mathcal{B}$ which commutes with filtered colimits. If $F$ is an exact monoidal functor, so is $\Tilde{F}$. Moreover, when $F$ is a monoidal abelian equivalence, $\Tilde{F}$ is also a monoidal abelian equivalence.
\end{lemma}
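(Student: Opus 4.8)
The plan is to build the extension $\tilde F$ by the standard universal-property-of-$\fp$ machinery, then verify that all the extra structure (exactness, monoidality, being an equivalence) is inherited. First I would invoke the cited result (\cite[Lemma 2.7]{krause1997spectrum}): since $\mathcal{A}$ is locally coherent, every object $A$ is a filtered colimit $A\cong\Colim_{i\in I}A_i$ with $A_i\in\fp(\mathcal{A})$, and one sets $\tilde F(A):=\Colim_{i\in I}F(A_i)$ in $\mathcal{B}$. One checks this is independent of the chosen presentation (any two filtered presentations are connected through the canonical diagram of finitely presented objects mapping to $A$), that it extends $F$ on $\fp(\mathcal{A})$ (since objects of $\fp(\mathcal{A})$ are their own colimit over the trivial index category), and that $\tilde F$ commutes with filtered colimits by a cofinality/interchange-of-colimits argument. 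Uniqueness up to isomorphism among colimit-preserving extensions is forced because such an extension is determined by its restriction to $\fp(\mathcal{A})$, the generators.

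Next I would treat exactness. If $F$ is exact on $\fp(\mathcal{A})$, then $\tilde F$ is exact on $\mathcal{A}$: in a locally coherent category every short exact sequence in $\mathcal{A}$ is a filtered colimit of short exact sequences in $\fp(\mathcal{A})$ (by \cite[Corollary 1.4]{crawley1994locally}-type arguments, which are already used in the excerpt for the tensor product construction), and filtered colimits are exact by (Ab.5), so applying $\tilde F$ preserves the sequence. For monoidality, recall from Proposition \ref{monoidal structure} that the tensor products on $\mathcal{A}$ and $\mathcal{B}$ are the filtered-colimit extensions of the tensor products on $\fp(\mathcal{A})$ and $\fp(\mathcal{B})$. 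Given $A\cong\Colim A_i$ and $A'\cong\Colim A'_j$, both $\tilde F(A\otimes A')$ and $\tilde F(A)\otimes\tilde F(A')$ are computed as $\Colim_{i,j}F(A_i\otimes A'_j)$ and $\Colim_{i,j}F(A_i)\otimes F(A'_j)$ respectively; the monoidal structure isomorphisms of $F$ on the finitely presented pieces, being natural, assemble into a natural isomorphism in the colimit, and $\tilde F(\mathbbm{1})\cong\mathbbm{1}$ because the units of $\mathcal{A}$, $\mathcal{B}$ lie in the finitely presented subcategories and $F$ preserves the unit. One must also check the coherence (associativity, unit) diagrams for $\tilde F$ commute; these follow from the corresponding diagrams for $F$ by passing to the colimit, using that the tensor product commutes with filtered colimits in each variable.

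Finally, if $F$ is a monoidal abelian equivalence, I would produce a quasi-inverse as follows: let $G:\fp(\mathcal{B})\to\fp(\mathcal{A})$ be a monoidal exact quasi-inverse of $F$ (note $F$ preserves finite presentedness since $F$ is an equivalence of the small categories $\fp(\mathcal{A})\simeq\fp(\mathcal{B})$), and form $\tilde G:\mathcal{B}\to\mathcal{A}$. The natural isomorphisms $GF\cong\id_{\fp(\mathcal{A})}$ and $FG\cong\id_{\fp(\mathcal{B})}$ extend to natural isomorphisms $\tilde G\tilde F\cong\id_{\mathcal{A}}$ and $\tilde F\tilde G\cong\id_{\mathcal{B}}$ by uniqueness of colimit-preserving extensions (both $\tilde G\tilde F$ and $\id_{\mathcal{A}}$ are colimit-preserving and agree on $\fp(\mathcal{A})$), so $\tilde F$ is an equivalence, and it is monoidal by the previous paragraph.

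The main obstacle I expect is the bookkeeping in the monoidality verification — specifically checking that the colimit of the constraint isomorphisms of $F$ is genuinely natural and satisfies the hexagon/pentagon and unit axioms, which requires care about the double filtered colimit $\Colim_{i\in I}\Colim_{j\in J}$ and the fact that $\otimes$ on $\mathcal{A}$ commutes with filtered colimits separately in each variable (so one may interchange and re-index the colimits). Everything else is routine once the universal property of $\fp(\mathcal{A})$ inside $\mathcal{A}$ and the exactness of filtered colimits are in hand.
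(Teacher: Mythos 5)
Your proposal is correct and follows essentially the same route as the paper: existence, uniqueness and exactness of $\Tilde{F}$ are delegated to \cite[Lemma 2.7]{krause1997spectrum}, monoidality is verified by the same double filtered colimit computation comparing $\Tilde{F}(X\otimes Y)$ with $\Tilde{F}(X)\otimes\Tilde{F}(Y)$, and the equivalence statement is obtained by extending a quasi-inverse $G$ to $\Tilde{G}$. Your extra details (direct exactness via filtered colimits of short exact sequences, and uniqueness of colimit-preserving extensions to identify $\Tilde{G}\Tilde{F}\cong\mathrm{id}$) are just elaborations of steps the paper leaves to the citation or declares clear.
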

\begin{proof}
Let $F:\textnormal{\fp}(\mathcal{A}) \rightarrow \textnormal{\fp}(\mathcal{B})$ be an exact monoidal functor. According to \cite[Lemma 2.7]{krause1997spectrum}, we only need to prove that $\Tilde{F}$ is a monoidal functor. For any $X,Y \in \mathcal{A}$, there are isomorphisms
\begin{equation*}
\begin{split}
\Tilde{F}(X\otimes Y)&=\Tilde{F}(\Colim_{i\in I} \Colim_{j\in J} X_{i}\otimes Y_{j})\cong \Colim_{i\in I} \Colim_{j\in J} \Tilde{F}(X_{i}\otimes Y_{j}) \\
&=\Colim_{i\in I} \Colim_{j \in J} F(X_{i}\otimes Y_{j})\cong \Colim_{i\in I} \Colim_{j\in J} F(X_{i})\otimes F(Y_{j})\\  
&\cong \Tilde{F}(X)\otimes \Tilde{F}(Y)
\end{split}
\end{equation*} 
Here, each isomorphism is natural in $X$ and $Y$. It follows that $\Tilde{F}$ is also a monoidal functor. In addition, if $F$ is an equivalence, then assume that $G$ is the quasi-inverse of $F$. By the same way $G$ can be extended to a monoidal functor $\Tilde{G}$, and it is clear that $\Tilde{G}$ is the quasi-inverse of $\Tilde{F}$.
\end{proof}

It is part of Morita theory that for finite-dimensional algebras $A$ and $B$, $\mods\- A$ is equivalent to $\mods\- B$ if and only if $\Mod\- A$ is equivalent to $\Mod\- B$. We obtain a similar result in the case of locally finite tensor Grothendieck categories.

\begin{proposition}\label{Morita tensor eq}
Let $\mathcal{A}$ and $\mathcal{B}$ be locally finite tensor Grothendieck categories. Then the following statements are equivalent:
\begin{enumerate}
\item $\textnormal{\fp}(\mathcal{A})$ and $\textnormal{\fp}(\mathcal{B})$ are monoidal abelian equivalent;
\item $\mathcal{A}$ and $\mathcal{B}$ are monoidal abelian equivalent.
\end{enumerate}
\end{proposition}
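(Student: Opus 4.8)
The plan is to prove the two implications separately, with (1) $\Rightarrow$ (2) being essentially a quotation of Lemma \ref{extendsion lemma1} and (2) $\Rightarrow$ (1) requiring a short argument that a monoidal abelian equivalence must preserve finitely presented objects.

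For (1) $\Rightarrow$ (2) I would simply invoke Lemma \ref{extendsion lemma1}: a monoidal abelian equivalence $F\colon \fp(\mathcal{A})\to\fp(\mathcal{B})$ extends, uniquely up to isomorphism, to a functor $\widetilde{F}\colon\mathcal{A}\to\mathcal{B}$ commuting with filtered colimits, and by that lemma $\widetilde{F}$ is again a monoidal abelian equivalence. Nothing further is needed here.

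For (2) $\Rightarrow$ (1), let $G\colon\mathcal{A}\to\mathcal{B}$ be a monoidal abelian equivalence with quasi-inverse $G'\colon\mathcal{B}\to\mathcal{A}$, which is then also a monoidal abelian equivalence. First I would show $G$ restricts to a functor $\fp(\mathcal{A})\to\fp(\mathcal{B})$. Since $G$ and $G'$ are equivalences they preserve all colimits, in particular filtered ones. Hence for $A\in\fp(\mathcal{A})$ and any filtered diagram $(B_i)_{i\in I}$ in $\mathcal{B}$,
\begin{align*}
\Hom_{\mathcal{B}}(G(A),\Colim_{i}B_i)&\cong \Hom_{\mathcal{A}}(A,G'(\Colim_i B_i))\cong \Hom_{\mathcal{A}}(A,\Colim_i G'(B_i))\\
&\cong \Colim_i\Hom_{\mathcal{A}}(A,G'(B_i))\cong \Colim_i\Hom_{\mathcal{B}}(G(A),B_i),
\end{align*}
using that $G'$ commutes with filtered colimits and that $A$ is finitely presented; so $G(A)\in\fp(\mathcal{B})$. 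Symmetrically $G'$ sends $\fp(\mathcal{B})$ into $\fp(\mathcal{A})$. Thus $G|_{\fp(\mathcal{A})}\colon\fp(\mathcal{A})\to\fp(\mathcal{B})$ is well-defined, fully faithful (inherited from $G$), and essentially surjective (for $B\in\fp(\mathcal{B})$ we have $G'(B)\in\fp(\mathcal{A})$ with $G(G'(B))\cong B$), hence an equivalence of abelian categories.

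It then remains to check that this restriction is monoidal. By Definition \ref{L.N.T.G cat}, $\fp(\mathcal{A})$ is a finite tensor category, and by the construction of the tensor product on $\mathcal{A}$ (take trivial filtered colimits) the inclusion $\fp(\mathcal{A})\hookrightarrow\mathcal{A}$ is monoidal; in particular $\fp(\mathcal{A})$ is closed under $\otimes$ inside $\mathcal{A}$, and likewise for $\mathcal{B}$. Therefore the monoidal coherence isomorphisms $G(X)\otimes G(Y)\xrightarrow{\cong}G(X\otimes Y)$ and $\mathbbm{1}\cong G(\mathbbm{1})$ of $G$ restrict to isomorphisms living in $\fp(\mathcal{B})$, so $G|_{\fp(\mathcal{A})}$ is a monoidal functor, hence a monoidal abelian equivalence, proving (1). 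The only step demanding care is this last one — verifying that no coherence datum of the (a priori non-strict) monoidal structure on $G$ leaves the finitely presented subcategory — and it is handled precisely by the fact that $\fp(\mathcal{A})$ and $\fp(\mathcal{B})$ are tensor subcategories of $\mathcal{A}$ and $\mathcal{B}$; beyond that the argument is routine.
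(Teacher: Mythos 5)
Your proposal is correct and follows essentially the same route as the paper: (1) $\Rightarrow$ (2) by quoting Lemma \ref{extendsion lemma1}, and (2) $\Rightarrow$ (1) by showing the equivalence preserves finitely presented objects via a filtered-colimit Hom computation (you route it through the quasi-inverse, the paper lifts the diagram along the equivalence, which is the same idea) and then restricting, using that $\fp(\mathcal{A})$ and $\fp(\mathcal{B})$ are tensor subcategories to see the restriction is monoidal.
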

\begin{proof}
By Lemma \ref{extendsion lemma1}, $(1) \Rightarrow (2)$. For the other implication, let $\Tilde{F}:\mathcal{A} \rightarrow \mathcal{B}$ be a monoidal abelian equivalence with quasi-inverse $\Tilde{G}$. Suppose that $A\in \fp(\mathcal{A})$. The first thing we need to verify is that $\Tilde{F}(A)\in \fp(\mathcal{B})$. Since $\Tilde{F}$ is an equivalence, for any filtered category $I$ and objects $\{B_{i}\}_{i\in I}\in\mathcal{B}$, there exist objects $\{A_{i}\}_{i\in I}\in\mathcal{A}$ such that $\Tilde{F}(A_{i}) \cong B_{i}$ and $\Tilde{F}(\Colim\limits_{i\in I} A_{i}) \cong \Colim\limits_{i\in I} \Tilde{F}(A_{i})$. Then 
\begin{equation*}
\begin{split}
\Hom_{\mathcal{B}}(\Tilde{F}(A),\Colim_{i\in I} B_{i})&\cong \Hom_{\mathcal{B}}(\Tilde{F}(A),\Colim_{i\in I}\Tilde{F}(A_{i}))\\
& \cong \Hom_{\mathcal{B}}(\Tilde{F}(A),\Tilde{F}(\Colim_{i\in I}A_{i}))\\
(\text{by}\; \Tilde{F}\; \text{is an equivalence})\;& \cong \Hom_{\mathcal{A}}(A,\Colim_{i\in I} A_{i})\\
(\text{by}\; A\in \fp(\mathcal{A}))\;& \cong \Colim_{i\in I}\Hom_{\mathcal{A}}(A, A_{i})\\
(\text{by}\; \Tilde{F}\; \text{is an equivalence})\;& \cong \Colim_{i\in I} \Hom_{\mathcal{B}}(\Tilde{F}(A), B_{i}),
\end{split}
\end{equation*}
which means that $\Tilde{F}(A)\in \fp(\mathcal{B})$. Hence $\Tilde{F}(\fp(\mathcal{A})) \subseteq \fp(\mathcal{B})$. Now let $F:=\Tilde{F}|_{\fp(\mathcal{A})}$. Then $F$ is a fully faithful exact monoidal functor from $\fp(\mathcal{A})$ to $\fp(\mathcal{B})$ due to the fact that they are abelian subcategories closed under the tensor product in $\mathcal{A}$ and $\mathcal{B}$ respectively. For the same reason, the claim also holds for the functor $G:=\Tilde{G}|_{\fp(\mathcal{B})}$, thus $G$ is precisely the quasi-inverse of $F$, which implies that $F$ is an equivalence. 
\end{proof}

\begin{example}\label{hopf morita}
Let $H$ be a finite-dimensional Hopf algebra. Since $\fp(\Mod\-H)=\mods\-H$ is a finite tensor category, $\Mod\-H$ is a locally finite tensor Grothendieck category. Suppose $H'$ is another finite-dimensional Hopf algebra. By Proposition \ref{Morita tensor eq}, $\Mod\-H$ and $\Mod\-H'$ are monoidal abelian equivalent if and only if $\mods\-H$ and $\mods\-H'$ are monoidal abelian equivalent. 

\end{example}
\begin{remark}\label{gauge}
    Let $H$ and $H'$ be finite-dimensional Hopf algebras. According to \cite[Definition \uppercase\expandafter{\romannumeral15}.3.3]{kassel1995quantum}, $H$ and $H'$ are said to be \textbf{gauge equivalent}, if there is a gauge transformation $J$ of $H$, such that $H'$ and $H^J$ are isomorphic as bialgebras. By \cite[Theorem 2.2]{ng2008central} $\mods\-H$ and $\mods\-H'$ are monoidal abelian equivalent if and only if $H$ is gauge equivalent to $H'$.

\end{remark}

\subsection{Tensor reducedness}
In this subsection, we mainly discuss tensor reducedness of tensor categories and locally finite tensor Grothendieck categories, which leads to the main results in Section \ref{Morita/Rickard eq}.

Recall that for an abelian category $\mathcal{A}$, a subcategory $\mathcal{S}$ of $\mathcal{A}$ is called a \textbf{Serre subcategory} if $\mathcal{S}$ is closed under taking subobjects, quotients and extensions. For a monoidal category $\mathcal{C}$, a subcategory $\mathcal{I}$ is called a \textbf{left} (resp. \textbf{right}) \textbf{tensor ideal} if $Y \otimes X \in \mathcal{I}$ (resp. $X \otimes Y \in \mathcal{I}$) for any $X \in \mathcal{I}, Y\in \mathcal{C}$. If $\mathcal{I}$ is both a left and right tensor ideal, then it is called (\textbf{two-sided}) \textbf{tensor ideal}. Then a Serre subcategory $\mathcal{S}$ in a monoidal abelian category is called a \textbf{left} (resp. \textbf{right}, \textbf{two-sided}) \textbf{Serre tensor ideal} if it is a left (resp. right, two-sided) tensor ideal.

We first present examples of Serre tensor ideals. Let $\mathcal{A}$ be a tensor category and $X$ be a non-zero object in $\mathcal{A}$. The \textbf{left annihilator} \textnormal{(}resp. \textbf{right annihilator}\textnormal{)} of $X$ is denoted by
$$\textnormal{\text{l.Ann}}(X):=\{A\in \mathcal{A}\;|\; A\otimes X=0\}\;\;(\text{resp.} \;\;\textnormal{\text{r.Ann}}(X):=\{A\in \mathcal{A}\;|\; X\otimes A=0\}),$$
which is a full subcategory of $\mathcal{A}$.

\begin{lemma}\label{tensor ideal lemma}
Let $\mathcal{A}$ be a tensor category and $X$ be a non-zero object in $\mathcal{A}$.  Then $\textnormal{\text{l.Ann}}(X)$ \textnormal{(}resp. $\textnormal{\text{r.Ann}}(X)$\textnormal{)} is a left \textnormal{(}resp. right\textnormal{)} Serre tensor ideal. 

\end{lemma}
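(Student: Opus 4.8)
The plan is to verify the three defining conditions of a left Serre tensor ideal for $\mathrm{l.Ann}(X)$; the proof for $\mathrm{r.Ann}(X)$ is entirely symmetric, swapping the order of tensor factors. First I would check that $\mathrm{l.Ann}(X)$ is closed under the tensor action on the left, i.e. that $Y \otimes A \in \mathrm{l.Ann}(X)$ for every $A \in \mathrm{l.Ann}(X)$ and $Y \in \mathcal{A}$. This is immediate from associativity of the monoidal structure: $(Y \otimes A) \otimes X \cong Y \otimes (A \otimes X) \cong Y \otimes 0 = 0$, using that tensoring with $0$ gives $0$ (which holds because $\otimes$ is biexact, hence additive, in each variable).

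Next I would verify the Serre conditions. For closure under subobjects and quotients: given a short exact sequence $0 \to A' \to A \to A'' \to 0$ with $A \in \mathrm{l.Ann}(X)$, applying the functor $- \otimes X$, which is exact by biexactness of the tensor product (\cite[Proposition 4.2.1]{etingof2016tensor}), yields an exact sequence $0 \to A' \otimes X \to A \otimes X \to A'' \otimes X \to 0$. Since the middle term is $0$, both outer terms vanish, so $A', A'' \in \mathrm{l.Ann}(X)$. For closure under extensions: given the same short exact sequence with $A', A'' \in \mathrm{l.Ann}(X)$, the exact sequence above has vanishing outer terms, forcing $A \otimes X = 0$, so $A \in \mathrm{l.Ann}(X)$. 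This shows $\mathrm{l.Ann}(X)$ is a Serre subcategory, and combined with the left tensor ideal property it is a left Serre tensor ideal.

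There is essentially no obstacle here: every step reduces to biexactness (equivalently, exactness in each variable) of $\otimes$ together with associativity, both of which are part of the definition of a tensor category and are recalled in the excerpt. The one minor point worth stating explicitly is why $Z \otimes 0 = 0$ and $0 \otimes Z = 0$ — this follows since an exact (in particular additive) functor sends the zero object to the zero object. I would therefore write the proof compactly, treating $\mathrm{l.Ann}(X)$ in detail and remarking that $\mathrm{r.Ann}(X)$ follows by the same argument with the roles of the two tensor variables interchanged.
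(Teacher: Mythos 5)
Your proposal is correct and matches the paper's own argument: both verify the Serre condition by applying the exact functor $-\otimes X$ to a short exact sequence and verify the left tensor ideal condition via associativity, $(Y\otimes Z)\otimes X\cong Y\otimes(Z\otimes X)=0$. Your explicit remark that additivity forces $Y\otimes 0=0$ is a small clarification the paper leaves implicit, but the proof is essentially identical.
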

\begin{proof}
We only prove that $\textnormal{\text{l.Ann}}(X)$ is a left Serre tensor ideal. Firstly, we need to check that it is a Serre subcategory. Let 
$$0\rightarrow Y_{0} \rightarrowtail Y_{1} \twoheadrightarrow Y_{2} \rightarrow 0$$
be a short exact sequence in $\mathcal{A}$. By applying the functor $- \otimes X$, there is an exact sequence 
$$0\rightarrow Y_{0}\otimes X \rightarrowtail Y_{1}\otimes X \twoheadrightarrow Y_{2}\otimes X \rightarrow 0.$$
Hence $Y_{1} \otimes X$ is zero if and only if both $Y_{0} \otimes X$ and $Y_{2} \otimes X$ are zero, which implies that $\text{l.Ann}(X)$ is a Serre subcategory. Next, for any $Y\in \mathcal{A}$ and $Z\in \text{l.Ann}(X)$, we can see that $(Y\otimes Z)\otimes X \cong  Y\otimes (Z \otimes X)=0$. Thus $Y\otimes Z\in \text{l.Ann}(X)$.
\end{proof}

Inspired by \textnormal{\cite[Proposition 4.11]{zuo2024quotient}}, we obtain the following statement. 

\begin{proposition}\label{trivial ideal}
Let $\mathcal{A}$ be a finite tensor category. Then all the left and right Serre tensor ideals of $\mathcal{A}$ are trivial. As a consequence, $\mathcal{A}$ is tensor reduced. 

\end{proposition}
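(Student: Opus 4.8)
The plan is to show that a nonzero left Serre tensor ideal $\mathcal{I}$ of a finite tensor category $\mathcal{A}$ must be all of $\mathcal{A}$, and symmetrically for right ideals; tensor reducedness is then immediate, since if $X \otimes X = 0$ then the left Serre tensor ideal generated by $X$ (the Serre subcategory generated by all $Y \otimes X$, $Y \in \mathcal{A}$) is proper unless it is everything, while it contains $X$, so forcing $X = 0$. Actually, the cleanest route for the consequence is: if $X \otimes X = 0$ then $X \in \mathrm{l.Ann}(X)$; by Lemma \ref{tensor ideal lemma}, $\mathrm{l.Ann}(X)$ is a left Serre tensor ideal, so by the first assertion it is either $0$ or $\mathcal{A}$. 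If it is $\mathcal{A}$, then $\mathbbm{1} \otimes X = X = 0$; if it is $0$, then from $X \in \mathrm{l.Ann}(X)$ we again get $X = 0$. Either way $X = 0$.

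For the main assertion, I would argue as follows. Let $\mathcal{I} \neq 0$ be a left Serre tensor ideal and pick a nonzero $X \in \mathcal{I}$. Since $\mathcal{A}$ is a finite (hence length) category, $X$ has a simple subquotient $S$, and as $\mathcal{I}$ is a Serre subcategory it contains $S$. Now I want to show every simple object, and hence (again by the Serre property and finite length) every object, lies in $\mathcal{I}$. The key point is rigidity: for the simple $S \in \mathcal{I}$, the evaluation map $\mathrm{ev}_S \colon S^{\ast} \otimes S \to \mathbbm{1}$ is nonzero (in a tensor category the (co)evaluations of a nonzero object are nonzero, because the triangle identities would otherwise force $\mathrm{id}_S = 0$), and since $\mathrm{End}_{\mathcal{A}}(\mathbbm{1}) \cong \k$ with $\mathbbm{1}$ simple, a nonzero map with target $\mathbbm{1}$ is an epimorphism. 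Thus $\mathbbm{1}$ is a quotient of $S^{\ast} \otimes S$, which lies in $\mathcal{I}$ because $\mathcal{I}$ is a left tensor ideal and $S \in \mathcal{I}$; by the Serre (quotient-closed) property $\mathbbm{1} \in \mathcal{I}$. Finally, for any object $Y \in \mathcal{A}$ we have $Y \cong Y \otimes \mathbbm{1} \in \mathcal{I}$ since $\mathcal{I}$ is a left tensor ideal, so $\mathcal{I} = \mathcal{A}$. The right-ideal case is entirely symmetric, using the coevaluation $\mathrm{coev}_{{}^{\ast}S}$ or equivalently the evaluation for the right-dual pairing to realize $\mathbbm{1}$ as a quotient of $S \otimes {}^{\ast}S$.

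The step I expect to be the main obstacle — or at least the one needing the most care — is verifying that $\mathbbm{1}$ is genuinely a quotient of $S^{\ast} \otimes S$ inside $\mathcal{A}$, i.e. that $\mathrm{ev}_S$ is epic. This rests on two facts: first, $\mathrm{ev}_S \neq 0$, which follows from the rigidity axiom (if $\mathrm{ev}_S = 0$ then the composite $S \xrightarrow{\mathrm{coev}_S \otimes \mathrm{id}} S \otimes S^{\ast} \otimes S \xrightarrow{\mathrm{id} \otimes \mathrm{ev}_S} S$ is $0$, contradicting that it equals $\mathrm{id}_S$ and $S \neq 0$); and second, that a nonzero morphism into the simple object $\mathbbm{1}$ is automatically surjective, which uses $\mathbbm{1}$ simple (equivalently $\mathrm{End}(\mathbbm{1}) = \k$ a field together with $\mathbbm{1} \neq 0$), a standard fact in tensor categories \cite[Section 4.3]{etingof2016tensor}. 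One should also note that $S^{\ast} \otimes S$ makes sense and that $S^{\ast} \in \mathcal{A}$ — this is exactly where rigidity of the \emph{whole} category $\mathcal{A}$ (not just of $\mathbbm{1}$) is used, and it is also where the argument would break for a merely monoidal abelian category. Everything else is bookkeeping with the Serre and tensor-ideal axioms and the finite length hypothesis.
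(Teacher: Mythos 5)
Your proof is correct and follows essentially the same route as the paper: realize $\mathbbm{1}$ as a quotient of $S^{\ast}\otimes S$ (the paper uses $X^{\ast}\otimes X$ for an arbitrary nonzero $X$, so your reduction to a simple subquotient is an unnecessary but harmless extra step), conclude $\mathbbm{1}\in\mathcal{I}$ by the Serre property and hence $\mathcal{I}=\mathcal{A}$, and then deduce tensor reducedness from Lemma \ref{tensor ideal lemma} applied to $\mathrm{l.Ann}(X)$. The points you flag as delicate (nonvanishing of $\mathrm{ev}$ via the triangle identity, simplicity of $\mathbbm{1}$ from \cite[Theorem 4.3.8]{etingof2016tensor}) are exactly the ingredients the paper uses.
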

\begin{proof}
Let $\mathcal{S}$ be a non-zero left Serre tensor ideal of $\mathcal{A}$. Then there exists a non-zero object $X$ in $\mathcal{S}$. We consider the object $X^{\ast} \otimes X$, which is contained in $\mathcal{S}$. Since $X$ is non-zero and the following composition is the identity morphism of $X$
$$X \stackrel{\coev_{X} \otimes \text{id}_{X}}{\xrightarrow{\hspace*{1.5cm}}} X\otimes X^{\ast} \otimes X  \stackrel{\text{id}_{X}\otimes \ev_{X}}{\xrightarrow{\hspace*{1.5cm}}} X,$$
$X^{\ast} \otimes X$ is also non-zero. The assumption that $\mathcal{A}$ is a finite tensor category implies that $\mathbbm{1}$ is a simple object in $\mathcal{A}$ (\cite[Theorem 4.3.8]{etingof2016tensor}). Hence the evaluation $\text{ev}_{X}:X^{\ast} \otimes X\rightarrow \mathbbm{1}$ is an epimorphism. Note that $\mathcal{S}$ is a Serre subcategory, therefore $\mathbbm{1}$ is an object in $\mathcal{S}$. Thus $\mathcal{S}=\mathcal{A}$. 
\par Let $X$ be a non-zero object in $\mathcal{A}$. According to Lemma \ref{tensor ideal lemma}, $\text{l.Ann}(X)$ is a left Serre tensor ideal of $\mathcal{A}$. Consequently, $\text{l.Ann}(X)$ is either 0 or $\mathcal{A}$. $X$ being non-zero implies that $\text{l.Ann}(X)= 0$, which shows that $X\otimes X \neq 0$.  
\end{proof}

With the necessary groundwork completed, now we can finish the proof of the main result on tensor reducedness. 

\begin{theorem}\label{0tensor lemma}
Let $\mathcal{A}$ be a locally finite tensor Grothendieck category. Then $\mathcal{A}$ is tensor reduced.   
\end{theorem}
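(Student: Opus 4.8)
The plan is to reduce to the tensor reducedness of the finite tensor category $\fp(\mathcal{A})$, already proved in Proposition \ref{trivial ideal}, by locating inside an arbitrary nonzero object a \emph{simple} subobject, which necessarily lies in $\fp(\mathcal{A})$.

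Since the reverse implication is trivial, fix a nonzero $X\in\mathcal{A}$; the goal is $X\otimes X\neq 0$. First I would check that every finitely presented object has finite length also as an object of $\mathcal{A}$, not merely of $\fp(\mathcal{A})$. For this, recall that in a locally coherent Grothendieck category every subobject of a finitely presented object is a directed union of finitely presented subobjects; since a simple object $S$ of $\fp(\mathcal{A})$ has no nonzero proper finitely presented subobjects, it has no nonzero proper subobjects in $\mathcal{A}$ at all, i.e. it remains simple in $\mathcal{A}$. A composition series of a finitely presented object in $\fp(\mathcal{A})$ is then a composition series in $\mathcal{A}$, so finitely presented objects have finite length in $\mathcal{A}$. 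Now, because $\fp(\mathcal{A})$ generates $\mathcal{A}$ and $X\neq 0$, some finitely presented object maps nontrivially to $X$; the image of such a map is a nonzero quotient of a finitely presented object, hence a nonzero finite-length subobject of $X$, and therefore contains a simple subobject $S\hookrightarrow X$ with $S\in\fp(\mathcal{A})$.

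It remains to tensor up. By Proposition \ref{trivial ideal} the finite tensor category $\fp(\mathcal{A})$ is tensor reduced, so $S\otimes S\neq 0$; and this object is unchanged when $\otimes$ is computed in $\mathcal{A}$, since the tensor product of $\mathcal{A}$ restricts to that of $\fp(\mathcal{A})$ (take constant filtered systems in the colimit formula). By Proposition \ref{monoidal structure} the tensor product on $\mathcal{A}$ is biexact, hence preserves monomorphisms in each variable, so applying $-\otimes S$ and then $X\otimes-$ to $S\hookrightarrow X$ produces a monomorphism $S\otimes S\hookrightarrow X\otimes X$. As $S\otimes S\neq 0$, this forces $X\otimes X\neq 0$, which completes the argument.

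The only delicate point is the first step — verifying that passing from $\fp(\mathcal{A})$ to $\mathcal{A}$ preserves simplicity and finite length — which rests entirely on standard structure theory of locally coherent Grothendieck categories (as in the references of Herzog and Crawley-Boevey already cited) rather than on anything specific to the tensor setting. Everything after a simple subobject has been produced is immediate from Propositions \ref{trivial ideal} and \ref{monoidal structure}.
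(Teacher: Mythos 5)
Your argument is correct and follows essentially the same route as the paper: use that $\fp(\mathcal{A})$ generates $\mathcal{A}$ to find a nonzero map from a finitely presented object into the given nonzero object $X$, extract from it a nonzero object of $\fp(\mathcal{A})$ embedded in $X$, invoke Proposition \ref{trivial ideal} for tensor reducedness of $\fp(\mathcal{A})$, and use biexactness of $\otimes$ (Proposition \ref{monoidal structure}) to get a monomorphism from its nonzero tensor square into $X\otimes X$. The only difference is that the paper works directly with the image $\Im(f)$ of such a map, taking it to be finitely presented, rather than descending to a simple subobject; your additional finite-length and simplicity analysis is therefore not strictly needed, though it does supply a careful justification that this image contains a nonzero object of $\fp(\mathcal{A})$.
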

\begin{proof}
Let $A$ be a non-zero object in $\mathcal{A}$. Since $\fp(\mathcal{A})$ generates $\mathcal{A}$, there exists an object $X$ in $\fp(\mathcal{A})$ such that $\Hom_{\mathcal{A}}(X,A)\neq 0$. Let $f\in\Hom_{\mathcal{A}}(X,A)$ be a non-zero morphism, so $\Im(f)\neq 0$. $\fp(\mathcal{A})$ is closed under quotients as it is an abelian subcategory of $\mathcal{A}$. Hence $\Im(f)$ is finitely presented as well.
$$\begin{tikzcd}
X && A \\
& {\Im(f)}
\arrow["{f}", from=1-1, to=1-3]
\arrow["{}"', two heads, from=1-1, to=2-2]
\arrow["{}"', tail, from=2-2, to=1-3]
\end{tikzcd}$$
According to Proposition \ref{trivial ideal}, $\fp(\mathcal{A})$ is tensor reduced, hence $\Im(f)\otimes \Im(f)\neq 0$. Then there is a non-zero monomorphism 
$$f\otimes f: \Im(f)\otimes \Im(f) \rightarrowtail A \otimes A,$$
which implies $A\otimes A \neq 0$.
\end{proof}

We end this section by presenting two examples. The first one illustrates that there is a tensor reduced monoidal abelian category that is not a tensor category. The second one is an example of a monoidal abelian category which is not tensor reduced.

\begin{example}\label{rep exacmple}
Let $Q=(Q_{0},Q_{1})$ be a finite acyclic quiver and $|Q_{0}|\geqslant 2$. Consider the category $\text{rep}_{\mathbbm{k}}(Q)$ of finite-dimensional representations of the quiver of type $Q$. According to \cite[Chapter 3, Definition 1.1]{assem2006elements}, such representations are defined by the following data:
\begin{enumerate}
\item to each point $a$ in $Q_{0}$ is associated a finite-dimensional $\mathbbm{k}$-vector space $V_{a}$;
\item to each arrow $\alpha:a\rightarrow b$ in $Q_{1}$ is associated a $\mathbbm{k}$-linear map $\phi_{\alpha}:V_{a}\rightarrow V_{b}$.
\end{enumerate}
Such a representation is denoted by $(V_{a},\phi_{\alpha})_{a\in Q_{0},\alpha\in Q_{1}}$ or simply $(V_{a},\phi_{\alpha})$. The tensor product of two representations is defined by the formula
$$(V_{a},\phi_{\alpha}) \otimes (W_{a},\varphi_{\alpha}):=(V_{a} \otimes_{\k} W_{a}, \phi_{\alpha}\otimes_{\k} \varphi_{\alpha}).$$
By construction, $\text{rep}_{\mathbbm{k}}(Q)$ is a monoidal abelian category with biexact tensor product. The category $\text{rep}_{\mathbbm{k}}(Q)$ is not rigid, as its unit object $(\mathbbm{k}, \text{id})$ is not simple by \cite[Theorem 4.3.8]{etingof2016tensor}. Note that such a finite-dimensional representation $(V_{a},\phi_{\alpha})$ 
 is the zero object if and only if $V_{a}=0$ for all $a\in Q_{0}$. Hence, $\text{rep}_{\mathbbm{k}}(Q)$ is tensor reduced.  
\end{example}

\begin{example}
Let $Q$ be the quiver of type $A_{2}$ that is $Q:=1\longrightarrow 2$, and $\mathcal{A}:=\text{rep}_{\mathbbm{k}}(Q)$. Now consider the functor category $\text{Fun}[\mathcal{A},\mathcal{A}]$ whose monoidal structure is given by the composition of functors (\cite[Example 2.3.12]{etingof2016tensor}). For any object $M\in \mathcal{A}$, since $\mathcal{A}$ is Krull-Schmidt, the decomposition $M\cong S_{1}^{m_{1}}\bigoplus S_{2}^{m_{2}} \bigoplus P_{2}^{m_{3}}$ holds for $m_1, m_2, m_3\in \mathbbm{Z}_{\geqslant 0}$ where
$$S_{1}=(\mathbbm{k}, 0, 0),\;\;S_
{2}=(0, \mathbbm{k}, 0),\;\;P_{2}=(\mathbbm{k}, \mathbbm{k}, \text{id})$$
Now we define an additive functor $F$ as follows:
$$F(S_{1})=F(S_{2})=0,\;F(P_{2})=S_{2},$$
and for any $M,N\in \text{rep}_{k}(Q)$ with $M\cong S_{1}^{m_{1}}\bigoplus S_{2}^{m_{2}} \bigoplus P_{2}^{m_{3}}$, $N\cong S_{1}^{n_{1}}\bigoplus S_{2}^{n_{2}} \bigoplus P_{2}^{n_{3}}$, the $\mathbbm{k}$-linear map
$$\Hom_{\mathcal{A}}(M,N)\longrightarrow \Hom_{\mathcal{A}}(F(M),F(N))$$
is given by
$$
\begin{pmatrix}
m_{1}n_{1}f_{11} & 0 & 0\\ 
0 & m_{2}n_{2}f_{22} & m_{2}n_{3}f_{23}\\
m_{3}n_{1}f_{31} & 0 & m_{3}n_{3}f_{33}
\end{pmatrix}
\mapsto
\begin{pmatrix}
0 & 0 & 0\\ 
0 & m_{2}n_{2}f_{22} & 0\\
0 & 0 & 0
\end{pmatrix},
$$
where $f_{23}\in \Hom_{\mathcal{A}}(S_{2}, P_{2})$, $f_{31}\in \Hom_{\mathcal{A}}(P_{2}, S_{1})$, $f_{33}\in \Hom_{\mathcal{A}}(P_{2}, P_{2})$ and $f_{ii}\in \Hom_{\mathcal{A}}(S_{i}, S_{i})$ when $i\in \{1, 2\}$. Hence $F$ is indeed an additive functor. By definition $F\circ F=0$, which implies that $\text{Fun}[\mathcal{A},\mathcal{A}]$ is not tensor reduced.
\end{example}

\section{Monoidal derived equivalences}\label{Morita/Rickard eq}
In this section we will combine contents of Section \ref{t-stru} and \ref{L.N.T.R.G} to prove the main results. Furthermore, a result on monoidal stable categories is obtained that is
analogous to a theorem of Rickard (see \cite{rickard1989derived}).

\subsection{Monoidal abelian equivalences and monoidal derived equivalences}

Let $\mathcal{A}$ be a locally finite tensor Grothendieck category. According to Example \ref{key example}, both $\textnormal{\Der}^{b}(\textnormal{\fp}(\mathcal{A}))$ and $\textnormal{\Der}^{b}(\mathcal{A})$ are monoidal triangulated categories. In the following, we use $\Sigma$ to denote the shift functors in both $\textnormal{\Der}^{b}(\textnormal{\fp}(\mathcal{A}))$ and $\textnormal{\Der}^{b}(\mathcal{A})$. The aim of this section is to prove the following statement.

\begin{theorem}\emph{(Theorem \ref{Theorem B})}\label{4 eq}
Let $\mathcal{A}$ and $\mathcal{B}$ be locally finite tensor Grothendieck categories. Then the following are equivalent:
\begin{enumerate}
\item $\textnormal{\fp}(\mathcal{A})$ and $\textnormal{\fp}(\mathcal{B})$ are monoidal abelian equivalent;
\item $\mathcal{A}$ and $\mathcal{B}$ are monoidal abelian equivalent;
\item $\textnormal{\Der}^{b}(\textnormal{\fp}(\mathcal{A}))$ and $\textnormal{\Der}^{b}(\textnormal{\fp}(\mathcal{B}))$ are monoidal triangulated equivalent;
\item $\textnormal{\Der}^{b}(\mathcal{A})$ and $\textnormal{\Der}^{b}(\mathcal{B})$ are monoidal triangulated equivalent.

\end{enumerate}
\end{theorem}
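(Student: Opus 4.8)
The plan is to establish the cycle $(1)\Leftrightarrow(2)$, $(1)\Rightarrow(3)\Rightarrow(1)$ and $(2)\Rightarrow(4)\Rightarrow(3)$, which together yield all four equivalences. The equivalence $(1)\Leftrightarrow(2)$ is precisely Proposition \ref{Morita tensor eq}. For the passage to derived categories $(1)\Rightarrow(3)$ and $(2)\Rightarrow(4)$, I would use that an equivalence of abelian categories is exact, that $\textnormal{\fp}(\mathcal{A})$, $\textnormal{\fp}(\mathcal{B})$ and---by Proposition \ref{monoidal structure}---also $\mathcal{A}$, $\mathcal{B}$ carry biexact tensor products, and then apply the construction of Example \ref{monoidal tri cat1} functorially: an exact monoidal equivalence induces, termwise, a monoidal triangulated equivalence of bounded derived categories.

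The first substantial implication is $(3)\Rightarrow(1)$, obtained from Corollary \ref{unique u.t.e cor}. By Example \ref{key example} the standard t-structures $\mathbbm{t}_{\textnormal{\fp}(\mathcal{A})}$ and $\mathbbm{t}_{\textnormal{\fp}(\mathcal{B})}$ are monoidal t-structures with $0$ in their deviations, and by Proposition \ref{trivial ideal} together with Proposition \ref{tensor reduced prop} they are tensor reduced. Given a monoidal triangulated equivalence $F\colon\textnormal{\Der}^{b}(\textnormal{\fp}(\mathcal{A}))\to\textnormal{\Der}^{b}(\textnormal{\fp}(\mathcal{B}))$, I would transport $\mathbbm{t}_{\textnormal{\fp}(\mathcal{A})}$ across $F$ to a monoidal t-structure $\mathbbm{t}$ on $\textnormal{\Der}^{b}(\textnormal{\fp}(\mathcal{B}))$ with heart $F(\textnormal{\fp}(\mathcal{A}))$, and then check that $\mathbbm{t}$ and $\mathbbm{t}_{\textnormal{\fp}(\mathcal{B})}$ are equivalent so that Corollary \ref{unique u.t.e cor}(2) applies. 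This last point is where finiteness enters: both hearts $F(\textnormal{\fp}(\mathcal{A}))\simeq\textnormal{\fp}(\mathcal{A})$ and $\textnormal{\fp}(\mathcal{B})$ are finite abelian categories, hence length categories with finitely many simples, so there is a uniform $N$ with every simple of each heart lying in the cohomological range $[-N,N]$ of the other t-structure; since every object of a finite abelian category is an iterated extension of simples and cohomological functors send triangles to long exact sequences, one deduces $\mathcal{D}^{\leqslant 0}_{\mathbbm{t}}\subseteq\mathcal{D}^{\leqslant N}_{\mathbbm{t}_{\textnormal{\fp}(\mathcal{B})}}$ and, symmetrically and after a shift, $\mathcal{D}^{\leqslant -N'}_{\mathbbm{t}_{\textnormal{\fp}(\mathcal{B})}}\subseteq\mathcal{D}^{\leqslant 0}_{\mathbbm{t}}$, which is exactly the asserted equivalence. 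Corollary \ref{unique u.t.e cor}(2) then says $F$ restricts to a monoidal abelian equivalence $\textnormal{\fp}(\mathcal{A})\to\textnormal{\fp}(\mathcal{B})$, giving $(1)$.

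For $(4)\Rightarrow(3)$---which, via the steps above, also completes $(4)\Rightarrow(2)$---the sandwiching argument above is unavailable because the hearts of the two standard t-structures on $\textnormal{\Der}^{b}(\mathcal{B})$ are Grothendieck categories, not length categories. Instead I would pass to rigid objects, viewing $\textnormal{\Der}^{b}(\textnormal{\fp}(\mathcal{A}))$ as a full monoidal triangulated subcategory of $\textnormal{\Der}^{b}(\mathcal{A})$ (legitimate since $\mathcal{A}$ is locally coherent). The key claim is that the rigid objects of $\textnormal{\Der}^{b}(\mathcal{A})$ are exactly $\textnormal{\Der}^{b}(\textnormal{\fp}(\mathcal{A}))$. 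For one inclusion: a rigid object of $\mathcal{A}$ is finitely presented, since $\mathbbm{1}$ is finitely presented and, by Proposition \ref{monoidal structure}, the tensor product commutes with filtered colimits, so $\Hom_{\mathcal{A}}(X,-)\cong\Hom_{\mathcal{A}}(\mathbbm{1},(-)\otimes X^{\ast})$ preserves filtered colimits; and a rigid object of $\textnormal{\Der}^{b}(\mathcal{A})$ has all cohomology objects rigid in $\mathcal{A}$, as one reads off the Künneth isomorphism of Theorem \ref{K.lemma} (applicable because the standard t-structure on $\textnormal{\Der}^{b}(\mathcal{A})$ is a tensor reduced monoidal t-structure with $0$ in its deviation, by Example \ref{key example} and Theorem \ref{0tensor lemma}) by evaluating the zig-zag identities for $X$ and its left and right duals in top and bottom degree. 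Conversely $\textnormal{\fp}(\mathcal{A})$ is a finite tensor category, hence rigid, and the explicit dual-complex construction of the last example of Section \ref{t-stru} shows every bounded complex over it is rigid, so $\textnormal{\Der}^{b}(\textnormal{\fp}(\mathcal{A}))$ consists of rigid objects. Since monoidal triangulated functors preserve rigidity (Remark \ref{rigid under monoidal}), a monoidal triangulated equivalence $\textnormal{\Der}^{b}(\mathcal{A})\to\textnormal{\Der}^{b}(\mathcal{B})$ carries rigid objects to rigid objects in both directions and therefore restricts to a monoidal triangulated equivalence $\textnormal{\Der}^{b}(\textnormal{\fp}(\mathcal{A}))\to\textnormal{\Der}^{b}(\textnormal{\fp}(\mathcal{B}))$, which is $(3)$.

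The main obstacle I anticipate is controlling the transported t-structure. In the $\textnormal{\fp}$-setting this is the cohomological-amplitude sandwiching, routine once finiteness is in hand; in the Grothendieck setting it is precisely the failure of that argument that forces the detour through rigid objects, and inside that detour the genuinely delicate point is showing that a rigid object of $\textnormal{\Der}^{b}(\mathcal{A})$ has finitely presented cohomology---that is, that rigidity of a complex can be detected degree by degree on homology through the Künneth formula.
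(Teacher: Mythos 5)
Your proposal is correct in outline, and for the passage from statement (4) it takes a genuinely different route from the paper. The common part: $(1)\Leftrightarrow(2)$ is Proposition \ref{Morita tensor eq}, $(1)\Rightarrow(3)$ and $(2)\Rightarrow(4)$ are the derived-functor argument of Proposition \ref{1to3 and 2to4}, and your $(3)\Rightarrow(1)$ is exactly Proposition \ref{3to1 and 4to2}(1): your cohomological-amplitude sandwich is in effect an inline proof of Lemma \ref{b.on.f.d} (which the paper simply cites) before invoking Corollary \ref{unique u.t.e cor}(2), just as the paper does. The divergence is at (4): the paper proves $(4)\Rightarrow(2)$ directly, transporting the standard t-structure and appealing to Lemma \ref{bounded in big} (every bounded t-structure on $\Der^{b}$ of a Grothendieck category is equivalent to the standard one, proved via a generator, an injective cogenerator and the results of Psaroudakis--Vit\'oria), so that Corollary \ref{unique u.t.e cor}(2) applies uniformly in both the small and the big setting; you instead reduce (4) to (3) by identifying $\Der^{b}(\fp(\mathcal{A}))$ inside $\Der^{b}(\mathcal{A})$ as the full subcategory of rigid objects. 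Your route avoids Lemma \ref{bounded in big} entirely and yields an intrinsic, purely monoidal characterization of the small derived category (of independent interest), but it needs two facts the paper never has to address: that $\Der^{b}(\fp(\mathcal{A}))\to\Der^{b}(\mathcal{A})$ is fully faithful with essential image the complexes with finitely presented cohomology (true for locally coherent Grothendieck categories by the standard resolution criterion, but it should be stated and justified), and that the cohomology objects of a rigid complex are rigid, hence finitely presented. The latter does go through: applying $H^{n}$ to the two zig-zag identities and using the naturality (and compatibility with associativity, a coherence check you leave implicit) of the K\"unneth isomorphism of Theorem \ref{K.lemma}, only the component $(p,q,r)=(n,-n,n)$ survives, so the $(n,-n)$-components of $H^{0}(\coev)$ and $H^{0}(\ev)$ exhibit $H^{-n}$ of the dual as a dual of $H^{n}(X)$ --- in every degree, not only the top and bottom ones --- and a rigid object $X$ of $\mathcal{A}$ is finitely presented because $\Hom_{\mathcal{A}}(X,-)\cong\Hom_{\mathcal{A}}(\mathbbm{1},(-)\otimes X^{\ast})$, $\mathbbm{1}$ is finitely presented and $\otimes$ commutes with filtered colimits. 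In short, both arguments are sound: the paper's is shorter and works without any rigidity input on the big category, while yours trades Lemma \ref{bounded in big} for the rigid-object characterization, at the cost of the two verifications just named.
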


We will divide the proof of this statement into three parts. The first step is to show the implications $(1) \bm{\Rightarrow} (3)$ and $(2) \bm{\Rightarrow} (4)$. 

\begin{proposition}\label{1to3 and 2to4}
Let $F:\mathcal{A} \rightarrow \mathcal{B}$ be an exact monoidal functor between monoidal abelian categories with biexact tensor product. Then the derived functor $\textnormal{\Der}^{b}(F)$ of $F$ is a monoidal triangulated functor. In particular, if $F$ is an equivalence, then $\textnormal{\Der}^{b}(F)$ is also an equivalence.

\end{proposition}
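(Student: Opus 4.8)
The plan is to build the monoidal structure on $\textbf{D}^{b}(F)$ in three stages — on bounded complexes, then on the homotopy category, then on the derived category — using repeatedly that $F$, being exact and additive, is compatible with the constructions of Example \ref{monoidal tri cat1}.

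First I would recall that applying $F$ degreewise to complexes gives a functor $\textbf{C}^{b}(F)\colon \textbf{C}^{b}(\mathcal{A})\to\textbf{C}^{b}(\mathcal{B})$; since $F$ is exact it preserves null-homotopies, mapping cones and quasi-isomorphisms, so it descends to a triangulated functor $\textbf{D}^{b}(F)\colon\textbf{D}^{b}(\mathcal{A})\to\textbf{D}^{b}(\mathcal{B})$ commuting with the localization functors. (If $F$ is an equivalence, this step already gives that $\textbf{D}^{b}(F)$ is an equivalence: its quasi-inverse $G$ is again exact, $\textbf{D}^{b}(-)$ is strictly functorial on degreewise-applied exact functors, so a natural isomorphism $GF\cong\id_{\mathcal{A}}$ yields $\textbf{D}^{b}(G)\,\textbf{D}^{b}(F)=\textbf{D}^{b}(GF)\cong\id$, and symmetrically.)

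Next, to make $\textbf{C}^{b}(F)$ monoidal, let $J_{-,-}$ be the structure isomorphism of $F$. For $X^{\bullet},Y^{\bullet}\in\textbf{C}^{b}(\mathcal{A})$ I would define
$$\textbf{C}^{b}(F)(X^{\bullet})\,\widetilde{\otimes}\,\textbf{C}^{b}(F)(Y^{\bullet})\longrightarrow \textbf{C}^{b}(F)\bigl(X^{\bullet}\,\widetilde{\otimes}\,Y^{\bullet}\bigr)$$
in degree $n$ as $\coprod_{p+q=n}J_{X^{p},Y^{q}}$, using that the additive functor $F$ preserves the finite coproducts $\coprod_{p+q=n}$. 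The point to verify is that this is a chain isomorphism: naturality of $J$ in each variable, applied to $d_{X}^{p}$ and $d_{Y}^{q}$, matches exactly the two summands $d_{X}^{p}\otimes\id+(-1)^{p}\id\otimes d_{Y}^{q}$ of the total differential, the signs being untouched because $F$ is additive. Combined with $\textbf{C}^{b}(F)(\mathbbm{1}^{\bullet})\cong\mathbbm{1}^{\bullet}$ (from $F$ monoidal), the associativity and unit coherence diagrams for this data reduce degreewise to the corresponding ones for $F$; this is the most computational point, but it is routine sign bookkeeping. Since the resulting monoidal structure is defined degreewise, it is automatically compatible with the suspension-tensor isomorphisms $e_{-,-},\theta_{-,-}$ of Example \ref{monoidal tri cat1}.

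Finally I would descend to $\textbf{D}^{b}$: because $\textbf{C}^{b}(F)$ preserves null-homotopies and quasi-isomorphisms and $\widetilde{\otimes}$ descends to $\textbf{K}^{b}$ and $\textbf{D}^{b}$ (Example \ref{monoidal tri cat1}), and the displayed map is already invertible in $\textbf{C}^{b}(\mathcal{B})$, its image in $\textbf{D}^{b}(\mathcal{B})$ is an isomorphism, with naturality in $\textbf{D}^{b}$ with respect to roofs following from the universal property of the localization. Hence $\textbf{D}^{b}(F)$ is a monoidal functor preserving the unit, i.e.\ a monoidal triangulated functor, and an equivalence when $F$ is. The only genuine subtlety, and the step I would write out with care, is the sign and coherence bookkeeping making the degreewise $J_{X^{p},Y^{q}}$ assemble into a monoidal structure on $\textbf{C}^{b}(F)$; everything else is formal.
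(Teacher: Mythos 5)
Your proposal is correct and follows essentially the same route as the paper: apply $F$ degreewise, use the monoidal structure isomorphisms of $F$ together with preservation of finite coproducts to identify $\textnormal{\Der}^{b}(F)(X^{\bullet})\,\widetilde{\otimes}\,\textnormal{\Der}^{b}(F)(Y^{\bullet})$ with $\textnormal{\Der}^{b}(F)(X^{\bullet}\,\widetilde{\otimes}\,Y^{\bullet})$ naturally, note unit preservation, and obtain the equivalence statement by extending a quasi-inverse of $F$ in the same way. You are merely more explicit than the paper about the chain-level compatibility with the total differential and the coherence/sign bookkeeping, which the paper leaves implicit.
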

\begin{proof}
Since $F$ is exact, $F$ preserves quasi-isomorphisms. Then $F$ extends trivially to $\Der^{b}(F)$ which is a triangulated functor (see \cite[Chapter 10]{weibel1994homological}), namely, for any object $X^{\bullet}=(X^{n},d_{X}^{n})$ in $\Der^{b}(\mathcal{A})$,

$$ \Der^{b}(F)(X^{\bullet})=(F(X^{n}), F(d_{X}^{n})).$$ 

Let $X^{\bullet}$ and $Y^{\bullet}$ be objects in ${\Der}^{b}(\mathcal{A})$. For any $p, q \in \mathbbm{Z}$, as $F$ is a monoidal functor, therefore $F(X^{p}) \otimes F(Y^{q}) \cong F(X^{p} \otimes Y^{q})$.  Then for any $n\in \mathbbm{Z}$,  
\begin{equation*}
\begin{split}
{(\Der^{b}(F)(X^{\bullet})\, \widetilde{\otimes}\, \Der^{b}(F)(Y^{\bullet}))}^{n} &=\coprod_{p+q=n} {F(X^{p})}\otimes {F(Y^{q})} \\
& \cong \coprod_{p+q=n} F(X^{p}\otimes Y^{q}) \\
& \cong F(\coprod_{p+q=n} X^{p}\otimes Y^{q})\\
& = (\Der^{b}(F)(X^{\bullet}\,\widetilde{\otimes}\, Y^{\bullet}))^{n}.
\end{split}
\end{equation*}
Hence $(\Der^{b}(F)(X^{\bullet})\,\widetilde{\otimes}\,\Der^{b}(F)(Y^{\bullet})) \cong \Der^{b}(F)(X^{\bullet}\,\widetilde{\otimes}\, Y^{\bullet})$ which is natural in both $X^{\bullet}$ and $Y^{\bullet}$. In addition, $\textnormal{\Der}^{b}(F)$ preserves the unit. Thus $\textnormal{\Der}^{b}(F)$ is a monoidal triangulated functor. Moreover, if $F$ is an equivalence, then let $G$ denote a quasi-inverse of $F$ and define $\Der^{b}(G)$ in the same way. Then it is clear that $\Der^{b}(G)$ is a quasi-inverse of $\Der^{b}(F)$.
\end{proof}

Next, we give the proof of $(3)\bm{\Rightarrow} (1)$ and $(4) \bm{\Rightarrow} (2)$, starting with two lemmas.

\begin{lemma}\textnormal{(\cite[Example 4.5]{chen2022extensions})}\label{b.on.f.d}
Let $\mathcal{T}$ be a triangulated category and $\mathbbm{t}$ be a bounded t-structure on $\mathcal{T}$. Suppose the heart of $\mathbbm{t}$ is a finite abelian category. Then all bounded t-structures on $\mathcal{T}$ are equivalent to $\mathbbm{t}$. 
  
\end{lemma}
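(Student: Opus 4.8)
\emph{Proof proposal.} The plan is to show that any bounded t-structure $\mathbbm{t}_1$ on $\mathcal{T}$ is trapped between two translates of $\mathbbm{t}$, using that the heart $\mathcal{H}_{\mathbbm{t}}$ has only finitely many simple objects and that $\mathbbm{t}$ is bounded.

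First I would record a generation statement. Fix representatives $S_1,\dots,S_r$ of the isomorphism classes of simple objects of $\mathcal{H}_{\mathbbm{t}}$. Because $\mathcal{H}_{\mathbbm{t}}$ is a length category, every object of $\mathcal{H}_{\mathbbm{t}}$ is a finite iterated extension of the $S_i$; because $\mathbbm{t}$ is bounded, every $X\in\mathcal{T}$ is, via the truncation triangles $\tau_{\mathbbm{t}}^{\leqslant j-1}X\to\tau_{\mathbbm{t}}^{\leqslant j}X\to\Sigma^{-j}H^{j}_{\mathbbm{t}}(X)\to\Sigma\tau_{\mathbbm{t}}^{\leqslant j-1}X$ (only finitely many of which have non-zero third term, by boundedness and Lemma \ref{when it's 0}), a finite iterated extension of the stalks $\Sigma^{-j}H^{j}_{\mathbbm{t}}(X)$. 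Combining the two: since for $X\in\mathcal{D}^{\leqslant 0}$ one has $H^{j}_{\mathbbm{t}}(X)=0$ for $j\geqslant 1$, every object of $\mathcal{D}^{\leqslant 0}$ is a finite iterated extension of objects $\Sigma^{k}S_i$ with $k\geqslant 0$, and dually every object of $\mathcal{D}^{\geqslant 0}$ is a finite iterated extension of objects $\Sigma^{k}S_i$ with $k\leqslant 0$.

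Now let $\mathbbm{t}_1=(\mathcal{D}_1^{\leqslant 0},\mathcal{D}_1^{\geqslant 1})$ be any bounded t-structure on $\mathcal{T}$. Since $\mathbbm{t}_1$ is bounded, for each $i$ there are integers $a_i\leqslant b_i$ with $S_i\in\mathcal{D}_1^{\leqslant b_i}\cap\mathcal{D}_1^{\geqslant a_i}$; set $N:=\max_i b_i$ and $M:=\min_i a_i$, so that $M\leqslant N$ (here $r\geqslant 1$ since $\mathcal{T}\neq 0$). For $k\geqslant 0$ we have $S_i\in\mathcal{D}_1^{\leqslant b_i}\subseteq\mathcal{D}_1^{\leqslant N}$ and hence $\Sigma^{k}S_i\in\Sigma^{k}\mathcal{D}_1^{\leqslant N}\subseteq\mathcal{D}_1^{\leqslant N}$, because the aisle $\mathcal{D}_1^{\leqslant N}=\Sigma^{-N}\mathcal{D}_1^{\leqslant 0}$ is closed under the suspension and under extensions; the generation statement then yields $\mathcal{D}^{\leqslant 0}\subseteq\mathcal{D}_1^{\leqslant N}$, i.e. $\mathcal{D}^{\leqslant -N}\subseteq\mathcal{D}_1^{\leqslant 0}$. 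Dually, for $k\leqslant 0$ one has $\Sigma^{k}S_i\in\mathcal{D}_1^{\geqslant M}$ and the coaisle $\mathcal{D}_1^{\geqslant M}$ is closed under extensions, so $\mathcal{D}^{\geqslant 0}\subseteq\mathcal{D}_1^{\geqslant M}$, i.e. $\mathcal{D}^{\geqslant 1}\subseteq\mathcal{D}_1^{\geqslant M+1}$; passing to left orthogonal complements reverses this inclusion, giving $\mathcal{D}_1^{\leqslant M}\subseteq\mathcal{D}^{\leqslant 0}$, i.e. $\mathcal{D}_1^{\leqslant 0}\subseteq\mathcal{D}^{\leqslant -M}$. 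Putting these together, $\mathcal{D}^{\leqslant -N}\subseteq\mathcal{D}_1^{\leqslant 0}\subseteq\mathcal{D}^{\leqslant -M}$ with $-N\leqslant -M$, which is exactly the condition that $\mathbbm{t}_1$ be equivalent to $\mathbbm{t}$.

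The one step deserving care is the generation statement — checking that $\mathcal{D}^{\leqslant 0}$ is precisely the extension closure of the non-negative shifts of the $S_i$ — which rests on combining the Postnikov filtration coming from boundedness of $\mathbbm{t}$ with the composition series coming from the length property, together with the standard closure of aisles (resp. coaisles) under extensions and under the relevant suspensions. Finiteness of the set of simples is used precisely to make $N$ and $M$ finite, so that a single pair of translates works uniformly for all of $\mathcal{D}^{\leqslant 0}$; everything after that is bookkeeping with index shifts.
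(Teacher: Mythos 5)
Your proposal is correct. Note that the paper does not prove this lemma at all: it is imported by citation from \cite[Example 4.5]{chen2022extensions}, so there is no in-paper argument to compare against; your write-up supplies the standard proof that the cited source records. The two ingredients you use are exactly the right ones: (i) since $\mathbbm{t}$ is bounded and $\mathcal{H}_{\mathbbm{t}}$ is a length category with simples $S_1,\dots,S_r$, the Postnikov (truncation) triangles together with composition series show that $\mathcal{D}^{\leqslant 0}$ lies in the extension closure of $\{\Sigma^{k}S_i\,:\,k\geqslant 0\}$ and $\mathcal{D}^{\geqslant 0}$ in that of $\{\Sigma^{k}S_i\,:\,k\leqslant 0\}$; (ii) boundedness of $\mathbbm{t}_1$ places the finitely many $S_i$ in $\mathcal{D}_1^{\leqslant N}\cap\mathcal{D}_1^{\geqslant M}$ uniformly, and since aisles (resp.\ coaisles) are closed under extensions, positive (resp.\ negative) shifts, this gives $\mathcal{D}^{\leqslant 0}\subseteq\mathcal{D}_1^{\leqslant N}$ and $\mathcal{D}^{\geqslant 0}\subseteq\mathcal{D}_1^{\geqslant M}$, the latter converting to an aisle inclusion via $\mathcal{D}^{\leqslant 0}={}^{\perp}(\mathcal{D}^{\geqslant 1})$. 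The bookkeeping with shifts matches the paper's definition of equivalence of t-structures, and the only degenerate case ($\mathcal{T}=0$, no simples) is trivial, as you indicate.
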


\begin{lemma}\label{bounded in big}

Let $\mathcal{A}$ be a Grothendieck category and $\mathbbm{t}$ be a t-structure on $\textnormal{\Der}^{b}(\mathcal{A})$. Then the following statements are equivalent:
\begin{enumerate}
    \item $\mathbbm{t}$ is bounded; 
    \item $\mathbbm{t}$ is equivalent to the standard t-structure $\mathbbm{t}_{\mathcal{A}}$.
\end{enumerate}
\end{lemma}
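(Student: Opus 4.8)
The plan is to handle $(2)\Rightarrow(1)$ by a short formal argument and to put the work into $(1)\Rightarrow(2)$. For $(2)\Rightarrow(1)$ I would argue as follows. The standard t-structure $\mathbbm{t}_{\mathcal{A}}$ on $\Der^{b}(\mathcal{A})$ is bounded, since a bounded complex has nonzero cohomology in only finitely many degrees. If $\mathbbm{t}$ is equivalent to $\mathbbm{t}_{\mathcal{A}}$, fix $m\leqslant n$ with $\mathcal{D}^{\leqslant m}_{\mathbbm{t}}\subseteq \mathcal{D}^{\leqslant 0}_{\mathcal{A}}\subseteq \mathcal{D}^{\leqslant n}_{\mathbbm{t}}$. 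For any $X$, boundedness of $\mathbbm{t}_{\mathcal{A}}$ puts $X$ in some $\mathcal{D}^{\leqslant j}_{\mathcal{A}}$, so $\Sigma^{j}X\in\mathcal{D}^{\leqslant 0}_{\mathcal{A}}\subseteq\mathcal{D}^{\leqslant n}_{\mathbbm{t}}$ and $X\in\mathcal{D}^{\leqslant n+j}_{\mathbbm{t}}\subseteq\mathbbm{t}^{-}$; passing to left perpendiculars in the left-hand inclusion and arguing symmetrically with the coaisles puts $X\in\mathbbm{t}^{+}$. Hence $\mathbbm{t}^{b}=\Der^{b}(\mathcal{A})$.

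For $(1)\Rightarrow(2)$ the first step, and the crucial one, is to establish a uniform bound: there is an integer $N\geqslant 0$ with $\mathcal{A}\subseteq \mathcal{D}^{\leqslant N}_{\mathbbm{t}}\cap\mathcal{D}^{\geqslant -N}_{\mathbbm{t}}$, where $\mathcal{A}$ is identified with the full subcategory of $\Der^{b}(\mathcal{A})$ of stalk complexes in degree $0$. I would obtain this by contradiction. If no uniform upper bound exists, then for each $k\geqslant 0$ there is $X_{k}\in\mathcal{A}$ with $X_{k}\notin\mathcal{D}^{\leqslant k}_{\mathbbm{t}}$, so by Lemma \ref{when it's 0} one may choose $i_{k}\geqslant k+1$ with $H^{i_{k}}_{\mathbbm{t}}(X_{k})\neq 0$. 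Since $\mathcal{A}$, being a Grothendieck category, is cocomplete, $X:=\coprod_{k\geqslant 0}X_{k}$ is an object of $\mathcal{A}$, hence of $\Der^{b}(\mathcal{A})$; each $X_{k}$ is a retract of $X$ in $\Der^{b}(\mathcal{A})$, and the cohomological functors $H^{i}_{\mathbbm{t}}$ are additive, so $H^{i_{k}}_{\mathbbm{t}}(X_{k})\neq 0$ is a direct summand of $H^{i_{k}}_{\mathbbm{t}}(X)$ and thus $H^{i}_{\mathbbm{t}}(X)\neq 0$ for arbitrarily large $i$. This is impossible: $\mathbbm{t}$ bounded puts $X$ in some $\mathcal{D}^{\leqslant M}_{\mathbbm{t}}$, whence $H^{i}_{\mathbbm{t}}(X)=0$ for $i\geqslant M+1$ by Lemma \ref{when it's 0}. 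The symmetric argument using $\mathcal{D}^{\geqslant -k}_{\mathbbm{t}}$ and the negative cohomologies yields a uniform lower bound, and enlarging $N$ gives the claimed inclusion. I expect this coproduct device to be the only genuine obstacle: it is exactly where cocompleteness of $\mathcal{A}$ is used, and the argument would have no substitute without it, since the ambient derived category carries no finiteness hypothesis.

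It then remains to pass from the uniform bound to the equivalence, which is formal. Every $Z\in\Der^{b}(\mathcal{A})$ is built by finitely many triangles from the shifted stalks $\Sigma^{-j}H^{j}_{\mathcal{A}}(Z)$, $j$ ranging over the finite set of degrees with $H^{j}_{\mathcal{A}}(Z)\neq 0$, and the aisle and coaisle of any t-structure are closed under extensions. If $Z\in\mathcal{D}^{\leqslant 0}_{\mathcal{A}}$ then only $j\leqslant 0$ occur and, using $\mathcal{A}\subseteq\mathcal{D}^{\leqslant N}_{\mathbbm{t}}$, each such stalk lies in $\Sigma^{-j}\mathcal{D}^{\leqslant N}_{\mathbbm{t}}=\mathcal{D}^{\leqslant N+j}_{\mathbbm{t}}\subseteq\mathcal{D}^{\leqslant N}_{\mathbbm{t}}$, so $\mathcal{D}^{\leqslant 0}_{\mathcal{A}}\subseteq\mathcal{D}^{\leqslant N}_{\mathbbm{t}}$; dually, if $Z\in\mathcal{D}^{\geqslant N+1}_{\mathcal{A}}$ then only $j\geqslant N+1$ occur and, using $\mathcal{A}\subseteq\mathcal{D}^{\geqslant -N}_{\mathbbm{t}}$, each such stalk lies in $\mathcal{D}^{\geqslant -N+j}_{\mathbbm{t}}\subseteq\mathcal{D}^{\geqslant 1}_{\mathbbm{t}}$, so $\mathcal{D}^{\geqslant N+1}_{\mathcal{A}}\subseteq\mathcal{D}^{\geqslant 1}_{\mathbbm{t}}$. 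Taking left perpendiculars in the latter inclusion (recall $\mathcal{D}^{\leqslant 0}={}^{\perp}\mathcal{D}^{\geqslant 1}$ for any t-structure) gives $\mathcal{D}^{\leqslant 0}_{\mathbbm{t}}\subseteq\mathcal{D}^{\leqslant N}_{\mathcal{A}}$, equivalently $\mathcal{D}^{\leqslant -N}_{\mathbbm{t}}\subseteq\mathcal{D}^{\leqslant 0}_{\mathcal{A}}$ after applying $\Sigma^{N}$. Combining, $\mathcal{D}^{\leqslant -N}_{\mathbbm{t}}\subseteq\mathcal{D}^{\leqslant 0}_{\mathcal{A}}\subseteq\mathcal{D}^{\leqslant N}_{\mathbbm{t}}$, which is precisely the statement that $\mathbbm{t}$ and $\mathbbm{t}_{\mathcal{A}}$ are equivalent, completing $(1)\Rightarrow(2)$.
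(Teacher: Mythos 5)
Your argument is correct, but it takes a genuinely different route from the paper. For $(1)\Rightarrow(2)$ the paper passes to the unbounded derived category $\Der(\mathcal{A})$ and exploits the Grothendieck structure directly: it takes a generator $A$ and its injective envelope $E$ (an injective cogenerator) and cites two results of Psaroudakis--Vit\'oria, namely the characterisation of the standard aisle by vanishing of $\Hom(-,E[i])$ for $i\leqslant -1$, and the description of $\Tilde{\mathcal{D}}_{\mathcal{A}}^{\leqslant 0}$ as the smallest coproduct-closed suspended subcategory of $\Der(\mathcal{A})$ containing $A$; boundedness of $\mathbbm{t}$ places $E$ in some $\mathcal{D}^{\geqslant m+1}$ and $A$ in some $\mathcal{D}^{\leqslant n}$, which yields the sandwich $\mathcal{D}^{\leqslant m}\subseteq\mathcal{D}_{\mathcal{A}}^{\leqslant 0}\subseteq\mathcal{D}^{\leqslant n}$. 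You instead stay entirely inside $\Der^{b}(\mathcal{A})$: the infinite-coproduct/retract trick gives a uniform bound $\mathcal{A}\subseteq\mathcal{D}^{\leqslant N}_{\mathbbm{t}}\cap\mathcal{D}^{\geqslant -N}_{\mathbbm{t}}$ on stalk complexes (this step is sound -- each $X_k$ does split off the coproduct, and Lemma \ref{when it's 0} applies since $\mathbbm{t}$ is assumed bounded), and then d\'evissage along truncation triangles plus the identity $\mathcal{D}^{\leqslant 0}={}^{\perp}\mathcal{D}^{\geqslant 1}$ transfers the bound to the whole aisle. What your route buys is independence from the generator/injective-cogenerator machinery and from the unbounded category: only the existence of set-indexed coproducts in $\mathcal{A}$ is used, so your proof works under a weaker hypothesis than Grothendieck; what the paper's route buys is brevity, since the two cited results do the sandwiching in one stroke. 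One small slip: in your $(2)\Rightarrow(1)$ you should take \emph{right} perpendiculars of the inclusion $\mathcal{D}^{\leqslant m}_{\mathbbm{t}}\subseteq\mathcal{D}^{\leqslant 0}_{\mathcal{A}}$ (giving $\mathcal{D}^{\geqslant 1}_{\mathcal{A}}\subseteq\mathcal{D}^{\geqslant m+1}_{\mathbbm{t}}$), not left ones; this direction is in any case immediate, as the paper notes, from the boundedness of $\mathbbm{t}_{\mathcal{A}}$ together with \cite[Lemma 4.1]{chen2022extensions}.
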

\begin{proof}
Let $\Tilde{\mathbbm{t}}_{\mathcal{A}}:=(\Tilde{\mathcal{D}}_{\mathcal{A}}^{\leqslant 0},\Tilde{\mathcal{D}}_{\mathcal{A}}^{\geqslant 1})$ and $\mathbbm{t}_{\mathcal{A}}:=(\mathcal{D}_{\mathcal{A}}^{\leqslant 0},\mathcal{D}_{\mathcal{A}}^{\geqslant 1})$ be the standard t-structures on $\Der(\mathcal{A})$ and $\Der^{b}(\mathcal{A})$ respectively. It is clear that $(2)\Rightarrow (1)$, since the standard t-structure $\mathbbm{t}_{\mathcal{A}}$ is bounded. Next, we prove the other implication. 

\par Let $A$ be a generator of $\mathcal{A}$ and $E$ be the injective envelope of $A$. Then $E$ is an injective cogenerator of $\mathcal{A}$. The assumption that $\mathbbm{t}$ is bounded implies that the injective cogenerator $E$ lies in $\mathcal{D}^{\geqslant m+1}$ for some $m\in \mathbbm{Z}$. Due to \cite[Remark 4.11]{psaroudakis2018realisation}, for any object $X\in \Der(\mathcal{A})$, $X\in \Tilde{\mathcal{D}}_{\mathcal{A}}^{\leqslant 0}$ if and only if $\Hom_{\Der(\mathcal{A})}(X,E[i])=0$ for all $i\leqslant -1$. Let $X\in \mathcal{D}^{\leqslant m}$. Then $\Hom_{\Der(\mathcal{A})}(X,\mathcal{D}^{\geqslant m+1})=0$. Since $\mathcal{D}^{\geqslant m+1}$ is closed under negative shifts, $\{E[i]\}_{i\leqslant -1}$ are objects in $\mathcal{D}^{\geqslant m+1}$. Hence $X\in \Tilde{\mathcal{D}}_{\mathcal{A}}^{\leqslant 0}\cap \Der^{b}(\mathcal{A})=\mathcal{D}_{\mathcal{A}}^{\leqslant 0}$, which yields that $\mathcal{D}^{\leqslant m} \subseteq \mathcal{D}_{\mathcal{A}}^{\leqslant 0}$.

\par The boundedness of $\mathbbm{t}$ tells us that $A \in \mathcal{D}^{\leqslant n} $ for some $n \in \mathbbm{Z}$. We fix the following notations in $\Der(\mathcal{A})$:
$$\mathcal{U}:={({\mathcal{D}}^{\leqslant n})}^{\perp}:=\{U\in \Der(\mathcal{A})\;|\; \Hom_{\Der(\mathcal{A})}(Y,U)=0,\;\forall Y \in \mathcal{D}^{\leqslant n}\},$$
$${^{\perp}(\mathcal{U})}:=\{V\in \Der(\mathcal{A})\;|\; \Hom_{\Der(\mathcal{A})}(V,U)=0,\;\forall U \in \mathcal{U}\}.$$
In fact, ${^{\perp}(\mathcal{U})}$ is a suspended subcategory (i.e. it is closed under extensions and positive iterations of the shift functor) and closed under coproducts in $\Der(\mathcal{A})$. The definition of ${^{\perp}(\mathcal{U})}$ implies that $A\in \mathcal{D}^{\leqslant n}\subseteq {^{\perp}(\mathcal{U})}$. \cite[Lemma 4.10]{psaroudakis2018realisation} shows that $\Tilde{\mathcal{D}}_{\mathcal{A}}^{\leqslant 0}$ is the smallest suspended subcategory closed under coproducts in $\Der(\mathcal{A})$, which is containing $A$. Thus $\Tilde{\mathcal{D}}_{\mathcal{A}}^{\leqslant 0}\subseteq {^{\perp}(\mathcal{U})}$. By taking the restriction,
$$\mathcal{D}_{\mathcal{A}}^{\leqslant 0}=\Tilde{\mathcal{D}}_{\mathcal{A}}^{\leqslant 0}\cap \Der^{b}(\mathcal{A})\subseteq {^{\perp}(\mathcal{U})}\cap \Der^{b}(\mathcal{A}).$$
We claim that ${^{\perp}(\mathcal{U})}\cap \Der^{b}(\mathcal{A})=\mathcal{D}^{\leqslant n}$, then we finish the proof. Indeed, ${^{\perp}(\mathcal{U})}\cap \Der^{b}(\mathcal{A}) \supseteq \mathcal{D}^{\leqslant n}$ is clear, we only show another implication. Let $X \in {^{\perp}(\mathcal{U})}\cap \Der^{b}(\mathcal{A})$. Note that $\mathcal{D}^{\geqslant n+1}\subseteq \mathcal{U}$, hence $\Hom_{\Der(\mathcal{A})}(X,\mathcal{D}^{\geqslant n+1})=0$ and then $X\in \mathcal{D}^{\leqslant n}$.
\end{proof}

\begin{proposition}\label{3to1 and 4to2}
Let $\mathcal{A}$ and $\mathcal{B}$ be locally finite tensor Grothendieck categories. Then the following statements hold.
\begin{enumerate}
\item If $\textnormal{\Der}^{b}(\textnormal{\fp}(\mathcal{A}))$ and $\textnormal{\Der}^{b}(\textnormal{\fp}(\mathcal{B}))$ are monoidal triangulated equivalent, then $\textnormal{\fp}(\mathcal{A})$ and $\textnormal{\fp}(\mathcal{B})$ are monoidal abelian equivalent.
\item If $\textnormal{\Der}^{b}(\mathcal{A})$ and $\textnormal{\Der}^{b}(\mathcal{B})$ are monoidal triangulated equivalent, then $\mathcal{A}$ and $\mathcal{B}$ are monoidal abelian equivalent.
\end{enumerate}
\end{proposition}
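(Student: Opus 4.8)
The plan is to run, for each of the two statements, the machinery assembled in Sections \ref{t-stru} and \ref{L.N.T.R.G}: transport the standard monoidal t-structure across the given monoidal triangulated equivalence, recognise the transported t-structure as being equivalent to the standard monoidal t-structure on the target, and then invoke Corollary \ref{unique u.t.e cor}(2) to conclude that the equivalence restricts to a monoidal abelian equivalence of hearts.

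For (1), let $F\colon\Der^{b}(\fp(\mathcal{A}))\to\Der^{b}(\fp(\mathcal{B}))$ be a monoidal triangulated equivalence. By Example \ref{key example} the standard t-structure $\mathbbm{t}_{1}:=\mathbbm{t}_{\fp(\mathcal{A})}$ is a monoidal t-structure on $\Der^{b}(\fp(\mathcal{A}))$ with $0\in\text{dev}(\mathbbm{t}_{1})$, whose heart is monoidal abelian equivalent to $\fp(\mathcal{A})$; similarly $\mathbbm{t}_{2}:=\mathbbm{t}_{\fp(\mathcal{B})}$ is a monoidal t-structure on $\Der^{b}(\fp(\mathcal{B}))$ with heart $\fp(\mathcal{B})$, and since $\fp(\mathcal{B})$ is a finite tensor category it is tensor reduced by Proposition \ref{trivial ideal}, so $\mathbbm{t}_{2}$ is a tensor reduced monoidal t-structure. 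By Corollary \ref{unique u.t.e cor}(1) the $F$-image $\mathbbm{t}:=(\mathcal{U},\mathcal{V})$ of $\mathbbm{t}_{1}$ is again a monoidal t-structure on $\Der^{b}(\fp(\mathcal{B}))$; because $F$ is an equivalence commuting with $\Sigma$ and compatible with the defining triangles of the aisle and coaisle, its heart $\mathcal{H}_{\mathbbm{t}}$ is, via $F$, equivalent to $\mathcal{H}_{\mathbbm{t}_{1}}\simeq\fp(\mathcal{A})$, in particular a finite abelian category. Lemma \ref{b.on.f.d} then shows that every bounded t-structure on $\Der^{b}(\fp(\mathcal{B}))$ is equivalent to $\mathbbm{t}$; in particular $\mathbbm{t}_{2}$, being bounded, is equivalent to $\mathbbm{t}$. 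All hypotheses of Corollary \ref{unique u.t.e cor}(2) (namely $0\in\text{dev}(\mathbbm{t}_{1})$, $\mathbbm{t}_{2}$ tensor reduced, and $\mathbbm{t}$ equivalent to $\mathbbm{t}_{2}$) are now met, so $F|_{\mathcal{H}_{\mathbbm{t}_{1}}}\colon\mathcal{H}_{\mathbbm{t}_{1}}\to\mathcal{H}_{\mathbbm{t}_{2}}$ is a monoidal abelian equivalence. Composing with the monoidal equivalences $\mathcal{H}_{\mathbbm{t}_{1}}\simeq\fp(\mathcal{A})$ and $\mathcal{H}_{\mathbbm{t}_{2}}\simeq\fp(\mathcal{B})$ shows that $\fp(\mathcal{A})$ and $\fp(\mathcal{B})$ are monoidal abelian equivalent.

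For (2), I would argue in exactly the same way, now with $\mathbbm{t}_{1}:=\mathbbm{t}_{\mathcal{A}}$ and $\mathbbm{t}_{2}:=\mathbbm{t}_{\mathcal{B}}$ the standard monoidal t-structures on $\Der^{b}(\mathcal{A})$ and $\Der^{b}(\mathcal{B})$; here $\mathbbm{t}_{2}$ is tensor reduced because $\mathcal{B}$ is tensor reduced by Theorem \ref{0tensor lemma}. The single point that must be handled differently is the comparison of the transported t-structure $\mathbbm{t}$ with $\mathbbm{t}_{2}$: now $\mathcal{H}_{\mathbbm{t}}\simeq\mathcal{A}$ need not be a finite abelian category, so Lemma \ref{b.on.f.d} does not apply. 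Instead, since monoidal t-structures are bounded by definition, $\mathbbm{t}$ is a bounded t-structure on $\Der^{b}(\mathcal{B})$, and Lemma \ref{bounded in big} gives directly that $\mathbbm{t}$ is equivalent to the standard t-structure $\mathbbm{t}_{\mathcal{B}}=\mathbbm{t}_{2}$. Corollary \ref{unique u.t.e cor}(2) then yields a monoidal abelian equivalence $\mathcal{H}_{\mathbbm{t}_{1}}\simeq\mathcal{H}_{\mathbbm{t}_{2}}$, hence $\mathcal{A}$ and $\mathcal{B}$ are monoidal abelian equivalent.

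The argument is essentially a bookkeeping assembly of the earlier results, and I do not expect a serious obstacle. The one place that genuinely uses the extra work done beforehand — and the reason Lemma \ref{bounded in big} was proved separately, via an injective cogenerator — is precisely the large case (2), where finiteness of the heart is unavailable to compare t-structures and one must know that mere boundedness of $\mathbbm{t}$ already forces $\mathbbm{t}$ to be equivalent to the standard t-structure of $\mathcal{B}$. The one routine verification left implicit above is that a monoidal triangulated equivalence sends the standard monoidal t-structure to a monoidal t-structure whose heart is exactly the $F$-image of the source heart; this follows formally from $F$ being an equivalence compatible with the shift functor and with distinguished triangles.
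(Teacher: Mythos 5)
Your proposal is correct and follows essentially the same route as the paper: transport the standard (tensor reduced, by Proposition \ref{trivial ideal} and Theorem \ref{0tensor lemma}) monoidal t-structure along the equivalence via Corollary \ref{unique u.t.e cor}(1), compare with the standard t-structure on the target using Lemma \ref{b.on.f.d} in the finitely presented case and Lemma \ref{bounded in big} in the large case, and conclude with Corollary \ref{unique u.t.e cor}(2). The only cosmetic difference is that in case (1) you apply Lemma \ref{b.on.f.d} to the transported t-structure (whose heart is finite) rather than to $\mathbbm{t}_{\fp(\mathcal{B})}$, which is equally valid.
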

\begin{proof}
Let $\mathbbm{t}_{\fp(\mathcal{A})}$ and $\mathbbm{t}_{\mathcal{A}}$ (resp. $\mathbbm{t}_{\fp(\mathcal{B})}$ and $\mathbbm{t}_{\mathcal{B}}$) be the standard t-structures on $\textnormal{\Der}^{b}(\textnormal{\fp}(\mathcal{A}))$ and $\textnormal{\Der}^{b}(\mathcal{A})$ (resp. $\textnormal{\Der}^{b}(\textnormal{\fp}(\mathcal{B}))$ and $\textnormal{\Der}^{b}(\mathcal{B})$). By Proposition \ref{trivial ideal} and Theorem \ref{0tensor lemma}, $\mathbbm{t}_{\fp(\mathcal{A})}$ and $\mathbbm{t}_{\mathcal{A}}$ (resp. $\mathbbm{t}_{\fp(\mathcal{B})}$ and $\mathbbm{t}_{\mathcal{B}}$) are tensor reduced, which yields that $\textnormal{\Der}^{b}(\textnormal{\fp}(\mathcal{A}))$ and $\textnormal{\Der}^{b}(\mathcal{A})$ (resp. $\textnormal{\Der}^{b}(\textnormal{\fp}(\mathcal{B}))$ and $\textnormal{\Der}^{b}(\mathcal{B})$) are tensor reduced. Suppose that the functors
$$F: \textnormal{\Der}^{b}(\textnormal{\fp}(\mathcal{A}))\rightarrow \textnormal{\Der}^{b}(\textnormal{\fp}(\mathcal{B})),\;\Tilde{F}:\textnormal{\Der}^{b}(\mathcal{A}) \rightarrow \textnormal{\Der}^{b}(\mathcal{B})$$ 
are monoidal triangulated equivalent. By Corollary \ref{unique u.t.e cor} (1), there are monoidal t-structures $\mathbbm{t}:=(\mathcal{U},\mathcal{V})$ and $\Tilde{\mathbbm{t}}:=(\Tilde{\mathcal{U}},\Tilde{\mathcal{V}})$ on ${\Der}^{b}(\fp(\mathcal{B}))$ and $\textnormal{\Der}^{b}(\mathcal{B})$ respectively, where
\begin{equation*}
\begin{split}
&\mathcal{U}=\{X^{\bullet}\in {\Der}^{b}(\fp(\mathcal{B}))\;|\; \exists V^{\bullet} \in  \mathcal{D}_{\fp(\mathcal{A})}^{\leqslant 0} \text{ such that } F(V^{\bullet}) \cong X^{\bullet}\},\\
&\mathcal{V}=\{Y^{\bullet}\in {\Der}^{b}(\fp(\mathcal{B}))\;|\; \exists W^{\bullet} \in  \mathcal{D}_{\fp(\mathcal{A})}^{\geqslant 1} \text{ such that } F(W^{\bullet}) \cong Y^{\bullet}\},\\ 
&\Tilde{\mathcal{U}}=\{X^{\bullet}\in \textnormal{\Der}^{b}(\mathcal{B})\;|\; \exists V^{\bullet} \in  \mathcal{D}_{\mathcal{A}}^{\leqslant 0} \text{ such that } \Tilde{F}(V^{\bullet}) \cong X^{\bullet}\}, \\
&\Tilde{\mathcal{V}}=\{Y^{\bullet}\in \textnormal{\Der}^{b}(\mathcal{B})\;|\; \exists W^{\bullet} \in  \mathcal{D}_{\mathcal{A}}^{\geqslant 1} \text{ such that } \Tilde{F}(W^{\bullet}) \cong Y^{\bullet}\}.
\end{split}
\end{equation*}
According to Lemma \ref{b.on.f.d} and Lemma \ref{bounded in big}, it follows that $\mathbbm{t}$ and $\Tilde{\mathbbm{t}}$ are equivalent to $\mathbbm{t}_{\fp(\mathcal{B})}$ and $\mathbbm{t}_{\mathcal{B}}$ respectively. The proposition follows then from Corollary \ref{unique u.t.e cor} (2).
\end{proof}

\begin{proof}[Proof of Theorem \ref{4 eq}]
Following Proposition \ref{Morita tensor eq}, we obtain that $(1)\Leftrightarrow (2)$. Proposition \ref{1to3 and 2to4} tells us that $(1)\Rightarrow (3)$ and $(2) \Rightarrow (4)$. Proposition \ref{3to1 and 4to2} shows that $(3)\Rightarrow (1)$ and $(4) \Rightarrow (2)$.
\end{proof}
\begin{remark} (Corollary \ref{Corollary A})\label{corollary A}
Owing to Example \ref{hopf morita} and Remark \ref{gauge}, a special case of Theorem \ref{4 eq} is the following version for Hopf algebras.
Let $H$ and $H'$ be finite-dimensional Hopf algebras. Then the following are equivalent:
\begin{enumerate}
\item $H$ and $H'$ are gauge equivalent;
\item $\mods\-H$ and $\mods\-H'$ are monoidal abelian equivalent;
\item $\Mod\-H$ and $\Mod\-H'$ are monoidal abelian equivalent;
\item $\textnormal{\Der}^{b}(\mods\-H)$ and $\textnormal{\Der}^{b}(\mods\-H')$ are monoidal triangulated equivalent;
\item $\textnormal{\Der}^{b}(\Mod\-H)$ and $\textnormal{\Der}^{b}(\Mod\-H')$ are monoidal triangulated equivalent.

\end{enumerate}

More generally, for a finite-dimensional weak quasi-Hopf algebra $H$ (see \cite[Section 2.5]{haring1997reconstruction}), $\Mod\-H$ is a locally finite tensor Grothendieck category as $\textnormal{\fp}(\Mod\-H)=\mods\-H$ is a finite tensor category \cite{haring1997reconstruction}. Thus Theorem \ref{4 eq} also holds for finite-dimensional weak quasi-Hopf algebras.
\end{remark}

\begin{remark}\label{locally finite ring Grothendieck categories}

Let $Q$ be a finite acyclic quiver. According to Example \ref{rep exacmple}, $\text{rep}_{\mathbbm{k}}Q$ is a tensor reduced monoidal abelian category with biexact tensor product. And Example \ref{key example} tells us that the standard t-structure on $\Der^{b}(\text{rep}_{\mathbbm{k}}Q)$ is a monoidal t-structure. Then Corollary \ref{unique u.t.e cor} implies the following statement.   
\end{remark}

\begin{corollary}\textnormal{(\cite[Corollary 0.6]{zhang2022frobenius})}\label{zhang}
Let $Q$ and $Q'$ be two finite acyclic quivers such that $\emph{\Der}^{b}(\emph{rep}_{\mathbbm{k}}Q)$ and $\emph{\Der}^{b}(\emph{rep}_{\mathbbm{k}}Q')$ are monoidal triangulated equivalent, then $Q$ and $Q'$ are isomorphic as quivers. 
\end{corollary}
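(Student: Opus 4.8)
The plan is to run the given monoidal triangulated equivalence through the t-structure machinery of Section \ref{t-stru} in order to extract a monoidal abelian equivalence $\text{rep}_{\mathbbm{k}}Q\simeq\text{rep}_{\mathbbm{k}}Q'$, and then to reconstruct the quiver from this category by classical Morita theory. Note that $\text{rep}_{\mathbbm{k}}Q$ is \emph{not} rigid (its unit is not simple), so Theorem \ref{4 eq} does not apply and one must argue directly via Corollary \ref{unique u.t.e cor}.

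First I would assemble the inputs. By Example \ref{rep exacmple}, $(\text{rep}_{\mathbbm{k}}Q,\otimes,(\mathbbm{k},\text{id}))$ is a tensor reduced monoidal abelian category with biexact tensor product. By Example \ref{key example} the standard t-structure $\mathbbm{t}_{Q}$ on $\Der^{b}(\text{rep}_{\mathbbm{k}}Q)$ is a monoidal t-structure with $0\in\text{dev}(\mathbbm{t}_{Q})$ whose heart is equivalent to $\text{rep}_{\mathbbm{k}}Q$, and by Proposition \ref{tensor reduced prop} it is tensor reduced; the same applies to $Q'$ and $\mathbbm{t}_{Q'}$. Finally, $\text{rep}_{\mathbbm{k}}Q'$ is a finite abelian category: every representation is finite-dimensional and hence of finite length, and its simple objects are exactly the vertex simples $S_{a}$ for $a\in Q'_{0}$, so there are finitely many of them.

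Now let $F\colon\Der^{b}(\text{rep}_{\mathbbm{k}}Q)\to\Der^{b}(\text{rep}_{\mathbbm{k}}Q')$ be a monoidal triangulated equivalence. By Corollary \ref{unique u.t.e cor}(1) the $F$-image $\mathbbm{t}=(\mathcal{U},\mathcal{V})$ of $\mathbbm{t}_{Q}$ is a (bounded) monoidal t-structure on $\Der^{b}(\text{rep}_{\mathbbm{k}}Q')$. Since the heart of the standard t-structure on $\Der^{b}(\text{rep}_{\mathbbm{k}}Q')$ is the finite abelian category $\text{rep}_{\mathbbm{k}}Q'$, Lemma \ref{b.on.f.d} forces every bounded t-structure there, and in particular $\mathbbm{t}$, to be equivalent to $\mathbbm{t}_{Q'}$. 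Because $0\in\text{dev}(\mathbbm{t}_{Q})$ and $\mathbbm{t}_{Q'}$ is tensor reduced, Corollary \ref{unique u.t.e cor}(2) then shows that $F$ restricts to a monoidal abelian equivalence $\mathcal{H}_{\mathbbm{t}_{Q}}\xrightarrow{\sim}\mathcal{H}_{\mathbbm{t}_{Q'}}$, i.e.\ a $\mathbbm{k}$-linear equivalence $\text{rep}_{\mathbbm{k}}Q\simeq\text{rep}_{\mathbbm{k}}Q'$.

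It remains to recover the quiver from this equivalence. As $Q$ is acyclic, $\text{rep}_{\mathbbm{k}}Q$ is the module category of the finite-dimensional, basic path algebra $\mathbbm{k}Q$ (and similarly for $Q'$), so the previous paragraph yields a Morita equivalence $\mathbbm{k}Q\sim\mathbbm{k}Q'$. Morita equivalent basic finite-dimensional algebras are isomorphic, and the Gabriel quiver of the hereditary algebra $\mathbbm{k}Q$ is $Q$ itself; hence $\mathbbm{k}Q\cong\mathbbm{k}Q'$ and $Q\cong Q'$. The one place the monoidal hypothesis is genuinely used is the descent from a monoidal \emph{derived} equivalence to an \emph{abelian} equivalence of hearts — absent a tensor structure, different orientations of a Dynkin quiver already give a counterexample — so the step to watch is the verification of the hypotheses of Corollary \ref{unique u.t.e cor}: tensor reducedness (Example \ref{rep exacmple}, Proposition \ref{tensor reduced prop}) and finiteness of the heart (which feeds Lemma \ref{b.on.f.d}); the concluding quiver reconstruction is routine.
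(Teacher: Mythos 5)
Your proposal is correct and follows essentially the same route as the paper, which deduces the statement from Example \ref{rep exacmple}, Example \ref{key example} and Corollary \ref{unique u.t.e cor} (with Lemma \ref{b.on.f.d} supplying the required equivalence of bounded t-structures, exactly as you spell out). Your concluding reconstruction of $Q$ from the abelian equivalence via basic path algebras and Gabriel quivers is the standard argument the paper leaves implicit, and it is fine.
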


We emphasize that in the proof of Proposition \ref{3to1 and 4to2} the condition of monoidal equivalences is essential for this process, as shown in the following example.

\begin{example}
Consider the following quivers of type $A_5$: 
$$Q: 1\longrightarrow 2 \longrightarrow  3\longrightarrow  4\longrightarrow 5,$$
$$Q':1 \longleftarrow 2 \longleftarrow 3 \longrightarrow 4 \longrightarrow 5.$$
$\Der^{b}(\text{rep}_{\mathbbm{k}}Q)$ and $\Der^{b}(\text{rep}_{\mathbbm{k}}Q')$ are derived equivalent according to \cite[Section 5.2.8]{angeleri2007handbook}, whereas $\text{rep}_{\mathbbm{k}}Q$ and $\text{rep}_{\mathbbm{k}}Q'$ are not equivalent as abelian categories. Moreover, Corollary \ref{zhang} tells us that $\Der^{b}(\text{rep}_{\mathbbm{k}}Q)$ and $\Der^{b}(\text{rep}_{\mathbbm{k}}Q')$ are not monoidal derived equivalent. 
\end{example}

\subsection{Monoidal stable equivalences induced by monoidal derived equivalences}
In retrospect, all finite tensor categories with enough projectives are Frobenius categories (\cite[Proposition 2.3]{etingof2004finite}). In addition, the tensor product of any object with a projective object in a tensor category is still projective (\cite[Corollary 2, p.441]{kazhdan1994tensor}). Therefore, the stable categories of finite tensor categories with enough projectives are monoidal triangulated categories (\cite[Section 1.2]{nakano2024spectrum}), which are called \textbf{monoidal stable categories} in this paper.

The purpose of this subsection is to prove an analogue for a monoidal triangulated category of Rickard's result describing a stable category as a Verdier quotient of a derived category. We use $\mathcal{P}$ to denote the full subcategory of an abelian category, which consists of projective objects.

\begin{theorem}\emph{(\cite[Theorem 2.1]{rickard1989derived})}\label{Ric89_1}
Let $\mathcal{A}$ be a finite Frobenius abelian category. The essential image of the natural embedding $$\textnormal{\textbf{K}}^b(\mathcal{P})\rightarrow \textnormal{\Der}^b(\mathcal{A})$$ is a thick subcategory. The Verdier quotient category $\textnormal{\Der}^b(\mathcal{A})/\textnormal{\textbf{K}}^b(\mathcal{P})$ is equivalent as a triangulated category to the stable category $\underline{\mathcal{A}}$.
    
\end{theorem}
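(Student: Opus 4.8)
The plan is to prove the two assertions of the theorem in turn: first that $\textnormal{\textbf{K}}^b(\mathcal{P})$ sits in $\Der^{b}(\mathcal{A})$ as a thick subcategory, and then that the Verdier quotient is triangle equivalent to $\underline{\mathcal{A}}$. For the first assertion I would begin by recording that the canonical functor $\textnormal{\textbf{K}}^b(\mathcal{P})\to\Der^{b}(\mathcal{A})$ is fully faithful: a bounded complex of projectives is homotopically projective (it admits no nonzero maps, up to homotopy, into an acyclic complex, since each projective term lifts against epimorphisms), so homotopy classes of maps out of it agree with morphisms in $\Der^{b}(\mathcal{A})$. Its essential image is then automatically a triangulated subcategory, being the image of a fully faithful triangulated functor, so the only point is closure under direct summands. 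Here I would use that $\mathcal{A}$, being finite, is Krull--Schmidt, hence $\mathcal{P}$ is idempotent complete; by the theorem of Balmer and Schlichting the bounded homotopy category $\textnormal{\textbf{K}}^b(\mathcal{P})$ of an idempotent complete additive category is again idempotent complete. A summand of an object in the image then corresponds, through full faithfulness, to a splitting idempotent of $\textnormal{\textbf{K}}^b(\mathcal{P})$, which splits there and carries the summand back into the image; thus the image is thick.

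For the equivalence I would construct a comparison functor $G\colon\underline{\mathcal{A}}\to\Der^{b}(\mathcal{A})/\textnormal{\textbf{K}}^b(\mathcal{P})$ sending a module to its stalk complex in degree $0$. The composite $\mathcal{A}\to\Der^{b}(\mathcal{A})\to\Der^{b}(\mathcal{A})/\textnormal{\textbf{K}}^b(\mathcal{P})$ annihilates every morphism factoring through a projective (such a morphism factors through a projective stalk, which is zero in the quotient), so it factors through $\underline{\mathcal{A}}$ and yields $G$. To see that $G$ is triangulated I would match the two shifts: for $M\in\mathcal{A}$ pick a short exact sequence $0\to M\to I\to\Omega^{-1}M\to 0$ with $I$ projective--injective, available since $\mathcal{A}$ is Frobenius, and pass to its triangle in $\Der^{b}(\mathcal{A})$; killing $I$ in the quotient produces a natural isomorphism $\Sigma M\cong\Omega^{-1}M$, where $\Omega^{-1}$ is the suspension of $\underline{\mathcal{A}}$ in Happel's construction. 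As the triangles of $\underline{\mathcal{A}}$ are generated by images of short exact sequences, and these map to genuine triangles in the quotient, $G$ respects triangles.

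Rather than compute morphisms in the quotient by fractions, I would exhibit an explicit quasi-inverse stabilization functor and verify that both composites are the identity. Using that $\mathcal{A}$ has enough projectives, identify $\Der^{b}(\mathcal{A})$ with $\textnormal{\textbf{K}}^{-,b}(\mathcal{P})$, the bounded-above complexes of projectives with bounded cohomology. For such a $P^{\bullet}$ all but finitely many terms lie in the exact range, and I would set $S(P^{\bullet}):=\Sigma^{-n}\bigl[Z^{n}(P^{\bullet})\bigr]\in\underline{\mathcal{A}}$ for any $n\ll 0$, where $Z^{n}$ is the cocycle in degree $n$. The normalization by $\Sigma^{-n}$ is forced by the short exact sequence $0\to Z^{n-1}\to P^{n-1}\to Z^{n}\to 0$, which gives $Z^{n}\cong\Sigma Z^{n-1}$ stably and hence $\Sigma^{-n}Z^{n}\cong\Sigma^{-(n-1)}Z^{n-1}$, so $S$ is independent of $n$; independence of the chosen resolution and triangulatedness follow from the same syzygy mechanism, using crucially that $\Omega$ is invertible on $\underline{\mathcal{A}}$ (a consequence of the Frobenius hypothesis). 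Since a bounded complex of projectives has $Z^{n}=0$ for $n\ll 0$, $S$ sends $\textnormal{\textbf{K}}^b(\mathcal{P})$ to $0$ and so descends to the quotient.

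Finally I would check $S\circ G\cong\mathrm{id}$ and $G\circ S\cong\mathrm{id}$. For a module $M$ the cocycle of its projective resolution in degree $-k$ is the syzygy $\Omega^{k}M$, so $S(G(M))\cong\Sigma^{k}\Omega^{k}M\cong\Omega^{-k}\Omega^{k}M\cong M$, giving $S\circ G\cong\mathrm{id}_{\underline{\mathcal{A}}}$ directly. The remaining isomorphism $G\circ S\cong\mathrm{id}$ is the main obstacle: it amounts to showing that every object of $\Der^{b}(\mathcal{A})$ is isomorphic in the quotient to its syzygy-at-infinity, equivalently that $G$ is essentially surjective with the comparison map an isomorphism. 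I would prove this by induction on the cohomological width, using the truncation triangles $\tau^{\le n-1}X^{\bullet}\to X^{\bullet}\to H^{n}(X^{\bullet})[-n]\to$, the base case of single-cohomology complexes being shifts of modules and hence modules in the quotient by the shift-matching above, and the inductive step following because the objects isomorphic to modules form a triangulated subcategory closed under cones. Controlling these cones, i.e. seeing that cones of maps between modules are again represented by modules via the Frobenius syzygy functor, is where the finiteness and Frobenius hypotheses do the real work; granting it, $G$ is a triangle equivalence and the theorem follows.
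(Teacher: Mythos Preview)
The paper does not give its own proof of this theorem; it is cited from \cite[Theorem 2.1]{rickard1989derived} and used as a black box. The only place the paper touches its content is in the proof of Proposition~\ref{Verdier tensor equi to stable}, where the equivalence is recalled as the functor $F$ obtained by factoring the composite $\mathcal{A}\hookrightarrow\Der^{b}(\mathcal{A})\twoheadrightarrow\Der^{b}(\mathcal{A})/\textbf{K}^{b}(\mathcal{P})$ through $\underline{\mathcal{A}}$ --- precisely your functor $G$. So there is nothing in the paper to compare your argument against beyond confirming that your comparison functor is the one the paper later relies on.

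Your outline is the classical Rickard--Buchweitz argument and is essentially correct. One simplification for the last step: instead of inducting on cohomological width to get $G\circ S\cong\mathrm{id}$, you can finish with a single triangle. For $P^{\bullet}\in\textbf{K}^{-,b}(\mathcal{P})$ and $n$ below the cohomological range, the brutal truncation $\sigma^{\geqslant n}P^{\bullet}$ is a bounded complex of projectives, and the cone of the inclusion $Z^{n}[-n]\to\sigma^{\geqslant n}P^{\bullet}$ is quasi-isomorphic to $P^{\bullet}$; since $\sigma^{\geqslant n}P^{\bullet}$ vanishes in the quotient, this triangle gives the desired natural isomorphism immediately, without needing to control cones of maps between modules by hand. (There is also a harmless off-by-one in your syzygy bookkeeping for $S\circ G$: for a resolution with $H^{0}=M$ one has $Z^{0}=P^{0}$, so one must take $n<0$, and then $Z^{-k}$ is $\Omega^{k+1}M$ up to projectives rather than $\Omega^{k}M$; the normalization by $\Sigma^{-n}$ absorbs this.)
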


In view of \cite[Corollary 8.3]{rickard1989morita} and Theorem \ref{Ric89_1}, if two finite Frobenius abelian categories $\mathcal{A}$ and $\mathcal{A}'$ are derived equivalent, then they are stably equivalent (\cite[Corollary 2.2]{rickard1989derived}). 
According to \cite[Definition 1.2]{balmer2005spectrum}, a \textbf{thick tensor ideal} $\mathcal{I}$ of a monoidal triangulated category $\mathcal{T}$ is a thick subcategory such that 
$\mathcal{I}$ is a tensor ideal. 

\begin{lemma}\emph{(\cite[Remark 4.0.6]{pauwels2015quasi})}\label{quotient}
    Let $\mathcal{I}$ be a thick tensor ideal of a monoidal triangulated category $\mathcal{T}$. Then the Verdier quotient category $\mathcal{T}/\mathcal{I}$ is still a monoidal triangulated category.
\end{lemma}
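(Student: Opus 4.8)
\textbf{Proof proposal for Lemma \ref{quotient}.}

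The plan is to verify that the Verdier quotient $\mathcal{T}/\mathcal{I}$ inherits a monoidal structure from $\mathcal{T}$ in a way compatible with its triangulated structure. Recall that $\mathcal{T}/\mathcal{I}$ has the same objects as $\mathcal{T}$, and a morphism $X \to Y$ in $\mathcal{T}/\mathcal{I}$ is represented by a roof $X \xleftarrow{s} Z \xrightarrow{f} Y$ where the cone of $s$ lies in $\mathcal{I}$; such morphisms $s$ form a multiplicative system $\mathcal{S}$ compatible with the triangulation. First I would define the tensor product on objects simply by $X \otimes Y$ as computed in $\mathcal{T}$, and the unit remains $\mathbbm{1}$. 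The content is to extend $\otimes$ to morphisms: given a roof representing $g \colon X \to X'$ with denominator $s \colon Z \to X$ whose cone is in $\mathcal{I}$, I would show that $s \otimes \id_Y \colon Z \otimes Y \to X \otimes Y$ again has cone in $\mathcal{I}$. This is the crucial point: since $\otimes$ is exact in each variable, the cone of $s \otimes \id_Y$ is (isomorphic to) $\mathrm{cone}(s) \otimes Y$, which lies in $\mathcal{I}$ because $\mathcal{I}$ is a tensor ideal. Hence tensoring with any object sends $\mathcal{S}$ into $\mathcal{S}$, so by the universal property of localization the bifunctor $\otimes \colon \mathcal{T} \times \mathcal{T} \to \mathcal{T} \to \mathcal{T}/\mathcal{I}$ factors uniquely through $(\mathcal{T}/\mathcal{I}) \times (\mathcal{T}/\mathcal{I})$, yielding a well-defined bifunctor on the quotient.

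Next I would check that this induced bifunctor is still exact in each variable: the localization functor $Q \colon \mathcal{T} \to \mathcal{T}/\mathcal{I}$ is triangulated and essentially surjective, and for a fixed object $Y$ the composite $Q \circ (- \otimes Y)$ is triangulated on $\mathcal{T}$; since it kills $\mathcal{S}$ it descends to a triangulated endofunctor of $\mathcal{T}/\mathcal{I}$, which is exactly $(-) \otimes Q(Y)$. The associativity and unit constraints, together with the pentagon and triangle axioms and the naturality/compatibility isomorphisms $e_{X,Y}, \theta_{X,Y}$ relating $\otimes$ with $\Sigma$, are all inherited by applying $Q$ to the corresponding coherence diagrams in $\mathcal{T}$, since $Q$ is monoidal-on-objects and triangulated and these are equalities of morphisms in $\mathcal{T}$ that remain valid after localization. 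Thus $(\mathcal{T}/\mathcal{I}, \otimes, \Sigma, \mathbbm{1})$ satisfies all the axioms of a monoidal triangulated category.

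The only genuine obstacle is the compatibility of $\otimes$ with the multiplicative system $\mathcal{S}$, i.e. the claim that $\mathrm{cone}(s \otimes \id_Y) \in \mathcal{I}$; this is precisely where the hypothesis that $\mathcal{I}$ is a \emph{tensor} ideal (not merely a thick subcategory) is used, via biexactness of $\otimes$. Everything else is a routine transport of structure along the localization functor, and the result is already recorded in \cite[Remark 4.0.6]{pauwels2015quasi}, so I would keep the argument brief and refer there for the verification of the coherence axioms.
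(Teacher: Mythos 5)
Your argument is correct, and it is exactly the standard verification underlying the result: the paper itself gives no proof, only the citation to Pauwels' Remark 4.0.6, whose content is precisely your key observation that $\mathrm{cone}(s\otimes\mathrm{id}_Y)\cong\mathrm{cone}(s)\otimes Y\in\mathcal{I}$ (and symmetrically $\mathrm{cone}(\mathrm{id}_X\otimes s)$, using that the ideal is two-sided) by biexactness, so that $\otimes$ descends to the quotient with coherence transported along the localization functor. So your proposal matches the intended proof; nothing is missing beyond the routine checks you correctly identify as routine.
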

Firstly, Theorem \ref{Ric89_1} can be realized in the case of monoidal derived categories.
\begin{lemma}\label{tensor ideal}
    Let $\mathcal{A}$ be a finite tensor category with enough projective objects. The essential image of the natural embedding $$\textnormal{\textbf{K}}^b(\mathcal{P})\rightarrow \textnormal{\Der}^b(\mathcal{A})$$ is a thick tensor ideal. Moreover, $\textnormal{\Der}^b(\mathcal{A})/\textnormal{\textbf{K}}^b(\mathcal{P})$ is a monoidal triangulated category. 
\end{lemma}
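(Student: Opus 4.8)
The plan is to deduce the thickness of the essential image from Rickard's theorem, to upgrade it to a two-sided tensor ideal using the absorption property of projectives in a tensor category, and then to quote Lemma~\ref{quotient}.

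First I would observe that since $\mathcal{A}$ is a finite tensor category with enough projectives, it is a finite Frobenius abelian category by \cite[Proposition 2.3]{etingof2004finite}, so Theorem~\ref{Ric89_1} applies: the essential image $\mathcal{K}$ of the natural embedding $\textnormal{\textbf{K}}^{b}(\mathcal{P})\rightarrow\textnormal{\Der}^{b}(\mathcal{A})$ is a thick subcategory of $\textnormal{\Der}^{b}(\mathcal{A})$. By construction $\mathcal{K}$ is closed under isomorphism in $\textnormal{\Der}^{b}(\mathcal{A})$ and consists precisely of those objects that are isomorphic, in $\textnormal{\Der}^{b}(\mathcal{A})$, to a bounded complex all of whose terms lie in $\mathcal{P}$.

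Next I would check that $\mathcal{K}$ is a two-sided tensor ideal. Let $K\in\mathcal{K}$ and $X^{\bullet}\in\textnormal{\Der}^{b}(\mathcal{A})$, and choose a bounded complex of projectives $P^{\bullet}$ with $K\cong P^{\bullet}$. Since $\widetilde{\otimes}$ is a bifunctor, $K\,\widetilde{\otimes}\,X^{\bullet}\cong P^{\bullet}\,\widetilde{\otimes}\,X^{\bullet}$, and by Example~\ref{monoidal tri cat1} the $n$-th term of $P^{\bullet}\,\widetilde{\otimes}\,X^{\bullet}$ is the finite coproduct $\coprod_{p+q=n}P^{p}\otimes X^{q}$. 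By \cite[Corollary 2, p.441]{kazhdan1994tensor}, the tensor product of a projective object with an arbitrary object of a finite tensor category is again projective, on either side; hence each $P^{p}\otimes X^{q}$ is projective, a finite coproduct of projectives is projective, and $P^{\bullet}\,\widetilde{\otimes}\,X^{\bullet}$ is bounded because both factors are. So $P^{\bullet}\,\widetilde{\otimes}\,X^{\bullet}$ is a bounded complex of projectives, hence lies in $\mathcal{K}$, and therefore so does $K\,\widetilde{\otimes}\,X^{\bullet}$ since $\mathcal{K}$ is closed under isomorphism. The identical argument with the two factors interchanged gives $X^{\bullet}\,\widetilde{\otimes}\,K\in\mathcal{K}$. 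Combined with thickness this shows $\mathcal{K}$ is a thick tensor ideal of $\textnormal{\Der}^{b}(\mathcal{A})$.

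Finally, $\textnormal{\Der}^{b}(\mathcal{A})$ is a monoidal triangulated category by Example~\ref{monoidal tri cat1}, and $\mathcal{K}$ is a thick tensor ideal of it, so Lemma~\ref{quotient} yields that the Verdier quotient $\textnormal{\Der}^{b}(\mathcal{A})/\textnormal{\textbf{K}}^{b}(\mathcal{P})$ is a monoidal triangulated category, which completes the argument. The proof is mostly an assembly of cited results; the only subtle points, and where I expect a reader might want details spelled out, are that ``essential image'' must be read up to isomorphism in $\textnormal{\Der}^{b}(\mathcal{A})$ so that the termwise computation above is legitimate, and that the absorption property of projectives is invoked in the two-sided form available for finite tensor categories.
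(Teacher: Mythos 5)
Your argument is correct, and its overall skeleton matches the paper's: thickness of the essential image is quoted from Theorem \ref{Ric89_1} (via the Frobenius property from \cite[Proposition 2.3]{etingof2004finite}), and the monoidal structure on the Verdier quotient comes from Lemma \ref{quotient}. Where you diverge is in the key step, the verification that the essential image is a two-sided tensor ideal: the paper dispatches this in one line by appealing to the K\"unneth formula (Theorem \ref{K.lemma}) to assert that $Q^{\bullet}\widetilde{\otimes}P^{\bullet}$ is quasi-isomorphic to a bounded complex of projectives, whereas you argue termwise, replacing an object of the essential image by an honest bounded complex of projectives and using the absorption property that $P\otimes X$ and $X\otimes P$ are projective whenever $P$ is (\cite[Corollary 2, p.441]{kazhdan1994tensor}, the same fact the paper records at the start of the subsection), so that the total complex of Example \ref{monoidal tri cat1} is again a bounded complex of projectives. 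Your route is more elementary and arguably more transparent: the K\"unneth theorem only computes the cohomology of the tensor product and does not by itself produce a projective representative, so the paper's citation leaves exactly the termwise absorption argument implicit, while you spell it out, including the two small points (closure of the essential image under isomorphism, and two-sidedness of the absorption property) that make the argument complete.
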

\begin{proof}
    Let $\mathcal{I}$ be the essential image of the natural embedding. We already know that $\mathcal{I}$ is a thick subcategory of $\Der^{b}(\mathcal{A})$ by Theorem \ref{Ric89_1}. The only thing we need to verify is that $\mathcal{I}$ is a tensor ideal. Let $P^{\bullet}\in \mathcal{I}$ and $Q^{\bullet}\in \Der^{b}(\mathcal{A})$. According to Theorem \ref{K.lemma}, $Q^{\bullet}\widetilde{\otimes} P^{\bullet}$ is quasi-isomorphic to a complex $G^{\bullet}\in \textbf{K}^b(\mathcal{P})$, hence $P^{\bullet}\widetilde{\otimes} Q^{\bullet} \in \mathcal{I}$. Similarly, we have $Q^{\bullet}\widetilde{\otimes} P^{\bullet} \in \mathcal{I}$. Applying Lemma \ref{quotient} completes the proof.  
\end{proof}

The definition of a monoidal functor implies the following result.
\begin{lemma}\label{composite monoidal}
Let $\mathcal{M}$, $\mathcal{N}$ and $\mathcal{C}$ be monoidal categories. Consider the following commutative diagram
$$\begin{tikzcd}
{\mathcal{M}} && {\mathcal{C}} \\
& {\mathcal{N}}
\arrow["F", from=1-1, to=1-3]
\arrow["G"', from=1-1, to=2-2]
\arrow["H"', from=2-2, to=1-3]
\end{tikzcd}$$
where $F$, $G$ are monoidal functors and $H$ is any functor. If $G$ is dense, then $H$ is also a monoidal functor.
\end{lemma}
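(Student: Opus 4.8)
The plan is a transport-of-structure argument. Recall that \emph{dense} here means essentially surjective, so every object of $\mathcal{N}$ is isomorphic to $G(M)$ for some $M\in\mathcal{M}$. Denote the structure isomorphisms of the monoidal functors $F$ and $G$ by
\[
J^{F}_{X,Y}\colon F(X)\otimes F(Y)\xrightarrow{\sim}F(X\otimes Y),\qquad \iota^{F}\colon\mathbbm{1}_{\mathcal{C}}\xrightarrow{\sim}F(\mathbbm{1}_{\mathcal{M}}),
\]
and similarly $J^{G}_{X,Y},\iota^{G}$, and recall $H\circ G=F$ (strictly). Since $H$ is a functor, $H(J^{G}_{X,Y})\colon H\big(G(X)\otimes G(Y)\big)\xrightarrow{\sim}H\big(G(X\otimes Y)\big)=F(X\otimes Y)$ is an isomorphism. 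Using $HG(X)=F(X)$, I would therefore define, on objects in the image of $G$,
\[
J^{H}_{G(X),G(Y)}:=H(J^{G}_{X,Y})^{-1}\circ J^{F}_{X,Y}\colon H(G(X))\otimes H(G(Y))\xrightarrow{\sim}H\big(G(X)\otimes G(Y)\big),
\]
and $\iota^{H}:=H(\iota^{G})^{-1}\circ\iota^{F}\colon\mathbbm{1}_{\mathcal{C}}\to H(\mathbbm{1}_{\mathcal{N}})$, where $\iota^{G}\colon\mathbbm{1}_{\mathcal{N}}\xrightarrow{\sim}G(\mathbbm{1}_{\mathcal{M}})$. This is the only possible choice: it is exactly what the requirement ``the composite of the monoidal structures of $G$ and $H$ recovers that of $F$'' forces.

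Next I would propagate $J^{H}$ to all of $\mathcal{N}$. For each $P\in\mathcal{N}$ fix, using density, an object $X_{P}\in\mathcal{M}$ and an isomorphism $a_{P}\colon P\xrightarrow{\sim}G(X_{P})$, taking $a_{P}=\mathrm{id}$ whenever $P$ already lies in the image of $G$; then set
\[
J^{H}_{P,Q}:=H\big((a_{P}\otimes a_{Q})^{-1}\big)\circ J^{H}_{G(X_{P}),G(X_{Q})}\circ\big(H(a_{P})\otimes H(a_{Q})\big).
\]
It then remains to verify: (i) $(J^{H}_{P,Q})_{P,Q}$ is natural in $P$ and $Q$ and independent of the auxiliary isomorphisms $a_{P}$; (ii) the associativity pentagon for $(H,J^{H})$, involving the associativity constraints of $\mathcal{N}$ and $\mathcal{C}$, commutes; (iii) the two unit triangles for $(H,J^{H},\iota^{H})$ commute. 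For (i), naturality with respect to a morphism of the form $G(x)\colon G(X)\to G(X')$ is immediate after unwinding the definition, since it turns---via $HG=F$ and the naturality of $J^{F}$ and $J^{G}$---into an identity already holding for $F$; the general case and the independence statement follow from this together with density. For (ii) and (iii): on objects of the form $G(X),G(Y),G(Z)$, the pentagon and triangles for $H$, after substituting the definition of $J^{H}$ and cancelling the invertible maps $H(J^{G}_{-,-})$, reduce---using the compatibility of the associators and unitors with $F$ and $G$---to the pentagon and triangles already known for $F$ and $G$; since every triple of objects of $\mathcal{N}$ is, up to the isomorphisms $a_{P}$, of this form, naturality of $J^{H}$ transports these identities everywhere. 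Finally $\iota^{H}$ is an isomorphism $\mathbbm{1}_{\mathcal{C}}\xrightarrow{\sim}H(\mathbbm{1}_{\mathcal{N}})$, so $H$ sends the unit to the unit.

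The conceptual content is minimal: a strong monoidal $F$ that factors as $H\circ G$ through a dense monoidal $G$ forces a canonical monoidal structure on $H$, and the work is purely organisational. Accordingly, I expect the main obstacle to be item (i): checking that the family $J^{H}$, once transported along the chosen isomorphisms $a_{P}$, is genuinely natural. This is the one place where one must combine naturality of $J^{F}$, naturality of $J^{G}$, functoriality of $H$, and the strict equality $HG=F$ in a single diagram chase; once that is in hand, items (ii) and (iii) are mechanical consequences of the coherence theorem for monoidal categories together with the hypothesis that $F$ and $G$ are monoidal.
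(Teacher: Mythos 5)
The decisive step is exactly the one you defer, item (i), and it cannot be closed from the stated hypotheses: density of $G$ is a condition on \emph{objects} only, while naturality of $J^{H}$ is a condition on all \emph{morphisms} of $\mathcal{N}$. Your verification only treats morphisms of the form $G(x)$; since $G$ is not assumed full, a general morphism $P\to P'$ in $\mathcal{N}$ need not be built from images of morphisms of $\mathcal{M}$ and the chosen isomorphisms $a_{P}$, and nothing in the data of $F$ and $G$ constrains $H$ on such morphisms, so ``the general case follows from this together with density'' is not a proof. The same problem hits well-definedness: if $G(X)=G(X')$ and $G(Y)=G(Y')$ with no morphisms of $\mathcal{M}$ relating the two presentations, you cannot show that $H(J^{G}_{X,Y})^{-1}\circ J^{F}_{X,Y}$ is independent of the choice.

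Moreover, this is not a repairable omission in your argument: with $G$ merely dense the statement itself can fail. Sketch: let $\mathcal{N}$ be the free strict monoidal category on one object $x$ and one endomorphism $f$ (so $\mathrm{End}(x^{\otimes n})$ is the free commutative monoid on $f$ acting in each of the $n$ strands), let $\mathcal{M}\subseteq\mathcal{N}$ have the same objects but only identity morphisms, and let $G$ be the inclusion, a strict monoidal functor that is bijective on objects but not full. Define $H\colon\mathcal{N}\to \mods\-\k$ by $H(x^{\otimes n})=\k$ and by sending ``$f$ in the $i$-th strand of $x^{\otimes n}$'' to a scalar $c_{n,i}$; any choice of scalars gives a functor, and $F=HG$ (the constant functor at $\k$ on a discrete monoidal category) is monoidal. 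But naturality of any candidate $J^{H}_{x,x}$ against $f\otimes\mathrm{id}_{x}$ forces $c_{2,1}=c_{1,1}$, so choosing $c_{2,1}\neq c_{1,1}$ yields an $H$ admitting no monoidal structure at all. What saves the lemma in the situations where it is used (the paper itself states it without proof) is that there $G$ is a quotient functor with extra reach on morphisms: the stable-category quotient $\mathcal{A}\to\underline{\mathcal{A}}$ is full and dense, and for a Verdier quotient every morphism is a roof $s^{-1}g$ with $g,s$ images of morphisms from upstairs, so naturality for all morphisms of $\mathcal{N}$ does reduce to naturality on images of morphisms of $\mathcal{M}$ and inverses of such --- which is precisely what your step (i) actually establishes. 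So your transport argument becomes correct once ``dense'' is strengthened to ``full and dense'' (or ``a localization whose morphisms are generated by images and their inverses''), but as written both your proof and the bare statement have a genuine gap at this point.
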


\begin{proposition}\label{Verdier tensor equi to stable}
    Let $\mathcal{A}$ be a finite tensor category with enough projective objects. The category $\textnormal{\Der}^b(\mathcal{A})/\textnormal{\textbf{K}}^b(\mathcal{P})$ is equivalent as a monoidal triangulated category to the monoidal stable category $\underline{\mathcal{A}}$.
\end{proposition}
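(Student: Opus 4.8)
The strategy is to promote Rickard's triangulated equivalence (Theorem \ref{Ric89_1}, already realized at the level of monoidal triangulated categories in Lemma \ref{tensor ideal}) to a monoidal equivalence by feeding a suitable commutative triangle into Lemma \ref{composite monoidal}; the point is that every natural functor involved is monoidal, and one of them is dense.

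First I would fix three functors. Let $\iota_{\mathcal{A}}\colon\mathcal{A}\hookrightarrow\Der^{b}(\mathcal{A})$ be the embedding as stalk complexes concentrated in degree $0$; by the formula for $\widetilde{\otimes}$ in Example \ref{monoidal tri cat1} one has $(\iota_{\mathcal{A}}X)\widetilde{\otimes}(\iota_{\mathcal{A}}Y)\cong\iota_{\mathcal{A}}(X\otimes Y)$ and $\iota_{\mathcal{A}}(\mathbbm{1})=\mathbbm{1}^{\bullet}$, so $\iota_{\mathcal{A}}$ is a monoidal functor. Let $Q\colon\Der^{b}(\mathcal{A})\rightarrow\Der^{b}(\mathcal{A})/\textbf{K}^{b}(\mathcal{P})$ be the Verdier localization; by Lemma \ref{tensor ideal} the subcategory $\textbf{K}^{b}(\mathcal{P})$ is a thick tensor ideal, hence by Lemma \ref{quotient} the quotient carries a monoidal triangulated structure for which $Q$ is a monoidal triangulated functor. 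Finally let $\pi\colon\mathcal{A}\rightarrow\underline{\mathcal{A}}$ be the canonical projection onto the monoidal stable category; by the very construction of the monoidal structure on $\underline{\mathcal{A}}$ (tensoring with projectives stays projective, so $\underline{X}\otimes\underline{Y}:=\underline{X\otimes Y}$ is well defined) the functor $\pi$ is strictly monoidal, and since it is the identity on objects it is dense.

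Next I would invoke the construction underlying Theorem \ref{Ric89_1}: as $Q\circ\iota_{\mathcal{A}}$ annihilates projective objects, it factors, up to natural isomorphism, through $\pi$, giving a functor $R\colon\underline{\mathcal{A}}\rightarrow\Der^{b}(\mathcal{A})/\textbf{K}^{b}(\mathcal{P})$ with $R\circ\pi\cong Q\circ\iota_{\mathcal{A}}$, and this $R$ is precisely the triangulated equivalence of Theorem \ref{Ric89_1}. We thus obtain a commutative triangle
$$\begin{tikzcd}
\mathcal{A} && \Der^{b}(\mathcal{A})/\textbf{K}^{b}(\mathcal{P}) \\
& \underline{\mathcal{A}}
\arrow["{Q\circ\iota_{\mathcal{A}}}", from=1-1, to=1-3]
\arrow["\pi"', from=1-1, to=2-2]
\arrow["R"', from=2-2, to=1-3]
\end{tikzcd}$$
in which $Q\circ\iota_{\mathcal{A}}$ is monoidal (a composite of monoidal functors) and $\pi$ is monoidal and dense. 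Lemma \ref{composite monoidal} then forces $R$ to be a monoidal functor. Being simultaneously a triangulated equivalence, $R$ is a monoidal triangulated equivalence, and hence so is any quasi-inverse $\Der^{b}(\mathcal{A})/\textbf{K}^{b}(\mathcal{P})\rightarrow\underline{\mathcal{A}}$, since the quasi-inverse of a strong monoidal equivalence is canonically strong monoidal. This yields the asserted equivalence of monoidal triangulated categories.

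The only step carrying genuine content is the identification of Rickard's equivalence with the induced functor $R$, i.e. the compatibility of the equivalence in Theorem \ref{Ref:Ric89_1} with the canonical functors out of $\mathcal{A}$; but this is exactly how that equivalence is built in \cite{rickard1989derived}, so it is a matter of quoting the construction rather than proving something new. All the remaining verifications — monoidality of $\iota_{\mathcal{A}}$, of $Q$ (via Lemmas \ref{tensor ideal} and \ref{quotient}) and of $\pi$, together with the density of $\pi$ — are immediate.
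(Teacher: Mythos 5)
Your proposal is correct and follows essentially the same route as the paper: both factor the composite of the stalk-complex embedding and the Verdier functor (monoidal by Lemma \ref{tensor ideal} and Lemma \ref{quotient}) through the dense projection $\mathcal{A}\rightarrow\underline{\mathcal{A}}$ and then apply Lemma \ref{composite monoidal} to conclude that Rickard's equivalence is monoidal. Your write-up merely makes explicit a few points the paper leaves implicit (density of $\pi$, monoidality of the quasi-inverse), which is fine.
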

\begin{proof}
    
By Theorem \ref{Ric89_1} and Lemma \ref{tensor ideal}, it remains to prove that the equivalence  $$F:\underline{\mathcal{A}}\rightarrow \Der^b(\mathcal{A})/\textbf{K}^b(\mathcal{P})$$ in the proof of Theorem \ref{Ric89_1} is also a monoidal equivalence. Recall that $F$ is given by the following diagram:
$$
\begin{tikzcd}
F':\;{\mathcal{A}} \arrow[r, tail] \arrow[rd] & {\Der^b(\mathcal{A})} \arrow[r, two heads] & {\Der^b(\mathcal{A})/\textbf{K}^b(\mathcal{P})} \\
                              & {\underline{\mathcal{A}}} \arrow[ru,"F"']           &   
\end{tikzcd}
$$
where $F'$ is obtained by composing the natural embedding of $\mathcal{A}$ into $\Der^b(\mathcal{A})$ with the Verdier functor. Since the embedding functor and the Verdier functor are monoidal functors, $F'$ is a monoidal functor. By Lemma \ref{composite monoidal}, $F$ is also a monoidal functor and we finish the proof.
\end{proof}

\begin{remark}\label{fact2}
Let $F: \mathcal{T}\rightarrow\mathcal{T}'$ be a monoidal triangulated functor between two monoidal triangulated categories $\mathcal{T}$ and $\mathcal{T}'$. Suppose that $\mathcal{I}$, $\mathcal{I}'$ are thick tensor ideals of $\mathcal{T}$, $\mathcal{T}'$ respectively such that $F(\mathcal{I})\subset \mathcal{I}'$. Hence there exists a triangulated functor such that the following diagram commutes. 

\[\begin{tikzcd}[ampersand replacement=\&]
	\mathcal{T} \&\& {\mathcal{T}'} \\
	{\mathcal{T}/\mathcal{I}} \&\& {\mathcal{T}'/\mathcal{I}'}
	\arrow["F", from=1-1, to=1-3]
	\arrow[from=1-1, to=2-1]
	\arrow[from=1-3, to=2-3]
	\arrow["{\underline{F}}"', from=2-1, to=2-3]
\end{tikzcd}\]
By Lemma \ref{composite monoidal}, $\underline{F}$ is also a monoidal functor. If, moreover, $F$ is an equivalence and $F(\mathcal{I})\simeq \mathcal{I}'$, then $\underline{F}$ is also an equivalence.
\end{remark}

Using Proposition \ref{Verdier tensor equi to stable} and Remark \ref{fact2} or directly as a corollary of Theorem \ref{4 eq}, we obtain the monoidal triangulated categories' version of \cite[Corollary 2.2]{rickard1989derived}.
\begin{corollary}\label{result4}  
Let $\mathcal{A}$ and $\mathcal{A}'$ be finite tensor categories with enough projective objects. If $\textnormal{\Der}^b(\mathcal{A})$ and $ \textnormal{\Der}^b(\mathcal{A}')$ are equivalent as monoidal triangulated categories, then $$\underline{\mathcal{A}}\simeq\underline{\mathcal{A}'}$$ as monoidal triangulated categories.
\end{corollary}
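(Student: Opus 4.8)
The plan is to deduce Corollary \ref{result4} as an immediate consequence of the machinery already assembled, using two independent routes that both work. The cleanest route is to invoke Theorem \ref{4 eq} directly: a finite tensor category $\mathcal{C}$ with enough projectives equals $\textnormal{\fp}(\mathcal{A})$ for the locally finite tensor Grothendieck category $\mathcal{A}:=\textnormal{Ind}(\mathcal{C})$ (or more concretely, taking $\mathcal{A}$ to be the category of left modules over the coalgebra associated to $\mathcal{C}$ by reconstruction), so that $\textnormal{\fp}(\mathcal{A})\simeq\mathcal{C}$ as tensor categories. Then a monoidal triangulated equivalence $\textnormal{\Der}^b(\mathcal{A})\simeq\textnormal{\Der}^b(\mathcal{A}')$, which is exactly $\textnormal{\Der}^b(\textnormal{\fp}(\mathcal{A}))\simeq\textnormal{\Der}^b(\textnormal{\fp}(\mathcal{A}'))$ here, forces $\mathcal{A}\simeq\mathcal{B}$ monoidally abelian by Theorem \ref{4 eq}, hence $\mathcal{C}=\textnormal{\fp}(\mathcal{A})\simeq\textnormal{\fp}(\mathcal{B})=\mathcal{C}'$ as tensor categories. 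Restricting such a monoidal abelian equivalence to projective objects (it preserves projectives, being an equivalence) and passing to stable categories yields $\underline{\mathcal{C}}\simeq\underline{\mathcal{C}'}$ as monoidal triangulated categories, since the stable category construction is functorial and monoidal.

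The second route, which I would actually present since it is self-contained within this subsection and parallels Rickard's original argument, goes through Proposition \ref{Verdier tensor equi to stable} and Remark \ref{fact2}. First I would record that a monoidal triangulated equivalence $F\colon\textnormal{\Der}^b(\mathcal{A})\to\textnormal{\Der}^b(\mathcal{A}')$ sends the thick tensor ideal $\textnormal{\textbf{K}}^b(\mathcal{P})$ onto $\textnormal{\textbf{K}}^b(\mathcal{P}')$: indeed $F$ restricts to a triangulated equivalence that must match perfect complexes with perfect complexes, because $\textnormal{\textbf{K}}^b(\mathcal{P})$ is characterized intrinsically inside $\textnormal{\Der}^b(\mathcal{A})$ (e.g. as the compact objects, or as objects admitting finite projective resolutions), a characterization preserved by any triangulated equivalence. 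Hence $F(\textnormal{\textbf{K}}^b(\mathcal{P}))\simeq\textnormal{\textbf{K}}^b(\mathcal{P}')$. By Remark \ref{fact2}, $F$ descends to a monoidal triangulated equivalence
$$\underline{F}\colon \textnormal{\Der}^b(\mathcal{A})/\textnormal{\textbf{K}}^b(\mathcal{P})\;\longrightarrow\;\textnormal{\Der}^b(\mathcal{A}')/\textnormal{\textbf{K}}^b(\mathcal{P}').$$
Composing with the monoidal triangulated equivalences of Proposition \ref{Verdier tensor equi to stable} on both sides gives $\underline{\mathcal{A}}\simeq\underline{\mathcal{A}'}$ as monoidal triangulated categories, which is the claim.

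The steps, in order: (1) observe $\textnormal{\textbf{K}}^b(\mathcal{P})$ is intrinsically characterized in $\textnormal{\Der}^b(\mathcal{A})$, so a triangulated equivalence carries it to $\textnormal{\textbf{K}}^b(\mathcal{P}')$; (2) apply Remark \ref{fact2} to get the induced monoidal triangulated equivalence $\underline{F}$ on Verdier quotients; (3) apply Proposition \ref{Verdier tensor equi to stable} twice to identify these quotients with $\underline{\mathcal{A}}$ and $\underline{\mathcal{A}'}$ monoidally; (4) conclude. The only real point requiring care is step (1): one must argue that the essential image of $\textnormal{\textbf{K}}^b(\mathcal{P})$ is preserved by an arbitrary triangulated equivalence, not merely by one arising from an abelian equivalence. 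For finite tensor categories with enough projectives this is standard — $\textnormal{\Der}^b(\mathcal{A})$ is the derived category of a finite-dimensional algebra, and $\textnormal{\textbf{K}}^b(\mathcal{P})$ is its subcategory of compact objects (equivalently, the thick subcategory generated by the unit together with all projectives, or the objects of finite projective dimension), all of which are invariant under triangulated equivalence. Alternatively, one sidesteps step (1) entirely by citing Theorem \ref{4 eq} as in the first route, obtaining a genuine monoidal abelian equivalence $\textnormal{\fp}(\mathcal{A})\simeq\textnormal{\fp}(\mathcal{B})$ first and only then passing to stable categories; I expect the final text to take this shorter path, remarking that it is also a direct corollary of Theorem \ref{4 eq}.
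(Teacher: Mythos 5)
Your two routes are exactly the two the paper itself indicates (it proves the corollary in one sentence, citing either Proposition \ref{Verdier tensor equi to stable} together with Remark \ref{fact2}, or Theorem \ref{4 eq} directly), and your first route is sound; in fact your Ind-completion remark supplies a bridge the paper leaves implicit, since Theorem \ref{4 eq} is stated for locally finite tensor Grothendieck categories rather than for bare finite tensor categories.

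The weak point is step (1) of the route you say you would actually present. The justification that $\textnormal{\textbf{K}}^b(\mathcal{P})$ is ``the subcategory of compact objects of $\textnormal{\Der}^b(\mathcal{A})$'' is not correct: $\textnormal{\Der}^b(\mathcal{A})$ has only finite coproducts, so every object is compact there; perfect complexes are the compacts of the \emph{unbounded} derived category of the big module category, which is not the category you are given an equivalence of. Likewise ``objects admitting finite projective resolutions'' refers to projectives of a chosen heart and is not a priori invariant under an arbitrary triangulated equivalence. The characterization that does work is homological finiteness: $X\in\textnormal{\textbf{K}}^b(\mathcal{P})$ if and only if $\Hom_{\textnormal{\Der}^b(\mathcal{A})}(X,\Sigma^nY)=0$ for all $Y$ and all $n\gg 0$; with that replacement your argument goes through. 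Alternatively, and more in the spirit of the paper's toolkit, you can avoid intrinsic characterizations altogether: since $F$ is \emph{monoidal}, Lemma \ref{b.on.f.d} and Corollary \ref{unique u.t.e cor} (with Proposition \ref{trivial ideal} providing tensor reducedness) show that $F$ restricts to a monoidal abelian equivalence between the standard hearts, hence sends projective objects of $\mathcal{A}$ to projective objects of $\mathcal{A}'$; as every bounded complex of projectives is a finite iterated extension of shifts of projective stalk complexes, $F(\textnormal{\textbf{K}}^b(\mathcal{P}))\subseteq \textnormal{\textbf{K}}^b(\mathcal{P}')$, and the quasi-inverse gives the reverse inclusion, after which Remark \ref{fact2} and Proposition \ref{Verdier tensor equi to stable} conclude exactly as you wrote. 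Note that this fix uses monoidality, so it does not prove the stronger claim (true for algebras by Rickard, but not needed here) that an arbitrary triangulated equivalence matches the perfect complexes.
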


\begin{remark}
We consider a special case of Corollary \ref{result4}. Let $H$ and $H'$ be finite-dimensional Hopf algebras. If $\textnormal{\Der}^b(\mods\-H)$ and $\textnormal{\Der}^b(\mods\-H')$ are equivalent as monoidal triangulated categories, then $$\underline{\mods}\-H \simeq \underline{\mods}\-H'$$ as monoidal triangulated categories.
\end{remark}

\begin{example}
We end this section by pointing out that there are Hopf algebras being only stable equivalent but not monoidal stable equivalent. Consider the $n^2$-dimensional Taft algebras $H_{n}(q_1)$ and $H_{n}(q_2)$, where $q_1$ and $q_2$ are primitive $n$-th roots of unity (see definition in \cite[Chapter 7.3]{radford2012hopf}). $H_{n}(q_1)$ and $H_{n}(q_2)$ are gauge equivalent if and only if $q_1=q_2$ \cite[Corollary 3.3]{kashina2012trace}. As $H_{n}(q_1)$ and $H_{n}(q_2)$ are isomorphic as algebras, they are Morita equivalent inducing a stable equivalence from $\underline{\mods}\-H_{n}(q_1)$ to $\underline{\mods}\-H_{n}(q_2)$. According to \cite[Corollary 3.2.13]{xu2024morita}, this stable equivalence can not be a monoidal stable equivalence if $q_1\neq q_2$.
\end{example}

\section*{Acknowledgements} 
The authors are deeply grateful to Steffen K\"onig and Frederik Marks for inspiring discussions and comments. They also would like to thank Gongxiang Liu for providing research topics and useful remarks, thank Simone Virili for helpful conversations on locally coherent Grothendieck categories, and thank Jorge Vit\'oria for valuable discussions on  Lemma \ref{bounded in big}.

\phantomsection
\bibliographystyle{plain}
\bibliography{refs.bib}
\end{document}